\documentclass[a4paper,10pt]{amsart}
\usepackage[utf8]{inputenc}
\usepackage[T1]{fontenc}
\usepackage{amsmath}
\usepackage{amsfonts}
\usepackage{amssymb}
\usepackage{amsthm}
\usepackage{mathrsfs}

\usepackage{graphicx}

\usepackage[all]{xy}
\usepackage{tikz}
\usetikzlibrary{matrix,arrows,decorations.pathmorphing}
\usepackage{tikz-cd}

\usepackage[mathcal]{euscript}

\usepackage{stmaryrd} 

\DeclareMathOperator{\Hom}{\mathsf{Hom}}

\DeclareMathOperator{\Ext}{\mathsf{Ext}}

\DeclareMathOperator{\Ann}{\mathsf{Ann}}
\DeclareMathOperator{\tr}{\mathsf{tr}}
\DeclareMathOperator{\Ker}{Ker}
\DeclareMathOperator{\Coker}{Coker}
\DeclareMathOperator{\im}{Im}

\DeclareMathOperator{\Hocolim}{Hocolim}

\DeclareMathOperator{\Gen}{\mathsf{Gen}}
\DeclareMathOperator{\Cogen}{\mathsf{Cogen}}
\DeclareMathOperator{\Spec}{\mathsf{Spec}}

\DeclareMathOperator{\Supp}{\mathsf{Supp}}
\DeclareMathOperator{\supp}{\mathsf{supp}}

\newcommand{\Mod}{\mathsf{Mod}}
\renewcommand{\mod}{\mathsf{mod}}

\newcommand{\D}{{\sf{D}}}

\newcommand{\fp}{\mathsf{fp}}
\newcommand{\co}{\mathsf{c}}
\newcommand{\spcl}{\mbox{\larger\larger$\vee$}}
\newcommand{\gncl}{\mbox{\larger\larger$\wedge$}}
\newcommand{\real}{\mathsf{real}}
\newcommand{\realb}{\mathsf{real}^b}
\renewcommand{\paragraph}[1]{\par\textbf{#1}}

\newtheorem{thm}{Theorem}
\newtheorem{lemma}[thm]{Lemma}
\newtheorem{prop}[thm]{Proposition}
\newtheorem{cor}[thm]{Corollary}
\theoremstyle{definition}
\newtheorem{dfn}[thm]{Definition}
\newtheorem{rmk}[thm]{Remark}
\newtheorem{ex}[thm]{Example}
\newtheorem{ntn}[thm]{Notation}
\usepackage{fullpage}
\numberwithin{equation}{section}
\numberwithin{thm}{section}

\newcommand{\Acal}{\ensuremath{\mathcal{A}}}
\newcommand{\Bcal}{\ensuremath{\mathcal{B}}}
\newcommand{\Ccal}{\ensuremath{\mathcal{C}}}
\newcommand{\Dcal}{\ensuremath{\mathcal{D}}}
\newcommand{\Dbb}{\ensuremath{\mathbb{D}}}
\newcommand{\Fcal}{\ensuremath{\mathcal{F}}}
\newcommand{\Gcal}{\ensuremath{\mathcal{G}}}
\newcommand{\Hcal}{\ensuremath{\mathcal{H}}}

\newcommand{\Lcal}{\ensuremath{\mathcal{L}}}

\newcommand{\Pcal}{\ensuremath{\mathcal{P}}}

\newcommand{\Scal}{\ensuremath{\mathcal{S}}}
\newcommand{\Tcal}{\ensuremath{\mathcal{T}}}
\newcommand{\Tbb}{\ensuremath{\mathbb{T}}}
\newcommand{\Ucal}{\ensuremath{\mathcal{U}}}
\newcommand{\Vcal}{\ensuremath{\mathcal{V}}}
\newcommand{\Wcal}{\ensuremath{\mathcal{W}}}
\newcommand{\Xcal}{\ensuremath{\mathcal{X}}}
\newcommand{\Ycal}{\ensuremath{\mathcal{Y}}}
\newcommand{\Zcal}{\ensuremath{\mathcal{Z}}}
\newcommand{\Zbb}{\ensuremath{\mathbb{Z}}}

\begin{document}
\title{Hearts for commutative noetherian rings: \\ torsion pairs and derived equivalences}
\author{Sergio Pavon, Jorge Vit\'oria}
\address{Sergio Pavon, Dipartimento di Matematica "Tullio Levi-Civita", Universit\`a degli Studi di Padova, Via Trieste, 63, 35121 Padova, Italy} 
\email{sergio.pavon@math.unipd.it}
\address{Jorge Vit\'oria, Dipartimento di Matematica e Informatica, Universit\'a degli Studi di Cagliari, Palazzo delle Scienze, Via Ospedale, 72, 09124 Cagliari, Italy}
\email{jorge.vitoria@unica.it}
\urladdr{https://sites.google.com/view/jorgevitoria/}
\thanks{The authors are grateful to Frederik Marks and Alexandra Zvonareva for sharing with them
results of their work in progress \cite{MZ}, and to Lidia Angeleri H\"ugel and Ryo Takahashi for
Lemma \ref{cosygyzy}. The authors acknowledge financial support from the Fondazione Cariverona under the programme \textit{Ricerca Scientifica di Eccellenza 2018}, project: \textit{Reducing complexity in algebra, logic, combinatorics - REDCOM}. The first-named author was also partially supported by the Department of Mathematics Tullio Levi-Civita of the University of Padova (Project BIRD163492/16 \textit{Categorical homological methods in the study of algebraic structures} and the Research programme DOR1828909 \textit{Anelli e categorie di moduli.) }}
\maketitle
\begin{center} {\textit{Dedicated to Lidia Angeleri H\"ugel on the occasion of her 60th birthday}} \end{center}
\begin{abstract}
Over a commutative noetherian ring $R$, the prime spectrum controls, via the assignment of support,
the structure of both $\Mod(R)$ and $\D(R)$. We show that, just like in $\Mod(R)$, the assignment of
support classifies hereditary torsion pairs in the heart of any nondegenerate compactly generated
$t$-structure of $\D(R)$. Moreover, we investigate whether these $t$-structures induce derived
equivalences, obtaining a new source of Grothendieck categories which are derived equivalent to
$\Mod(R)$.
\end{abstract}


\section{Introduction}
Two common techniques to study the category of modules over a ring and its derived counterpart are:
\textit{decompose into simpler parts} and \textit{replace by suitably equivalent category}.
Concerning the former, the notion of a torsion pair is central. Modelled on the concept of torsion
for abelian groups, these are pairs of subcategories that generate, by extensions, the whole
category, while simultaneously being orthogonal under the $\Hom$-pairing
(\cite{D}). This idea extends beyond abelian categories to triangulated categories, giving rise in
particular to the important concepts of $t$-structures and recollements (\cite{BBD}).  Torsion pairs
are closely linked to categorical localisations, another useful tool in representation theory.
Concerning categorical equivalences, we have at our disposal the theory of Morita, studying equivalences
of module categories, and its derived version, studying triangle equivalences between derived
categories of modules (\cite{R}). In recent years, derived Morita theory has been extended to
include equivalences between derived categories of Grothendieck categories (\cite{PV},\cite{V}).
These include not only categories of modules over a ring but also, for example, categories of
quasi-coherent sheaves over a scheme.

In this paper we look at Grothendieck categories naturally occurring in the derived category of a
commutative noetherian ring as hearts of $t$-structures. We approach the problem of classifying their
localising subcategories (or, equivalently, hereditary torsion classes) and the problem of
determining whether they are derived equivalent to the module category.  In the noncommutative
setting, these questions are, in general, quite difficult even in the case of a finite-dimensional
algebra. In fact, a complete classification of localising
subcategories is not even available for the module category of the Kronecker algebra (\cite{KS}). Understandably, the same
classification problem in the derived category is even more cumbersome. With regards to derived equivalences, there is often a triangle functor linking the bounded derived category of a given heart and that of the ring, and there are criteria to check whether such a functor is an equivalence (see \cite{BBD,CHZ}). However, these criteria are in general not easy to check. 

A remarkable feature of the representation theory of a commutative noetherian ring $R$ is that much
of the structure of the modules category or of its derived counterpart is controlled by the Zariski
spectrum of the ring. In fact, some of the problems mentioned in the above paragraphs have been
elegantly solved in the module category and in the derived category. On one hand, Gabriel classified
localising subcategories of $\mathsf{Mod}(R)$ in terms of specialisation-closed subsets of the
spectrum, and Neeman classified localising subcategories of $\D(R)$ in terms of arbitrary subsets of
$\Spec(R)$. Even compactly generated $t$-structures are completely classified in such derived
categories (\cite{AJS}). On the other hand, it is well-known that if two commutative noetherian
rings are derived equivalent, then they are isomorphic (see, for example, \cite[Proposition
5.3]{PS3} for a stronger statement). Our aim in this paper is to show that the prime spectrum also
controls \textit{localising subcategories} of Grothendieck hearts in the derived category of $R$ and,
moreover, that these hearts are often \textit{derived equivalent} to $\mathsf{Mod}(R)$. In fact, we prove the
following.
\pagebreak

\noindent \textbf{Theorem.} \textit{Let $R$ be a commutative noetherian ring and let $\mathbb{T}$
denote a nondegenerate compactly generated $t$-structure with heart $\Hcal_\Tbb$. Then the following
holds:
\begin{enumerate}
\item (Proposition \ref{some properties} and Theorem \ref{classification of tp in heart}(1)(a)) The
hereditary torsion classes of $\Hcal_\Tbb$ are completely determined by their support in
$\Spec(R)$. If $\Tbb$ is intermediate (Definition
\ref{dfn:t-structure-properties}), every specialisation-closed subset of $\Spec(R)$ is the support of a
hereditary torsion class in $\Hcal_\Tbb$.
\item If $\mathbb{T}$ is intermediate and restricts to $\D^b(\mathsf{mod}(R))$, then 
\begin{enumerate}
\item (Theorem \ref{der eq restrictable}) $\mathbb{T}$ induces a derived equivalence between $\Hcal_\Tbb$ and $\mathsf{Mod}(R)$;
\item (Theorem \ref{classification of tp in heart} and Corollary \ref{tp finite type restrictable})
There is a bijection between hereditary torsion pairs of finite type and specialisation-closed
subsets of $\Spec(R)$.
\end{enumerate}
In particular, the above two assertions hold for the HRS-tilt of the standard $t$-structure at a
hereditary torsion pair in $\Mod(R)$.
\end{enumerate}}

For some hearts, we are able to give a complete description of the supports on hereditary torsion
pairs (see Subsection 4.2). While the results obtained about torsion pairs rely on the
well-developed theory of support, the results on derived equivalences instead rely on the study of
HRS-tilts. Happel, Reiten and Smal{\o} developed in \cite{HRS} a way to create a new $t$-structure
from an old one, provided we are given a torsion pair in the old heart. The properties of this new
$t$-structure depend on the properties of the given torsion pair, and therefore one may say that
studying HRS-tilts often can be reduced to studying the associated torsion pairs. However, HRS-tilts
turn out to be an elementary operation that, when iterated, allows us obtain a large class of
$t$-structures (see, for example, \cite{FMT,MPT}). Moreover, HRS-tilts turn out to play an important
role in understanding Bridgeland's stability condition manifold (see, for example, \cite{Bri},
\cite{BPP} and \cite{PZ}). In \cite{CHZ}, necessary and sufficient conditions for an HRS-tilt to
induce a derived equivalence were studied, and we review this theorem in Section 5. We use this to
prove the following result, that becomes a fundamental tool in our application to commutative rings
in the theorem above.
\vspace{0.2cm}

\noindent \textbf{Theorem.} \textit{(Theorems \ref{criterion Grothendieck} and
\ref{thm:her-criterion} and Corollary \ref{der eq comm ring}) Let $\Gcal$ be a Grothendieck abelian category with generator $G$, and let
$\mathbf{t}=(\Tcal,\Fcal)$ be a hereditary torsion pair in $\Gcal$ with torsion radical $t\colon
\Gcal\to \Tcal$. Then $\mathbf{t}$ induces an equivalence of bounded derived categories if and only
if $G/\mathsf{tr}_{G/t(G)}(G)$ lies in $\Tcal$.  Moreover, if $\Gcal=\mathsf{Mod}(R)$, then
$\mathbf{t}$ induces an equivalence of bounded derived categories if and only if
$R/\mathsf{Ann}({}_Rt(R))$ lies in $\Tcal$. As a consequence, if $R$ is commutative and noetherian, then every hereditary torsion pair induces a (bounded) derived equivalence.} 
\vspace{0.1cm}

The results concerning derived equivalences between Grothendieck hearts and $\mathsf{Mod}(R)$, for
$R$ commutative noetherian, have implications on the level of silting theory. Namely, it allows us
to show that any bounded cosilting complex in $\D(R)$ whose associated $t$-structure restricts to
$\D^b(\mathsf{mod}(R))$ must in fact be a cotilting complex (Corollary \ref{bdd cosilting is
cotilting}). In particular, every two-term cosilting complex is, in this setting, necessarily
cotilting (Corollary \ref{2-term cosilting is cotilting}). These results may easily lead to the
expectation that every bounded cosilting is cotilting. This is, however, not true, as shown in
Example \ref{ex:Z-secondtilt}. 
\vspace{0.2cm}

\noindent\textbf{Structure of the paper.} In Section~2 we recall some definitions and known results about torsion pairs and $t$-structures.
In Section~3 we turn to the definition of support over a commutative noetherian ring and we collect
some of the known classification results for various types of subcategories of $\Mod(R)$ and
$\D(R)$.  In Section~4 we prove that hereditary torsion pairs in the heart of a nondegenerate
compactly generated $t$-structure of $\D(R)$ are determined by their supports  (Proposition
\ref{some properties}). We investigate the sets arising as support of hereditary torsion classes in
such hearts. In Subsection 4.2 we are able to describe these subsets of $\Spec(R)$ for some
specific hearts. In Section~5 we temporarily leave the commutative noetherian setting to address the
question of when a hereditary torsion pair in a Grothendieck category $\mathcal G$ gives rise to a
derived equivalent category via HRS-tilting. We then specialise the results to module categories. In
Section~6 we return to the commutative noetherian case. We observe that the intermediate compactly
generated $t$-structures of $\D(R)$ can be obtained from the standard one via a finite chain of
HRS-tilting operations (Proposition \ref{iterated HRS}), with respect to hereditary torsion pairs of finite type at
each step. Combining the results of Sections~4~and~5 we show that if the intermediate compactly
generated $t$-structure at the end of the chain is restrictable, then across each HRS-tilting step
of the chain there is a derived equivalence (Theorem \ref{der eq restrictable}).  This provides a
class of $t$-structures in $\D(R)$ whose heart is derived equivalent to $\Mod(R)$. In particular, we
conclude that a bounded cosilting objects whose associated $t$-structure is restrictable must be
cotilting (Corollary \ref{bdd cosilting is cotilting}).
\vspace{0.2cm}

\noindent\textbf{Notation and conventions.} All subcategories considered in this paper are strict and full. 
Given a class $\Scal$ of objects of a category $\Xcal$, we denote by $\Gen(\Scal)$ (respectively, $\Cogen(\Scal)$)
the subcategory of $\Xcal$ formed by the epimorphic images of existing coproducts (respectively, subobjects
of existing products) of objects in $\Scal$.
If $\Xcal$ is preadditive (e.g. abelian or triangulated) we write
\[\Scal^{\perp}:=\{Z\in\Xcal\colon \Hom_\Xcal(S,Z)=0 \ \forall S\in\Scal\}\qquad
  {}^{\perp}\Scal:=\{Z\in\Xcal\colon \Hom_\Xcal(Z,S)=0 \ \forall S\in\Scal\} \]
If $\Xcal$ is abelian and $I\subseteq\mathbb{N}_0$ is a set of naturals,
we write
$$\Scal^{\perp_I}:=\{Z\in\Xcal\colon \Ext_\Xcal^k(S,Z)=0, \ \forall k\in I, \forall S\in\Scal\}.$$
If $\Xcal$ is a
triangulated category and $J\subseteq \mathbb{Z}$, we write
\[
\Scal^{\perp_J} := \{ Z\in\Xcal \colon \Hom_\Xcal(S,Z[k])=0,\ \forall k\in J,S\in\Scal \}, \ 
{}^{\perp_J}\Scal := \{ Z\in\Xcal\colon \Hom_\Xcal(Z,S[k])=0, \ \forall k\in J,S\in\Scal \}
\]
Often we replace the subsets $I$ and $J$ above by expressions of the form $>0$, $\geq 0$, $\leq 0$,
$<0$, or simply a list of integers, with the obvious meaning. If $\Xcal$ is
abelian (or triangulated) and $\Ycal$ and $\Zcal$ are subcategories of $\Xcal$,
we denote by $\Ycal\ast\Zcal$ the subcategory of $\Xcal$ formed by the objects
$X$ for which there are $Y$ in $\Ycal$, $Z$ in $\Zcal$ and a short exact
sequence (respectively, a triangle)
\[0\longrightarrow Y\longrightarrow X\longrightarrow Z\longrightarrow 0\]
\[(\text{respectively,}\  Y\longrightarrow X\longrightarrow Z\longrightarrow X[1]).\]
If $R$ is a ring, all
modules considered are right modules. The category of $R$-modules is
denoted by $\Mod(R)$, and its subcategory of finitely presented modules by
$\mod(R)$. We denote by $\D(R)$ the unbounded derived category of $\Mod(R)$ and by $\D^+(R)$ (respectively, $\D^b(R)$) 
the bounded below (respectively, bounded) counterpart. The bounded derived category of
finitely presented $R$-modules is denoted by $\D^b(\mod(R))$.

\section{Preliminaries: torsion pairs and $t$-structures}

Most statements in this section about abelian or triangulated categories hold under more general
assumptions than those presented. For simplicity, we restrict ourselves to the settings of
Grothendieck abelian categories and derived categories of module categories. The generality under
which each result holds can be extracted by the references provided. An abelian
category $\Gcal$ is said to be \textbf{Grothendieck} if it admits arbitrary (set-indexed)
coproducts, direct limits are exact in $\Gcal$, and $\Gcal$ has a generator.
In a triangulated category with arbitrary (set-indexed) coproducts, an object $X$ in $\Dcal$ is said
to be \textbf{compact} if $\Hom_\Dcal(X,-)$ commutes with coproducts, and $\Dcal$ is said to
be \textbf{compactly generated} if the subcategory of compact objects, denoted by $\Dcal^c$, is
skeletally small and $(\Dcal^c)^\perp=0$.
For a ring $R$, $\D(R)$ is compactly generated and $\D(R)^c$ is the subcategory of bounded complexes of finitely generated projective $R$-modules. 

\subsection{Torsion pairs}
Torsion pairs are useful (orthogonal) decompositions of abelian categories.
\begin{dfn}
A pair $(\mathcal T,\mathcal F)$ of subcategories of an abelian category $\mathcal A$ is said to be
a \textbf{torsion pair} if
\begin{enumerate}
\item $\Hom_\mathcal A(T,F)=0$ for any $T$ in $\mathcal T$ and any $F$ in $\mathcal F$.
\item $\mathcal A=\mathcal T \ast \mathcal F$.
\end{enumerate}
If $\mathbf{t}:=(\mathcal T,\mathcal F)$ is a torsion pair in $\mathcal A$, $\mathcal T$ is called its
\textbf{torsion class} and $\mathcal F$ its \textbf{torsionfree class}. The pair $\mathbf{t}$ is said to be
\textbf{hereditary} if $\Tcal$ is closed under
subobjects, and \textbf{cohereditary} if $\Fcal$ is closed under quotient objects. A subcategory
$\Vcal$ of $\Acal$ is called a \textbf{torsion torsionfree class} (TTF class, for short) if
there are torsion pairs $(\Ucal,\Vcal)$ and $(\Vcal,\Wcal)$ in $\Acal$, i.e. if $\Vcal$ is both a
torsion class and a torsionfree class in $\Acal$.
\end{dfn}

A subcategory $\Xcal$ of a Grothendieck category $\Acal$ is a
torsion class if and only if it is closed under coproducts, quotient objects and extensions, and it
is a torsionfree class if and only if it is closed under products, subobjects and extensions (\cite{D}). Since the coproduct of a family of objects in $\Acal$ is a subobject of the product of
that same family, $\Xcal$ is a TTF class if and only if it is a
cohereditary torsionfree class. 

\begin{ex}
Given a Grothendieck abelian category $\mathcal{G}$ and a set of objects $\Xcal$ of $\mathcal{G}$,
it follows from the description above of torsion and torsionfree classes that the pairs
$(\Tcal_\Xcal,\Xcal^\perp)$ and $({}^\perp \Xcal,\Fcal_\Xcal)$ are torsion pairs. The first is said
to be \textbf{generated by $\Xcal$} while the second is said to be \textbf{cogenerated by $\Xcal$}.
\end{ex}

Since a torsion class is closed under coproducts and quotients, it is always closed for direct
limits; this in general is not the case for torsionfree classes.
\begin{dfn}
A torsion pair in a Grothendieck abelian category is said to be \textbf{of finite type} if its
torsionfree class is closed under direct limits.
\end{dfn}

An object $X$ in a cocomplete abelian category $\Acal$ is said to be \textbf{finitely presented} if
$\Hom_\Acal(X,-)$ commutes with direct limits. The subcategory of finitely presented objects
of $\Acal$ will be denoted by $\fp(\Acal)$.  Recall from \cite{CB} that a Grothendieck
category $\Gcal$ is \textbf{locally finitely presented} provided that the subcategory $\fp(\Gcal)$
is skeletally small and every object of $\Gcal$ can be
expressed as the direct limit of a system of finitely presented objects.
Moreover, $\Gcal$ is \textbf{locally coherent} if it is locally finitely presented  and the
subcategory $\fp(\Gcal)$ is an abelian subcategory. For any ring $R$, $\Mod(R)$ is always
locally finitely presented; the ring is said to be \textbf{coherent} precisely when
$\Mod(R)$ is locally coherent. In particular, the category of modules over a noetherian
ring is locally coherent (in the example of the noetherian ring, $\fp(\Mod(R))=\mod(R)$ is the
category of finitely generated modules).

\begin{dfn}
If $\Gcal$ is locally coherent, a torsion pair $\mathbf{t}=(\Tcal,\Fcal)$ in $\Gcal$ is said to be
\textbf{restrictable} if $\mathbf{t}\cap\fp(\Gcal):=(\Tcal\cap\fp(\Gcal),\Fcal\cap\fp(\Gcal))$ is a
torsion pair in $\fp(\Gcal)$. 
\end{dfn}

The following well-known result relates restrictable torsion pairs to those of finite type.

\begin{lemma}\label{CB+H}
Let $\Gcal$ be a locally coherent Grothendieck category, and $\mathbf{t}=(\Tcal,\Fcal)$ a torsion
pair in $\Gcal$.
\begin{enumerate}
\item If $\mathbf{t}$ is generated by a set of finitely presented objects
	$\Scal\subseteq\fp(\Gcal)$, then $\mathbf{t}$ is of finite type. 
\item\cite[Lemma 2.3]{Kr} If $\mathbf{t}$ is a hereditary torsion pair of finite type, then
	$\Tcal=\varinjlim(\Tcal\cap\fp(\Gcal))$.
\item\cite[Lemma 4.4]{CB} If $\mathbf{t}$ is restrictable, then it is of finite type if and only if
	\[\mathbf{t}=\varinjlim(\mathbf{t}\cap\fp(\Gcal)):=
		(\varinjlim(\Tcal\cap\fp(\Gcal)),\varinjlim(\Fcal\cap\fp(\Gcal))).\]
\end{enumerate}
\end{lemma}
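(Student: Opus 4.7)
The plan is to address the three parts of the lemma separately. The key unifying tool, which I would develop first, is that in any Grothendieck category a torsion pair of finite type has a torsion radical that commutes with direct limits.

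For (1), the strategy is to recognise that the torsion pair generated by $\Scal$ has torsionfree class exactly $\Fcal=\Scal^{\perp_0}$: indeed, the smallest torsion class containing $\Scal$ is produced from $\Scal$ by iterated coproducts, quotients and extensions, and Hom-vanishing into a fixed $F$ is stable under each of these operations; conversely, $\Scal^{\perp_0}$ is closed under products, subobjects and extensions, hence is a torsionfree class. With $\Fcal=\Scal^{\perp_0}$ identified, finite presentation of $\Scal$ makes the conclusion immediate: for a direct system $(F_i)$ in $\Fcal$ and $S\in\Scal$, the isomorphism $\Hom_\Gcal(S,\varinjlim F_i)\cong\varinjlim\Hom_\Gcal(S,F_i)=0$ shows that $\varinjlim F_i\in\Fcal$.

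For (3), cited from Crawley-Boevey, I would begin by proving the auxiliary commutation fact: given a direct system $(X_i)$ with canonical torsion sequences $0\to t(X_i)\to X_i\to X_i/t(X_i)\to 0$, exactness of direct limits in $\Gcal$ yields a short exact sequence with $\varinjlim t(X_i)\in\Tcal$ (torsion classes are always closed under direct limits) and $\varinjlim X_i/t(X_i)\in\Fcal$ (by the finite type hypothesis), forcing $t(\varinjlim X_i)=\varinjlim t(X_i)$ by uniqueness of the torsion decomposition. For the ``only if'' direction, take $T\in\Tcal$, present it as $T=\varinjlim X_i$ with $X_i\in\fp(\Gcal)$ via local finite presentability, use restrictability to conclude $t(X_i)\in\Tcal\cap\fp(\Gcal)$, and read off $T=t(T)=\varinjlim t(X_i)$; symmetrically, for $F\in\Fcal$, the identity $t(F)=0$ combined with exactness of $\varinjlim$ gives $F=\varinjlim X_i/t(X_i)$ with $X_i/t(X_i)\in\Fcal\cap\fp(\Gcal)$. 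The ``if'' direction is formal, as the displayed equality already expresses $\Fcal$ as a class of direct limits, which is automatically closed under them by a Fubini-type argument for iterated colimits.

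For (2), cited from Krause and easier to sketch given the locally coherent hypothesis, the plan is: given $T\in\Tcal$, present $T=\varinjlim X_\alpha$ with $X_\alpha\in\fp(\Gcal)$, and replace $X_\alpha$ by its image $Y_\alpha=\im(X_\alpha\to T)$, which lies in $\Tcal$ by heredity (as a subobject of $T$) and in $\fp(\Gcal)$ by local coherence (as a quotient of a finitely presented object, since $\fp(\Gcal)$ is an abelian subcategory). Then $T=\varinjlim Y_\alpha$ realises $T\in\varinjlim(\Tcal\cap\fp(\Gcal))$. The main obstacle I anticipate across the three parts is precisely the auxiliary commutation of the torsion radical with direct limits under the finite type hypothesis, upon which the substantive direction of (3) rests.
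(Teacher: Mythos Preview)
Your arguments for (1) and (3) are correct. For (3) you take a genuinely different route from the paper: rather than invoking Crawley-Boevey's result that $\varinjlim(\mathbf{t}\cap\fp(\Gcal))$ is itself a torsion pair in $\Gcal$ and then comparing it to $\mathbf{t}$ via torsion decompositions (which is what the paper does), you work directly with the commutation of the torsion radical with direct limits and use restrictability to keep $t(X_i)$ finitely presented. Both approaches are short; the paper's has the advantage of not needing the auxiliary commutation lemma, while yours makes the role of restrictability more transparent.

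There is, however, a genuine gap in your sketch for (2). You claim that $Y_\alpha=\im(X_\alpha\to T)$ lies in $\fp(\Gcal)$ ``by local coherence, as a quotient of a finitely presented object, since $\fp(\Gcal)$ is an abelian subcategory''. But $\fp(\Gcal)$ being an abelian subcategory only guarantees that cokernels of morphisms \emph{between} finitely presented objects are finitely presented; it does not say that arbitrary quotients of finitely presented objects are finitely presented. Indeed, over any coherent non-noetherian ring $R$, the module $R/I$ for a non-finitely-generated ideal $I$ is a quotient of the finitely presented $R$ that fails to be finitely presented. Your argument never invokes the finite type hypothesis, which is a red flag: as written, it would prove $\Tcal=\varinjlim(\Tcal\cap\fp(\Gcal))$ for every hereditary torsion pair in a locally coherent category, and that is false. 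One way to repair the strategy is to use finite type to show that for $X\in\fp(\Gcal)$ the torsion subobject $t(X)$ is itself finitely presented (this is where Krause's argument does the work), after which your commutation lemma from (3) yields $T=\varinjlim t(X_\alpha)$ with $t(X_\alpha)\in\Tcal\cap\fp(\Gcal)$.
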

\begin{proof}
We comment on statement (3). If $\Fcal=\varinjlim(\Fcal\cap\fp(\Gcal))$, $\mathbf{t}$ is clearly of finite type.
For the converse, by \cite[\S 4.4]{CB}, since $\mathbf{t}\cap\fp(\Gcal)$ is a torsion pair in
$\fp(\Gcal)$, then  $\varinjlim(\mathbf{t}\cap\fp(\Gcal))$ is a torsion pair in $\Gcal$. Now,
if $\mathbf{t}$ is of finite type, we have both $\varinjlim(\Tcal\cap\fp(\Gcal))\subseteq\Tcal$ and
$\varinjlim(\Fcal\cap\fp(\Gcal))\subseteq\Fcal$; for the converse inclusion, it suffices to
consider the torsion decomposition sequences of objects of $\Tcal$ and $\Fcal$ with respect to
$\varinjlim(\mathbf{t}\cap\fp(\Gcal))$.
\end{proof}

\begin{rmk}
We will prove in Lemma \ref{cor:finite-types} the converse of item (1) when $\Gcal$ is the heart
of a compactly generated $t$-structure in the derived category of a commutative noetherian ring.
\end{rmk}

\begin{rmk}\label{rmk:here-finitetype}
For a right noetherian ring $R$, item (1) of Lemma \ref{CB+H} guarantees that every hereditary
torsion pair is of finite type. Indeed, recall that over any ring a module is the sum of its
finitely generated submodules: hence a hereditary torsion pair is generated by the class of finitely
generated torsion modules. If $R$ is right noetherian, these are automatically also finitely presented. Note, furthermore, that any torsion pair over a right noetherian ring is restrictable.
\end{rmk}

The following lemma, which will be useful later on, states that a torsionfree class in a
Grothendieck category admits a generator, in the sense below. 

\begin{lemma}\label{torsionfree generator}
Let $\Gcal$ be a Grothendieck category with generator $G$, and let
$\mathbf{t}=(\Tcal,\Fcal)$ be a torsion pair in $\Gcal$ with torsion radical $t\colon \Gcal\to
\Tcal$. Then every object in $\Fcal$ is a quotient of a coproduct of copies of $G/t(G)$, i.e.
$\Fcal\subseteq \Gen(G/t(G))$.
\end{lemma}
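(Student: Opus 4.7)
The plan is to exploit the defining property of the generator $G$ combined with the orthogonality between torsion and torsionfree classes.

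First, I would pick an arbitrary $X$ in $\Fcal$. Since $G$ is a generator of $\Gcal$, there is a set $I$ and an epimorphism $p\colon G^{(I)}\twoheadrightarrow X$. The goal then is to factor $p$ through the canonical projection $\pi^{(I)}\colon G^{(I)}\twoheadrightarrow (G/t(G))^{(I)}$, since any such factorisation will automatically be an epimorphism (as $p$ is) and will exhibit $X$ as a quotient of a coproduct of copies of $G/t(G)$.

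To produce this factorisation, I would consider the short exact sequence $0\to t(G)\to G\to G/t(G)\to 0$ and take its $I$-fold coproduct, obtaining
\[ 0\longrightarrow t(G)^{(I)}\longrightarrow G^{(I)}\longrightarrow (G/t(G))^{(I)}\longrightarrow 0, \]
which remains short exact in $\Gcal$ because coproducts are exact in a Grothendieck category. The key observation is that $t(G)^{(I)}$ lies in $\Tcal$, since torsion classes are closed under coproducts. Therefore the composition $t(G)^{(I)}\hookrightarrow G^{(I)}\xrightarrow{p} X$ must be zero, as $\Hom_\Gcal(\Tcal,\Fcal)=0$ and $X\in\Fcal$. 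By the universal property of the cokernel $\pi^{(I)}$, the morphism $p$ factors uniquely through $(G/t(G))^{(I)}$, yielding the desired epimorphism $(G/t(G))^{(I)}\twoheadrightarrow X$.

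There is really no hard step here; the only point requiring minor care is invoking the exactness of coproducts in a Grothendieck category to conclude that $(G/t(G))^{(I)}$ is genuinely the cokernel of the inclusion $t(G)^{(I)}\hookrightarrow G^{(I)}$, so that the universal property applies.
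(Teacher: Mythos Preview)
Your proof is correct and follows essentially the same approach as the paper: take an epimorphism $G^{(I)}\twoheadrightarrow X$, note that $t(G)^{(I)}\in\Tcal$ maps trivially to $X\in\Fcal$, and factor through the cokernel $(G/t(G))^{(I)}$. The only cosmetic difference is that the paper phrases the identification $G^{(I)}/t(G)^{(I)}\cong (G/t(G))^{(I)}$ via the observation that $t(G^{(I)})\cong t(G)^{(I)}$ (using closure of both $\Tcal$ and $\Fcal$ under coproducts), whereas you invoke exactness of coproducts directly.
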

\begin{proof}
Let $X$ be an object of $\Fcal$ and let $f\colon G^{(I)}\to X$ be an epimorphism. Since both $\Tcal$ and
$\Fcal$ are closed under coproducts, it is easy to see that $t(G^{(I)})\cong t(G)^{(I)}$ and that
$G^{(I)}/t(G^{(I)})\cong (G/t(G))^{(I)}$. Hence, since $\Hom_\Gcal(t(G^{(I)}),X)=0$, the morphism
$f$ factors through an epimorphism $\overline{f}\colon (G/t(G))^{(I)}\longrightarrow X$.
\end{proof}

\subsection{$t$-structures}
The role of $t$-structures in triangulated categories, as first defined in \cite{BBD}, is analogous to
that of torsion pairs in abelian categories. Recall that, as set up in the
Introduction, given subcategories $\Ucal$ and $\Vcal$ in a triangulated category
$\Tcal$, $\Ucal\ast \Vcal$ stands for the subcategory given by extensions of
$\Ucal$ by $\Vcal$.

\begin{dfn}\label{t-def}
A pair of subcategories $\Tbb:=(\Ucal,\Vcal)$ in a triangulated category $\Dcal$ is a
\textbf{$t$-structure} if
\begin{enumerate}
\item $\Hom_\Dcal(\Ucal,\Vcal)=0$;
\item $\Ucal[1]\subseteq \Ucal$;
\item $\Ucal\ast\Vcal=\Dcal$.
\end{enumerate}
In that case, we call $\Ucal$ the \textbf{aisle}, $\Vcal$ the \textbf{coaisle} and
$\Hcal_\Tbb=\Vcal[1]\cap \Ucal$ the \textbf{heart} of $\Tbb$. The $t$-structure
$\Tbb$ is said to be \textbf{nondegenerate} if
$\cap_{n\in\mathbb{Z}}\Ucal[n]=0=\cap_{n\in\mathbb{Z}}\Vcal[n]$ and \textbf{bounded} if
$\cup_{n\in\mathbb{Z}}\Ucal[n]=\Dcal=\cup_{n\in\mathbb{Z}}\Vcal[n]$.
\end{dfn}

It is well-known from \cite{BBD} that the heart of a $t$-structure $\Tbb$ in $\Dcal$ is an
abelian category and that there is an associated cohomological functor $H^0_\Tbb\colon
\Dcal\to \Hcal_\Tbb$ which restricts to the identity functor in $\Hcal_\Tbb$.

\begin{ex}
For a ring $R$, let $H^0\colon \D(R)\longrightarrow \mathsf{Mod}(R)$ denote the standard cohomology
functor of the derived category of $R$. This
functor arises from a $t$-structure, also called \textbf{standard}. This is the pair
$\Dbb=(\Dbb^{\leq 0},\Dbb^{\geq 1})$ where each of the subcategories of $\D(R)$ in the pair
are made of the complexes whose non-zero cohomologies are concentrated in the degrees indicated in
superscript. Note that shifts of $t$-structures are also $t$-structures; we write
$\Dbb^{\leq k}=\Dbb^{\leq 0}[-k]$ and $\Dbb^{\geq k}=\Dbb^{\geq 1}[1-k]$ for shifts of the
standard aisle and coaisle. Note that the standard $t$-structure in $\D(R)$ is nondegenerate but not
bounded, while its restriction to the subcategory of bounded complexes $\D^b(R)$ is both
nondegenerate and bounded. 
\end{ex}

In the derived category of a ring $R$ we may consider a useful notion of \textbf{directed homotopy
colimit}: this is the derived functor of the direct limit functor. Since direct limits are exact in
any Grothendieck category (and, hence, in $\mathsf{Mod}(R)$), the directed homotopy colimit of a
directed system of complexes of $R$-modules is the object of $\D(R)$ obtained by applying the direct limit functor of $\mathsf{Mod}(R)$ componentwise. We
now recall some properties of $t$-structures. 

\begin{dfn}\label{dfn:t-structure-properties}
For a ring $R$, a $t$-structure $\Tbb=(\Ucal,\Vcal)$ in $\D(R)$ is said to be
\begin{itemize}
\item \textbf{intermediate} if there are integers $a<b$ such that $\Dbb^{\leq
a}\subseteq\Ucal\subseteq \Dbb^{\leq b}$;
\item \textbf{smashing} if $\Vcal$ is closed under coproducts;
\item \textbf{homotopically smashing} if $\Vcal$ is closed under directed homotopy colimits;
\item \textbf{compactly generated} if there is a set of compact objects $\Scal$ in $\Ucal$ such that
	$\Vcal=\Scal^\bot$.
\item \textbf{cosilting} if $\Tbb=({}^{\bot_{\leq0}}C,{}^{\bot_{>0}}C)$ for
	some object $C$ of $\D(R)$ (then called a \textbf{cosilting} object);
\item \textbf{cotilting} if it is cosilting and $C^I$ lies in ${}^{\bot_{\neq0}}C$ for all sets $I$ (in this case
	$C$ is said to be \textbf{cotilting}).
\end{itemize}
If $R$ is coherent, we say that $\Tbb$ is \textbf{restrictable} if the pair $(\Ucal\cap
\D^b(\mathsf{mod}(R)),\Vcal\cap\D^b(\mathsf{mod}(R)))$ is a $t$-structure of the triangulated
subcategory $\D^b(\mathsf{mod}(R))$ of $\D(R)$.
\end{dfn}

Compactly generated triangulated categories $\Dcal$ are often studied through the help of the
category  of additive functors $(\Dcal^c)^{op}\to \Mod(\mathbb{Z})$.  This category, usually denoted
by $\mathsf{Mod}(\Dcal^c)$, is a locally coherent Grothendieck category and the functor $\mathbf{y}$
sending an $X$ object of $\Dcal$ to the functor $\mathbf{y}X:=\mathsf{Hom}_\Dcal(-,X)_{|\Dcal^c}$ is
a cohomological functor. 
Properties of the Grothendieck category $\mathsf{Mod}(\Dcal^c)$ are reflected on properties of
$\Dcal$ via $\mathbf{y}$, and this allows us to distinguish objects of $\Dcal$ by the way they relate to
the subcategory of compact objects (see \cite{Bel} and \cite{KrauseInv} for more details). An
important class of objects of $\Dcal$ is given by those $X$ whose corresponding functor
$\mathbf{y}X$ is injective in the category $\mathsf{Mod}(\Dcal^c)$ --- these objects are called
\textbf{pure-injective}. It turns out that, in many contexts, cosilting (and cotilting) objects are
automatically pure-injective (see \cite{Ba0,Stov} and \cite{MV}). It is not known whether every
cosilting object in a compactly generated triangulated category is necessarily pure-injective. 

The following theorems relate properties of the $t$-structures with properties of their hearts. The
first one is a particular case of \cite[Theorem 4.6]{L}, adding to a sequence of previous results in
\cite{AHMV} and \cite{SSV}.

\begin{thm}\label{thm:$t$-structure-types}
The following are equivalent for a nondegenerate $t$-structure $\Tbb$ of $\D(R)$, for a ring $R$.
\begin{enumerate}
	\item $\Tbb$ is homotopically smashing;
	\item $\Tbb$ is smashing and its heart is a Grothendieck category;
	\item $\Tbb$ is cosilting, for a pure-injective cosilting object.
\end{enumerate}
Moreover, a compactly generated $t$-structure has the above properties.
\end{thm}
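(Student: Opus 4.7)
The plan is to establish a cycle of implications $(3) \Rightarrow (1) \Rightarrow (2) \Rightarrow (3)$, and then verify the ``Moreover'' statement separately, mirroring the route taken in \cite{SSV,AHMV,L}. For $(3) \Rightarrow (1)$, let $C$ be a pure-injective cosilting object with $\Vcal = {}^{\perp_{>0}}C$, and let $(X_i)_{i \in I}$ be a directed system in $\Vcal$. The strategy is to apply the functor $\Hom_{\D(R)}(-,C[k])$, for $k>0$, to the triangle
\[
\coprod_{i} X_i \longrightarrow \coprod_{i} X_i \longrightarrow \Hocolim X_i \longrightarrow \coprod_{i} X_i[1]
\]
defining the directed homotopy colimit. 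Pure-injectivity of $C$ ensures that $\Hom_{\D(R)}(-,C)$ converts directed homotopy colimits into inverse limits; combined with the vanishing of each $\Hom_{\D(R)}(X_i,C[k])$, this forces $\Hom_{\D(R)}(\Hocolim X_i,C[k])=0$, so the coaisle is closed under directed homotopy colimits.

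For $(1) \Rightarrow (2)$, smashing is immediate from homotopically smashing, since arbitrary coproducts are directed colimits of finite subcoproducts. The substantive content is showing that $\Hcal_\Tbb$ is Grothendieck, which is the heart of \cite[Theorem 4.6]{L}. I would argue it by noting that the cohomological functor $H^0_\Tbb\colon \D(R) \to \Hcal_\Tbb$ preserves coproducts (by smashing) and preserves directed colimits of coaisle objects (by condition $(1)$); this is what transports AB5-exactness of direct limits from $\D(R)$ down to $\Hcal_\Tbb$. A generator of the heart is then produced by applying the truncation functors of $\Tbb$ to a compact generator of $\D(R)$. For $(2) \Rightarrow (3)$, the strategy is to pick an injective cogenerator $E$ of the Grothendieck heart $\Hcal_\Tbb$ and verify, using the smashing hypothesis together with Brown representability, that $E$ regarded as an object of $\D(R)$ is a pure-injective cosilting object whose associated $t$-structure is exactly $\Tbb$; pure-injectivity in $\D(R)$ is extracted from injectivity in $\Hcal_\Tbb$ via the smashing truncation.

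For the ``Moreover'' assertion, suppose $\Tbb$ is compactly generated by $\Scal \subseteq \D(R)^c$, so that $\Vcal = \Scal^{\perp}$. Given a directed system $(X_i)$ in $\Vcal$ and $S \in \Scal$, the vanishing $\Hom_{\D(R)}(S, X_i)=0$ together with closure of $\Vcal$ under $[1]$ also yields $\Hom_{\D(R)}(S, X_i[1])=0$. Since $S$ is compact, $\Hom_{\D(R)}(S,-)$ commutes with the coproducts in the defining triangle of $\Hocolim X_i$, and the resulting long exact sequence forces $\Hom_{\D(R)}(S,\Hocolim X_i)=0$. Hence $\Hocolim X_i \in \Vcal$ and $\Tbb$ is homotopically smashing. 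The main obstacle in the whole argument is the implication $(1) \Rightarrow (2)$: the construction of a generator of the heart and the careful verification of AB5 is where the bulk of \cite{SSV} and \cite{L} is spent, and a complete proof requires a delicate interplay of the truncation functors, coproduct preservation of $H^0_\Tbb$, and Brown representability for the compactly generated triangulated category $\D(R)$.
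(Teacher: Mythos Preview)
The paper does not supply its own proof of this theorem: it is stated as a citation of \cite[Theorem~4.6]{L}, building on \cite{AHMV} and \cite{SSV}. Your outline therefore cannot be compared against a proof in the paper, but it does follow the route taken in those references, and your identification of $(1)\Rightarrow(2)$ as the substantive implication is accurate.

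One technical point deserves attention. In your argument for $(3)\Rightarrow(1)$ (and similarly in the ``Moreover'' clause) you invoke the triangle
\[
\coprod_i X_i \longrightarrow \coprod_i X_i \longrightarrow \Hocolim_i X_i \longrightarrow \Big(\coprod_i X_i\Big)[1],
\]
but this Milnor-type triangle is only available for \emph{sequential} (i.e.\ $\Nbb$-indexed) systems, not for arbitrary directed ones. The paper defines the directed homotopy colimit as the derived functor of the direct limit (computed componentwise in $\Mod(R)$), and the coaisle must be shown closed under this more general operation. The correct argument, as carried out in \cite{L} and \cite{SSV}, works in the derivator framework: pure-injectivity of $C$ is precisely the statement that $\Hom_{\D(R)}(-,C)$ sends directed homotopy colimits to inverse limits, which then gives the vanishing directly without passing through a Milnor triangle. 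Likewise, for the compactly generated case one uses that $\Hom_{\D(R)}(S,-)$ commutes with directed homotopy colimits for compact $S$, again a derivator-level fact rather than a consequence of the sequential triangle. With this adjustment your sketch is sound and matches the literature the paper relies on.
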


For intermediate $t$-structures with the properties of the theorem above, the
question of whether or not they are restrictable
boils down to a property of the heart, as follows.

\begin{thm}\cite{MZ}\label{MZ}
Let $R$ be a noetherian ring and let $\Tbb$ be an intermediate smashing $t$-structure in $\D(R)$ with a
Grothendieck heart $\Hcal_\Tbb$. Then $\Tbb$ is restrictable if and only if $\Hcal_\Tbb$ is locally
coherent and $\mathsf{fp}(\Hcal_\mathbb{T})=\Hcal_\mathbb{T}\cap \D^b(\mathsf{mod}(R))$.
\end{thm}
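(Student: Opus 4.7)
The plan is to exploit that, by Theorem \ref{thm:$t$-structure-types}, the hypotheses on $\Tbb$ force it to be homotopically smashing, so $H^0_\Tbb$ commutes with directed homotopy colimits and $\Hcal_\Tbb$ is closed under them inside $\D(R)$; this, together with the intermediate assumption, lets one pass efficiently between $\D^b(\mod(R))$ and $\Hcal_\Tbb$. Throughout, write $\Hcal' := \Hcal_\Tbb \cap \D^b(\mod(R))$.

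For the forward direction, assume $\Tbb$ restricts, so that $\Hcal'$ is the heart of the restricted $t$-structure and in particular abelian. The first step is approximation: since $\Tbb$ is intermediate every $X \in \Hcal_\Tbb$ has cohomology modules concentrated in a fixed range $[a,b]$, and since $R$ is noetherian each $H^i(X)$ is a directed colimit of its finitely generated submodules; these presentations can be globalised (via a termwise finitely generated free resolution of $X$, truncated to the relevant range) into a directed homotopy colimit $X \cong \Hocolim X_i$ with $X_i \in \D^b(\mod(R))$. Applying $H^0_\Tbb$ yields $X = \varinjlim H^0_\Tbb(X_i)$ in $\Hcal_\Tbb$, with each $H^0_\Tbb(X_i) \in \Hcal'$ by restrictability. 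The second step is to show each $Y \in \Hcal'$ is finitely presented in $\Hcal_\Tbb$: using the natural isomorphism $\Hom_{\Hcal_\Tbb}(Y, X) \cong \Hom_\D(Y, X)$ for $X \in \Hcal_\Tbb$ and a termwise finitely generated projective resolution of $Y$ (available over noetherian $R$), one checks that $\Hom_\D(Y, -)$ sends directed homotopy colimits in $\D(R)$ to direct limits of abelian groups. Together these yield that $\Hcal_\Tbb$ is locally finitely presented with $\Hcal' \subseteq \fp(\Hcal_\Tbb)$; the reverse inclusion follows because any finitely presented object that is a directed colimit of $\Hcal'$-objects must be a retract of one of them, and $\Hcal'$ is idempotent complete as the heart of a $t$-structure. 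Since $\fp(\Hcal_\Tbb) = \Hcal'$ is then abelian, $\Hcal_\Tbb$ is locally coherent.

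For the backward direction, restrictability amounts to showing the $\Tbb$-truncations preserve $\D^b(\mod(R))$; since $\Tbb$ is intermediate only finitely many $H^i_\Tbb(X)$ are nonzero for $X \in \D^b(\mod(R))$, and dévissage along them --- using that $\fp(\Hcal_\Tbb)$ is closed under extensions, which is built into the local coherence hypothesis --- reduces the problem to showing $H^0_\Tbb(M) \in \fp(\Hcal_\Tbb) = \Hcal'$ for every $M \in \mod(R)$. This is exactly the finite-presentation verification of the forward direction, applied now to $M$ via the isomorphism $\Hom_{\Hcal_\Tbb}(H^0_\Tbb M, -) \cong \Hom_\D(M, -)_{|\Hcal_\Tbb}$ and a finitely generated projective resolution of $M$.

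The hard part is the approximation step in the forward direction: writing an arbitrary object of $\Hcal_\Tbb$ as a directed homotopy colimit of objects of $\D^b(\mod(R))$ requires coherently globalising the colimit presentations of the individual cohomology modules across the differentials of a complex. The standard remedy is to resolve $X$ by a Cartan--Eilenberg-type complex of termwise finitely generated free modules and truncate to the bounded range imposed by intermediacy; the remaining ingredients --- the adjunction formulas for the $t$-structure, the computation of $\Hom_\D$ via projective resolutions, and the dévissage of $\D^b(\mod(R))$ along cohomological length --- are comparatively routine.
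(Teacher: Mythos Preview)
The paper does not give its own proof of this statement: Theorem~\ref{MZ} is simply quoted from \cite{MZ} (Marks--Zvonareva, in preparation) and used as a black box. There is therefore nothing in the present paper to compare your argument against.

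That said, your sketch contains a genuine error in the backward direction. The displayed isomorphism
\[
\Hom_{\Hcal_\Tbb}(H^0_\Tbb M,\,-\,)\ \cong\ \Hom_{\D(R)}(M,\,-\,)\big|_{\Hcal_\Tbb}
\]
for $M\in\mod(R)$ is false in general: $H^0_\Tbb$ is neither a left nor a right adjoint to the inclusion $\Hcal_\Tbb\hookrightarrow\D(R)$. For a concrete failure, take $\Tbb$ to be the HRS-tilt at a torsion pair $(\Tcal,\Fcal)$ in $\Mod(R)$ and $M\in\Fcal$ nonzero; then $H^0_\Tbb(M)=t(M)=0$, so the left-hand side vanishes, while $\Hom_{\D(R)}(M,X)$ can be nonzero for suitable $X\in\Hcal_\Tbb$ (e.g.\ via $\Ext^1_R(M,H^{-1}(X))$ or $\Hom_R(M,H^0(X))$). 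What \emph{is} true is the adjunction for the \emph{extremal} $\Tbb$-cohomology: if $q$ is the top $\Tbb$-degree of $M$ then $\tau^{\geq q}_\Tbb M\simeq H^q_\Tbb(M)[-q]$, and the usual orthogonality between $\Tbb^{\leq q-1}$ and $\Tbb^{\geq q}$ gives
\[
\Hom_{\Hcal_\Tbb}(H^q_\Tbb(M),X)\ \cong\ \Hom_{\D(R)}(M,\,X[-q])
\qquad\text{for all }X\in\Hcal_\Tbb.
\]
This, together with your observation that $\Hom_{\D(R)}(M,-)$ commutes with the relevant directed homotopy colimits (using a bounded-above free resolution with finitely generated terms and the uniform lower bound on $\Hcal_\Tbb$ coming from intermediacy), shows $H^q_\Tbb(M)\in\fp(\Hcal_\Tbb)=\Hcal'$. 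Peeling off this top cohomology via the truncation triangle and iterating (the cone stays in $\D^b(\mod(R))$ because $\Hcal'\subseteq\D^b(\mod(R))$) then yields $H^i_\Tbb(M)\in\Hcal'$ for all $i$, after which your d\'evissage on standard cohomological length finishes the argument. The forward direction, and in particular your approximation step, is reasonable as sketched.
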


We conclude this subsection by recalling an important and much studied source of $t$-structures, which
will be central to our paper.

\begin{thm}[\cite{HRS}]
Let $\Dcal$ be a triangulated category and $\Tbb$ a $t$-structure in $\Dcal$, with heart
$\Hcal_\mathbb{T}$. Given a torsion
pair $\mathbf{t}=(\Tcal, \Fcal)$ in $\Hcal_\Tbb$, there is a $t$-structure
$\Tbb_\mathbf{t}:=(\Tbb_\mathbf{t}^{\leq 0},\Tbb_\mathbf{t}^{\geq 1})$ in $\Dcal$
defined as follows:
\begin{align*}
	\Tbb_\mathbf{t}^{\leq 0}&=\{X\in \mathcal D: H^0_\mathbb{T}(X)\in \mathcal T, \ H^0_\mathbb{T}(X[k])=0, \forall k>0\}\\
	\Tbb_\mathbf{t}^{\geq 1}&=\{X\in \mathcal D: H^0_\mathbb{T}(X)\in \mathcal F, \
	H^0_\mathbb{T}(X[k])=0, \forall k<0\}.
\end{align*}
This $t$-structure is called the \textbf{HRS-tilt of $\mathbb{T}$ with respect to $\mathbf{t}$} and its heart is
\[\mathcal H_\mathbf{t}:=\{X\in \mathcal D: H^0_\mathbb{T}(X[-1])\in \mathcal F, H^0_\mathbb{T}(X)\in \mathcal T,
	H^0_\mathbb{T}(X[k])=0\text{ for every } k\neq -1,0\}.\]
The pair $(\Fcal[1],\Tcal)$ is a torsion pair in $\Hcal_\mathbf{t}$.
\end{thm}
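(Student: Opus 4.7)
The plan is to verify the three axioms of Definition \ref{t-def} for the pair $\Tbb_\mathbf{t}=(\Tbb_\mathbf{t}^{\leq 0},\Tbb_\mathbf{t}^{\geq 1})$, then obtain the description of the heart $\Hcal_\mathbf{t}=\Tbb_\mathbf{t}^{\geq 1}[1]\cap\Tbb_\mathbf{t}^{\leq 0}$ by unwinding the definitions, and finally check that $(\Fcal[1],\Tcal)$ satisfies the orthogonality and decomposition axioms of a torsion pair in $\Hcal_\mathbf{t}$. Throughout I abbreviate $H^k$ for $H^k_\Tbb$ and write $\tau^{\leq n},\tau^{\geq n}$ for the truncations attached to $\Tbb$, freely using $H^k(X[j])=H^{k+j}(X)$.

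The shift closure $\Tbb_\mathbf{t}^{\leq 0}[1]\subseteq\Tbb_\mathbf{t}^{\leq 0}$ is immediate: if $H^0(X)\in\Tcal$ and $H^k(X)=0$ for $k>0$, then $H^k(X[1])=H^{k+1}(X)=0$ for $k\ge0$, so trivially $H^0(X[1])\in\Tcal$. For orthogonality, take $X\in\Tbb_\mathbf{t}^{\leq 0}$ and $Y\in\Tbb_\mathbf{t}^{\geq 1}$; by definition $X\in\Ucal=\Tbb^{\leq 0}$ and $Y\in\Vcal[1]=\Tbb^{\geq 0}$. Applying $\Hom_\Dcal(X,-)$ to the triangle $H^0(Y)\to Y\to\tau^{\geq 1}(Y)\to H^0(Y)[1]$ and invoking $\Hom_\Dcal(\Tbb^{\leq 0},\Tbb^{\geq 1})=\Hom_\Dcal(\Tbb^{\leq 0},\Tbb^{\geq 2})=0$ reduces the vanishing to $\Hom_\Dcal(X,H^0(Y))=0$. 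Then applying $\Hom_\Dcal(-,H^0(Y))$ to $\tau^{\leq-1}(X)\to X\to H^0(X)\to(\tau^{\leq-1}X)[1]$ and using $\Hom_\Dcal(\Tbb^{\leq-1},\Tbb^{\geq0})=\Hom_\Dcal(\Tbb^{\leq-2},\Tbb^{\geq0})=0$ further reduces it to $\Hom_{\Hcal_\Tbb}(H^0(X),H^0(Y))=0$, which holds since $H^0(X)\in\Tcal$ and $H^0(Y)\in\Fcal$.

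The main step is the decomposition axiom $\Tbb_\mathbf{t}^{\leq 0}\ast\Tbb_\mathbf{t}^{\geq 1}=\Dcal$. Given $X\in\Dcal$, I take the $\mathbf{t}$-torsion sequence $0\to T\to H^0(X)\to F\to 0$ in $\Hcal_\Tbb$, compose the canonical map $\tau^{\leq 0}(X)\to H^0(X)$ with the epimorphism onto $F$, and complete the result to a triangle
\[ A \longrightarrow \tau^{\leq 0}(X) \longrightarrow F \longrightarrow A[1]. \]
The long exact cohomology sequence in $\Hcal_\Tbb$ then yields $H^k(A)=H^k(X)$ for $k<0$, $H^0(A)=T$, and $H^k(A)=0$ for $k>0$, so $A\in\Tbb_\mathbf{t}^{\leq 0}$. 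Applying the octahedral axiom to the composition $A\to\tau^{\leq 0}(X)\to X$ produces a triangle $A\to X\to B\to A[1]$ together with a triangle $F\to B\to\tau^{\geq 1}(X)\to F[1]$; from the latter one reads off $H^0(B)=F\in\Fcal$, $H^k(B)=H^k(X)$ for $k\ge 1$, and $H^k(B)=0$ for $k<0$, so $B\in\Tbb_\mathbf{t}^{\geq 1}$, as required.

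With the $t$-structure axioms established, the formula for $\Hcal_\mathbf{t}$ follows by intersecting the condition $X[-1]\in\Tbb_\mathbf{t}^{\geq 1}$ (which translates to $H^{-1}(X)\in\Fcal$ and $H^k(X)=0$ for $k<-1$) with $X\in\Tbb_\mathbf{t}^{\leq 0}$. For the torsion pair $(\Fcal[1],\Tcal)$ in $\Hcal_\mathbf{t}$, orthogonality $\Hom_\Dcal(F[1],T)\cong\Hom_\Dcal(F,T[-1])$ vanishes because $F,T\in\Hcal_\Tbb$ and, for the heart of any $t$-structure, $\Hom_\Dcal(\Hcal_\Tbb,\Hcal_\Tbb[-1])=0$. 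Finally, the required decomposition of an arbitrary $X\in\Hcal_\mathbf{t}$ is extracted from the $\Tbb$-truncation triangle $H^{-1}(X)[1]\to X\to H^0(X)\to H^{-1}(X)[2]$, which is a short exact sequence in $\Hcal_\mathbf{t}$ with subobject in $\Fcal[1]$ and quotient in $\Tcal$. The only genuine difficulty is the octahedral argument in the decomposition step; the remaining verifications are cohomological bookkeeping.
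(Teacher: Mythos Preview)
Your proof is correct and follows the standard argument for the HRS-tilt construction. Note, however, that the paper does not supply its own proof of this theorem: it is quoted as a result of Happel--Reiten--Smal{\o} \cite{HRS} and stated without proof, so there is nothing in the paper to compare your argument against. Your verification of the three $t$-structure axioms, the description of the heart, and the torsion pair $(\Fcal[1],\Tcal)$ are all sound; the octahedral step for the decomposition axiom is exactly the classical approach.
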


\begin{rmk}\label{TTF in HRS}
In the theorem above, if $\Hcal_\mathbf{t}$ is a Grothendieck
category and $\Tcal$ is a hereditary torsion class in $\Hcal_\mathbb{T}$, it can moreover be shown that $\Tcal$ is in
fact a TTF class in $\Hcal_\mathbf{t}$. Since $\Hcal_\mathbf{t}$ is Grothendieck and $\Tcal$ is a
torsionfree class, $\Tcal$ is closed under coproducts and it only remains to see that it is closed
under quotients. If $f\colon X\to Z$ is an epimorphism in $\Hcal_\mathbf{t}$,
$H^{-1}_\mathbb{T}(Z)$ lies in $\Fcal$ (because $Z$ lies $\Hcal_\mathbf{t}$) and, simultaneously,
$H^{-1}_\mathbb{T}(Z)$ is a subobject (in $\Hcal_\mathbb{T}$) of $\mathsf{Ker}(f)$, which is an object of
$\Tcal$. Thus, we have $H^{-1}_\mathbb{T}(Z)=0$ and $Z$ lies in $\Tcal$.
\end{rmk}

There are many results in the literature linking the properties of a torsion pair $\mathbf{t}$ 
and those of the HRS-tilt and its associated heart. We recall the following.

\begin{thm}\label{Saorin}
Let $R$ be a ring and let $\mathbb{T}$ be a nondegenerate $t$-structure in $\D(R)$
satisfying the equivalent conditions (1--3) of Theorem
\ref{thm:$t$-structure-types}. Suppose that $\mathbf{t}=(\Tcal,\Fcal)$ is a torsion pair in
the heart $\Hcal_\mathbb{T}$ of $\Tbb$, and let $\mathbb{T}_\mathbf{t}$ be the associated HRS-tilt, with heart
$\Hcal_\mathbf{t}$. Then:
\begin{enumerate}
\item\cite[Proposition 6.1]{SSV} $\Tbb_\mathbf{t}$ satisfies the equivalent conditions of Theorem
	\ref{thm:$t$-structure-types} if and only if $\mathbf{t}$ is of finite type in $\Hcal_\mathbb{T}$. 
\item\cite[Proposition 6.4]{SSV} If $\Tbb_\mathbf{t}$ is compactly generated, then $\mathbf{t}$ is
	generated by a set of finitely presented objects in $\Hcal_\mathbb{T}$. 

\item If $R$ is noetherian and $\mathbb{T}$ is intermediate and restrictable, then the following are equivalent:
\begin{enumerate}
\item $\mathbf{t}$ is of finite type and it is restrictable;
\item $\mathbb{T}_\mathbf{t}$ is restrictable and $\Hcal_\mathbf{t}$ is Grothendieck;
\item $\Hcal_\mathbf{t}$ is a locally coherent Grothendieck category with $\mathsf{fp}(\Hcal_\mathbf{t})=\Hcal_\mathbf{t}\cap \D^b(\mathsf{mod}(R))$
\end{enumerate}
\end{enumerate}
\end{thm}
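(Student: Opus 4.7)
My plan is to prove the cycle $(a)\Rightarrow (b)\Rightarrow (c)\Rightarrow (a)$, with two key tools: Theorem~\ref{Saorin}(1), which equates finite type of $\mathbf{t}$ with smashing plus Grothendieck heart for $\Tbb_\mathbf{t}$, and Theorem~\ref{MZ}, which characterises restrictability of an intermediate smashing $t$-structure with Grothendieck heart over a noetherian ring via local coherence and the $\fp$-description of the heart. As a preliminary, the HRS-formulas give $\Ucal[1]\subseteq \Tbb_\mathbf{t}^{\leq 0}\subseteq \Ucal$, so $\Tbb_\mathbf{t}$ inherits intermediacy from $\mathbb{T}$; this is the precondition for applying Theorem~\ref{MZ} to $\Tbb_\mathbf{t}$.

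For $(a)\Rightarrow (b)$: Theorem~\ref{Saorin}(1) applied to the finite type pair $\mathbf{t}$ gives $\Tbb_\mathbf{t}$ smashing with Grothendieck heart. For restrictability of $\Tbb_\mathbf{t}$, I would identify its restriction to $\D^b(\mod(R))$ with the HRS-tilt, taken within $\D^b(\mod(R))$, of $\mathbb{T}|_{\D^b(\mod(R))}$ along the torsion pair $\mathbf{t}\cap \fp(\Hcal_\Tbb)$. By restrictability of $\mathbf{t}$ together with the equality $\fp(\Hcal_\Tbb)=\Hcal_\Tbb\cap \D^b(\mod(R))$ (Theorem~\ref{MZ} applied to $\mathbb{T}$), this intersection is a torsion pair in $\fp(\Hcal_\Tbb)$. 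Matching aisles uses that $H^i_\Tbb(X)\in \fp(\Hcal_\Tbb)$ for $X\in \D^b(\mod(R))$, a consequence of restrictability of $\mathbb{T}$.

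For $(b)\Rightarrow (c)$ and $(c)\Rightarrow (a)$: the central step is Theorem~\ref{MZ} applied to $\Tbb_\mathbf{t}$, which gives the equivalence between its restrictability and the $\fp$-description of $\Hcal_\mathbf{t}$. For the restrictability of $\mathbf{t}$ needed in $(c)\Rightarrow (a)$, I use a truncation argument: given $X\in \fp(\Hcal_\Tbb)=\Hcal_\Tbb\cap \D^b(\mod(R))$, the torsion sequence $0\to tX\to X\to X/tX\to 0$ in $\Hcal_\Tbb$ coincides with the $\Tbb_\mathbf{t}$-truncation triangle of $X$, since $tX\in \Tcal\subseteq \Tbb_\mathbf{t}^{\leq 0}$ and $X/tX\in \Fcal\subseteq \Tbb_\mathbf{t}^{\geq 1}$. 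Hence $tX=\tau^{\leq 0}_{\Tbb_\mathbf{t}}(X)$ and $X/tX=\tau^{\geq 1}_{\Tbb_\mathbf{t}}(X)$, so restrictability of $\Tbb_\mathbf{t}$ places both in $\D^b(\mod(R))\cap \Hcal_\Tbb=\fp(\Hcal_\Tbb)$. Finite type of $\mathbf{t}$ is then given by Theorem~\ref{Saorin}(1) once smashing of $\Tbb_\mathbf{t}$ is secured.

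The main obstacle is the bookkeeping of the smashing hypothesis required by Theorem~\ref{MZ}, since (b) and (c) provide the Grothendieck heart but not smashing directly. My plan is to order the implications so that smashing is supplied before each invocation of Theorem~\ref{MZ}: from Theorem~\ref{Saorin}(1) under (a), and in the return directions from the already-established portion of the cycle (for instance, deriving finite type of $\mathbf{t}$ from (b) via the truncation argument above and then reapplying Theorem~\ref{Saorin}(1)). Verifying that the implications chain together without circularity—in particular, that smashing of $\Tbb_\mathbf{t}$ is always established before restrictability is passed through Theorem~\ref{MZ}—is the most delicate point of the proof.
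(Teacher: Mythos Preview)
Your approach is essentially the paper's own: the paper's proof of (3) is a two-line reference, saying that the comparison between restrictability of $\mathbf{t}$ and restrictability of $\Tbb_\mathbf{t}$ follows the argument of \cite[Proposition~5.1]{S} (which is exactly your truncation/HRS-tilt-in-$\D^b(\mod(R))$ argument), and that the equivalence with (c) is Theorem~\ref{MZ}. So the two ingredients you isolate---Theorem~\ref{MZ} and the truncation identification of the torsion sequence with the $\Tbb_\mathbf{t}$-truncation---are precisely what the paper uses.

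On the point you flag as most delicate: you are right that Theorem~\ref{MZ} requires smashing, and that (b) and (c) only explicitly provide a Grothendieck heart. But your proposed workaround via ``ordering implications'' is more convoluted than necessary and does not quite close: the truncation argument you describe extracts \emph{restrictability} of $\mathbf{t}$ from restrictability of $\Tbb_\mathbf{t}$, not finite type, so you cannot feed it back into part~(1) without an extra step. The clean resolution is that the content of \cite[Proposition~6.1]{SSV} (packaged here as part~(1)) is in fact the stronger statement that, for the HRS-tilt of a homotopically smashing $t$-structure, the heart $\Hcal_\mathbf{t}$ being Grothendieck is \emph{by itself} equivalent to $\mathbf{t}$ being of finite type, and hence to $\Tbb_\mathbf{t}$ being smashing with Grothendieck heart. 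Thus the Grothendieck hypothesis in (b) and (c) already supplies smashing, and Theorem~\ref{MZ} applies directly in both directions $(b)\Leftrightarrow(c)$. The paper's proof, by deferring to \cite{S} and \cite{SSV}, is implicitly using this; once you invoke it, the circularity concern disappears and your cycle $(a)\Rightarrow(b)\Rightarrow(c)\Rightarrow(a)$ goes through cleanly.
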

\begin{proof}
We comment on item (3). This is a small generalisation of \cite[Theorem 5.2]{S}. The arguments
comparing between the restrictability of $\mathbf{t}$ and the restrictability of $\Tbb$ follow
analogously to those in the proof of \cite[Proposition 5.1]{S}. The equivalence with item (c)
follows from Theorem \ref{MZ}.
\end{proof}

\subsection{Derived equivalences}
It is quite natural to ask whether the derived category of a heart $\Hcal$ in a triangulated
category $\Dcal$ is (triangle) equivalent to $\Dcal$. This issue was first addressed in \cite{BBD},
where a functor between the bounded derived category of $\Hcal$ and $\Dcal$ was built (provided that
$\Dcal$ is \textit{nice enough} --- which $\D(R)$ certainly is, for any ring $R$). A similar
construction for the unbounded derived category is the subject of \cite{V}; the second statement of the
following theorem has been translated from the language of derivators, and specialised to our
setting.

\begin{thm}\label{prop:bound2unbound}
Let $R$ be a ring and $\Tbb$ an intermediate $t$-structure in $\D(R)$ with heart $\Hcal_\mathbb{T}$. 
\begin{enumerate}
\item\cite[\S 3.1]{BBD} The inclusion $\Hcal_\mathbb{T}\hookrightarrow \D^b(R)$ extends to a triangle
	functor $\realb_\Tbb\colon \D^b(\Hcal_\mathbb{T})\to \D^b(R)$, called a \textbf{bounded realisation
	functor}. This functor induces isomorphisms
	\[ \Hom_{\Hcal_\Tbb}(X,Y)\simeq\Hom_{\D(R)}(X,Y)\quad \text{and}\quad \Ext_{\Hcal_\Tbb}^1(X,Y)\simeq
	\Hom_{\D(R)}(X,Y[1])\]
	for all $X,Y$ of ${\Hcal_\Tbb}$.  Moreover, $\realb_\Tbb$ is an equivalence if and only if for
	every $n>1$ we also have
	\[\Ext_{\Hcal_\Tbb}^n(X,Y)\simeq\Hom_{\D(R)}(X,Y[n]).\]
\item\cite[Theorem B]{V} If $\Tbb$ is smashing and ${\Hcal_\Tbb}$ is Grothendieck (in other
	words, if $\Tbb$ is a cosilting $t$-structure), then the inclusion ${\Hcal_\Tbb}\hookrightarrow\D(R)$
	extends to a triangle functor $\real_\Tbb\colon\D({\Hcal_\Tbb})\to \D(R)$, called an \textbf{(unbounded)
	realisation functor}, which restricts to a bounded realisation functor $\realb_\Tbb\colon \D^b({\Hcal_\Tbb})\to \D^b(R)$. Moreover, if $\realb_\Tbb$ is a triangle equivalence, then so is $\real_\Tbb$.
\end{enumerate}
\end{thm}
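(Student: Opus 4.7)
The plan is to treat the two items separately: (1) is the classical Beilinson--Bernstein--Deligne construction, while (2) is Virili's derivator-theoretic refinement.

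For item (1), the approach is to build $\realb_\Tbb$ via the filtered derived category $\D F^b(R)$ of \cite{BBD}. The key point is that the $t$-structure $\Tbb$ lifts to a $t$-structure on $\D F^b(R)$ whose heart is canonically equivalent to $\D^b(\Hcal_\Tbb)$; composing the inverse of this equivalence with the forgetful functor $\omega\colon\D F^b(R)\to\D^b(R)$ yields $\realb_\Tbb$. The identification $\Hom_{\Hcal_\Tbb}(X,Y)\simeq\Hom_{\D(R)}(X,Y)$ is automatic from $\Hcal_\Tbb$ being a full subcategory of $\D(R)$. For $\Ext^1$, the observation is that triangles $Y\to C\to X\to Y[1]$ in $\D(R)$ with all three vertices in $\Hcal_\Tbb$ are in bijection with short exact sequences in $\Hcal_\Tbb$ (using that the heart is extension-closed), and every morphism $X\to Y[1]$ produces such a triangle because the cone lands in $\Hcal_\Tbb = \Ucal\cap\Vcal[1]$ by standard $t$-structure truncation arguments.

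For the equivalence criterion, fully faithfulness of $\realb_\Tbb$ is equivalent to the stated $\Ext^n$ isomorphisms for $n\geq 2$ (the cases $n=0,1$ being automatic): necessity follows from the compatibility of $\realb_\Tbb$ with Yoneda composition, and sufficiency by dévissage along the standard $t$-structure on $\D^b(\Hcal_\Tbb)$, which reduces computing $\Hom$'s in $\D^b(\Hcal_\Tbb)$ to computing $\Ext$'s between objects of $\Hcal_\Tbb$ in various degrees. Once fully faithfulness is established, essential surjectivity uses the intermediate hypothesis on $\Tbb$: then $\D^b(R)$ is the thick subcategory generated by finitely many shifts of $\Hcal_\Tbb$, all of which lie visibly in the essential image of $\realb_\Tbb$.

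For item (2), following \cite{V}, the smashing hypothesis together with the Grothendieck structure of $\Hcal_\Tbb$ (equivalently, the cosilting hypothesis, by Theorem \ref{thm:$t$-structure-types}) gives the homotopical control needed to extend the construction to unbounded complexes. Concretely, one promotes the heart inclusion to a morphism of stable derivators and takes an appropriate Kan extension to obtain $\real_\Tbb\colon\D(\Hcal_\Tbb)\to\D(R)$, which by construction restricts to $\realb_\Tbb$ on bounded subcategories. If $\realb_\Tbb$ is a triangle equivalence, then $\real_\Tbb$ is fully faithful on a set of compact generators of $\D(\Hcal_\Tbb)$ and preserves coproducts (by smashing), hence is fully faithful on all of $\D(\Hcal_\Tbb)$; essential surjectivity then propagates to the unbounded setting via a localising subcategory argument starting from the bounded image. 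The main obstacle is the functorial and triangulated set-up of the realisation functors: in the bounded case this is handled by the filtered derived category formalism, while in the unbounded case the derivator-theoretic machinery of \cite{V} is essential to ensure well-definedness and compatibility of $\real_\Tbb$ with $\realb_\Tbb$.
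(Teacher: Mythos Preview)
Your treatment of item (1) is fine and essentially matches the paper's use of the BBD machinery; there is nothing to add.

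For item (2), however, your argument diverges from the paper's and contains a genuine gap. You claim that once $\realb_\Tbb$ is an equivalence, $\real_\Tbb$ is fully faithful because it is so on a set of compact generators of $\D(\Hcal_\Tbb)$ and preserves coproducts. But for an arbitrary Grothendieck category $\Hcal_\Tbb$, the derived category $\D(\Hcal_\Tbb)$ is well generated, not compactly generated in general; there is no reason to expect a set of compact generators lying in $\D^b(\Hcal_\Tbb)$. So the reduction you propose does not go through without further input. You also do not explain why \cite[Theorem~B]{V} applies at all: the paper points out that this requires the \emph{intermediate} hypothesis, which (via \cite[Lemma~7.6]{V}) guarantees that products in $\Hcal_\Tbb$ have finite homological dimension --- a condition that is part of the hypotheses of Virili's theorem and is not implied by the cosilting property alone.

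The paper's route from bounded to unbounded equivalence is quite different from yours: rather than a direct devissage on $\D(\Hcal_\Tbb)$, it observes that an intermediate cosilting $t$-structure inducing a bounded derived equivalence is automatically \emph{cotilting} (\cite[Corollary~5.2]{PV}), and then invokes \cite[Theorem~7.9]{V}, which gives the unbounded equivalence directly for cotilting $t$-structures. This detour through cotilting is precisely what circumvents the lack of compact generation in $\D(\Hcal_\Tbb)$.
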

\begin{proof}
In addition to the references provided, we would like to point out that {\cite[Theorem B]{V}}
applies to our context precisely because the fact that the $t$-structure is intermediate guarantees
that products in the heart have finite homological dimension (see \cite[Lemma 7.6]{V}). Finally we
comment on the fact that a bounded equivalence induces an unbounded equivalence in the setting
described above. Indeed, by \cite[Corollary 5.2]{PV}, an intermediate cosilting $t$-structure
inducing bounded derived equivalence is cotilting. The result then follows by \cite[Theorem 7.9]{V}.
\end{proof}

\begin{dfn}\label{def induces der eq}
We say that an intermediate $t$-structure $\mathbb{T}$ in $\D(R)$ with heart $\Hcal_\mathbb{T}$ \textbf{induces a (bounded)
derived equivalence} if $\real_\Tbb$ (respectively, $\realb_\Tbb$) is an equivalence. In that case
we say that $\Hcal_\mathbb{T}$ and $\mathsf{Mod}(R)$ are \textbf{(bounded) derived equivalent}.
\end{dfn}

\section{Preliminaries: commutative noetherian rings}

In this section, $R$ denotes a commutative noetherian ring. The set of prime ideals of $R$,
partially ordered by inclusion, will be denoted by $\Spec(R)$. For an ideal $I\leq R$, we write 
\[\spcl(I) :=\{\mathfrak p\in\Spec(R)\colon I\subseteq\mathfrak p\}\quad \text{and}\quad
\gncl(I):=\{\mathfrak p\in\Spec(R)\colon \mathfrak p\subseteq I\}\]

The set $\Spec(R)$ has a natural topology, whose closed subsets are the $\spcl(I)$ for all ideals
$I\leq R$. This is called the \textbf{Zariski topology} on $\Spec(R)$. This topological space turns
out to encode significant information concerning the representation theory of $R$.

\begin{dfn}
A subset $\mathcal P$ of $\Spec(R)$ is said to be \textbf{specialisation-closed} if for any $\mathfrak p$ in $\mathcal P$ we have that $\spcl(\mathfrak p)$ is contained in
$\mathcal P$. Dually, the subset $\mathcal P$ is called \textbf{generalisation-closed} if for any $\mathfrak p$ in $\mathcal P$ we have that $\gncl(\mathfrak p)$
is contained in $\mathcal P$.
\end{dfn}

Note that the complement of a specialisation-closed subset is generalisation-closed and vice-versa. We will denote the
complement of a subset $\Pcal\subseteq \Spec(R)$ by $\Pcal^\co$. From their definition, specialisation-closed subsets
are (possibly infinite) unions of Zariski-closed subsets, and thus, generalisation-closed subsets are (possibly
infinite) intersections of Zariski-open subsets. For a family $\mathcal P\subseteq\Spec R$, its
\textbf{specialisation closure} is the smallest specialisation-closed set containing $\mathcal P$, namely
$\spcl(\mathcal P):=\bigcup_{\mathfrak p\in\mathcal P}\spcl(\mathfrak p)$.

\subsection{Supports}
Given a prime ideal $\mathfrak p$ of $R$, let $R_\mathfrak{p}$ denote the localisation of $R$ at the
complement of $\mathfrak{p}$ and $k(\mathfrak p)=R_{\mathfrak p}/\mathfrak p R_{\mathfrak p}$ the
residue field of $R$ at $\mathfrak p$. Consider the two left derived functors 
\[ -_{\mathfrak p}:=-\otimes_R R_{\mathfrak p}\colon \D(R)\longrightarrow \D(R)\qquad\text{and}\qquad
	-\otimes^\mathbb{L} k(\mathfrak{p})\colon \D(R)\longrightarrow \D(R),\]
Since $R_{\mathfrak p}$ is a flat $R$-module, for a complex $X$, $X_\mathfrak{p}:=X\otimes_R
R_{\mathfrak p}$ is the componentwise localisation of $X$ as an object of $\D(R)$. In particular, we
have $H^i(X_{\mathfrak p})\simeq H^i(X)_{\mathfrak p}$ for all $i$ in $\mathbb{Z}$.
\begin{dfn}
Let $R$ be a commutative noetherian ring. Given a complex $X$ in $\D(R)$, we define the following
subsets of $\Spec(R)$:
\begin{itemize} 
\item $\supp(X):=\{\mathfrak p\in\Spec(R)\colon X\otimes_R^{\mathbb L}k(\mathfrak p)\neq 0\}$,
	called the (small) \textbf{support} of $X$;
\item $\Supp(X):=\{\mathfrak p\in\Spec(R)\colon X_{\mathfrak p}\neq 0\}$, called the \textbf{big
	support} of $X$.
\end{itemize}
The (big) support of a subcategory $\Xcal$ of $\D(R)$ is the union of the (big) supports of the
objects in $\Xcal$. Since localisation at $\mathfrak p$ commutes with standard
cohomology, as noticed above, $\Supp(X)=\Supp(\coprod H^i(X))$.  This set is
therefore also called the \textbf{homological support} of $X$.
\end{dfn}

Recall that there is a bijection between $\Spec(R)$ and the set of isoclasses of indecomposable
injective $R$-modules (Matlis' Theorem). This assignment is given by sending a prime ideal
$\mathfrak{p}$ to the injective envelope of $R/\mathfrak{p}$ --- which we denote by
$E(R/\mathfrak{p})$. Moreover, since $R$ is noetherian, every injective $R$-module is a coproduct of
copies of such indecomposable injectives. The following lemma gathers some well-known statements
about support that we will use later.

\begin{lemma}\label{basics supp} Let $\mathfrak{p}$ be a prime ideal of a commutative noetherian ring $R$.
\begin{enumerate}
\item\begin{enumerate}
\item $\supp(k(\mathfrak p))=\{\mathfrak p\}=\supp(E(R/\mathfrak p))$;
\item $\supp(R/\mathfrak p)=\spcl(\mathfrak p)$;
\item $\supp(R_\mathfrak{p})=\gncl(\mathfrak p)$;
\end{enumerate}
\item For any $X$ in $\D(R)$, $\Supp(X)$ is specialisation closed;
\item\cite[pag.~158]{Foxby} For any $X$ in $\D(R)$, $\supp(X)\subseteq\Supp(X)$;
\item\cite[Proposition 2.8/Remark 2.9]{Foxby}\cite[Proposition 2.1/Remark 2.2]{CI} For any
	bounded below $X$ in $\D^{+}(R)$,
	$\supp(X)$ coincides with the set of prime ideals $\mathfrak{p}$ for which the module
	$E(R/\mathfrak{p})$ is a summand of a module appearing in the minimal K-injective resolution of $X$;
\item For any bounded below $X$ in $\D^+(R)$, we have $\spcl(\supp(X))=\Supp(X)$.
\end{enumerate}
\end{lemma}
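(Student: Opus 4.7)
The inclusion $\spcl(\supp(X))\subseteq\Supp(X)$ is formal: item (3) of the lemma gives $\supp(X)\subseteq\Supp(X)$, and item (2) says $\Supp(X)$ is specialisation-closed, so it must contain the specialisation closure of $\supp(X)$. All of the content lies in the reverse inclusion, which I would prove via the minimal K-injective resolution, exploiting item (4).

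Concretely, let $I^\bullet$ be the minimal K-injective resolution of $X$. Since $X$ is bounded below and $R$ is noetherian, each term $I^n$ decomposes as a coproduct of copies of modules of the form $E(R/\mathfrak p)$, and by item (4) a prime $\mathfrak p$ lies in $\supp(X)$ precisely when $E(R/\mathfrak p)$ appears as a summand in at least one $I^n$. The standard computation of localisations of indecomposable injectives over a commutative noetherian ring yields
\[
E(R/\mathfrak p)_{\mathfrak q}\;=\;\begin{cases} E(R/\mathfrak p) & \text{if }\mathfrak p\subseteq\mathfrak q,\\ 0 & \text{otherwise,}\end{cases}
\]
which I will use as the key computational input.

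To finish, I argue by contrapositive: suppose $\mathfrak q\notin\spcl(\supp(X))$, i.e.\ no $\mathfrak p\in\supp(X)$ satisfies $\mathfrak p\subseteq\mathfrak q$. Then every indecomposable summand $E(R/\mathfrak p)$ appearing anywhere in $I^\bullet$ satisfies $\mathfrak p\not\subseteq\mathfrak q$, and by the displayed formula each such summand localises to zero at $\mathfrak q$. Hence $(I^\bullet)_{\mathfrak q}=0$ as a complex, and since localisation is exact we have $X_{\mathfrak q}\simeq (I^\bullet)_{\mathfrak q}=0$ in $\D(R_{\mathfrak q})$, so $\mathfrak q\notin\Supp(X)$.

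The only point requiring care is that we are working at the level of complexes rather than individual modules; but because the argument only uses the direction $(I^\bullet)_{\mathfrak q}=0 \Rightarrow X_{\mathfrak q}=0$, no minimality of the localised resolution is needed and exactness of $-\otimes_R R_{\mathfrak q}$ suffices. Thus the main (and essentially only) obstacle is the explicit description of $\supp(X)$ through a minimal K-injective resolution, and this is precisely what item (4) provides.
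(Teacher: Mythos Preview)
Your proof is correct and follows essentially the same route as the paper: both obtain the easy inclusion from items (2) and (3), and for the reverse inclusion both pass to the minimal $K$-injective resolution via item (4) and use that $E(R/\mathfrak p)_{\mathfrak q}=0$ whenever $\mathfrak p\not\subseteq\mathfrak q$ (the paper phrases this as $\Supp(E(R/\mathfrak p))=\spcl(\mathfrak p)$, citing \cite{BIK}). The contrapositive argument and the concluding observation that exactness of localisation suffices are exactly what the paper does.
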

\begin{proof}
We prove items (2) and (5) for the sake of completeness
(see \cite[Lemma 2.2]{BIK} for the analogous statements for modules).
For (2), note that the claim follows from \cite[Lemma 2.1]{BIK}, taking into account that the big support of a complex coincides with the union of the big supports of its cohomologies. 

For (5), notice that by (2) and (3) we already have that $\spcl(\supp(X))\subseteq\Supp(X)$. For the
converse inclusion,
let $X\to E(X)$ denote a minimal $K$-injective resolution of $X$ in
$\D(R)$ and let $\mathfrak{q}$ be a prime ideal of $R$.  Then $\mathfrak{q}$ is not in
$\spcl(\supp X)$ if and only if $\mathfrak{q}$ is not in $\spcl(\mathfrak{p})$ for any
$\mathfrak{p}$ in $\mathsf{supp}(X)$, i.e. if and only if, by \cite[Lemma 2.2]{BIK},
$\mathfrak{q}$ is not in $\spcl(\mathfrak{p})=\Supp(E(R/\mathfrak{p}))$ for any
$\mathfrak{p}$ in $\mathsf{supp}(X)$. Hence, by item (4), if $\mathfrak{q}$ does not lie in
$\spcl(\supp X)$ then $E(X)_\mathfrak{q}=0$ (or, equivalently, $X_\mathfrak{q}=0$)
in $\D(R)$.
\end{proof}

For a subset $\Pcal\subseteq\Spec(R)$, we write $\supp^{-1}(\Pcal)$ (respectively,
$\Supp^{-1}(\Pcal))$ for the subcategory of $\D(R)$ whose objects have small (respectively,
big) support contained in $\Pcal$. By item (3) above, $\Supp^{-1}(\Pcal)$ is contained in
$\supp^{-1}(\Pcal)$. If $\Pcal$ is specialisation-closed, then item (5) of the lemma above guarantees
that a bounded below complex belongs to $\supp^{-1}(\Pcal)$ if and only if it belongs to
$\Supp^{-1}(\Pcal)$, i.e.
\begin{equation}\label{small vs big}
\Pcal=\spcl(\Pcal)\quad\Longrightarrow\quad \supp^{-1}(\Pcal)\cap \D^+(R)=\Supp^{-1}(\Pcal)\cap \D^+(R)
\end{equation}
\subsection{Localising subcategories}
Hereditary torsion classes of $\Mod(R)$ are also called \textbf{localising subcategories}.
There is a well-known bijection (holding, in fact, for any ring) between localising subcategories of
$\Mod(R)$ and \textbf{Giraud subcategories} of $\Mod(R)$, i.e. subcategories whose
inclusion functor admits an exact left adjoint (\cite{St}). This bijection associates a localising subcategory
$\Tcal$ of $\Mod(R)$ to the Giraud subcategory $\Tcal^{\perp_{0,1}}$, whose objects (we
recall) are those $R$-modules $X$ such that for any $T$ in $\Tcal$
\[\Hom_R(T,X)=0=\Ext^1_R(T,X).\]

Localising subcategories of $\mathsf{Mod}(R)$ are completely characterised by their support as follows.

\begin{thm}\label{review torsion}
Let $R$ be a commutative noetherian ring. Then the following statements hold.
\begin{enumerate}
\item\cite{G} The assignment of support yields a bijection between localising subcategories of
	$\mathsf{Mod}(R)$ and specialisation-closed subsets of $\Spec(R)$. 
\item For a localising subcategory $\Tcal$ of $\mathsf{Mod}(R)$ we have
\begin{enumerate}
\item a prime $\mathfrak{p}$ lies in $\supp(\Tcal)$ if and only if $k(\mathfrak{p})$ lies in $\Tcal$;
\item for any $\mathfrak{p}$ in $\Spec(R)$, then $k(\mathfrak{p})$ lies in either $\Tcal$ or in $\Tcal^{\perp_{0,1}}$;
\item $\Tcal^\perp=\{M\in\mathsf{Mod}(R)\colon \mathsf{Ass}(M)\cap
	\supp(\Tcal)=\emptyset\}=\mathsf{Cogen}(\mathsf{supp}^{-1}(\supp(\Tcal)^\co)\cap \mathsf{Mod}(R))$.
\end{enumerate}
\item\cite[Lemma 4.2]{AH} A torsion pair in $\mathsf{Mod}(R)$ is hereditary if and only if it is of finite type.
\end{enumerate}
\end{thm}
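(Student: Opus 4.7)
My plan is to dispatch items (1) and (3) by direct citation --- (1) is Gabriel's classical theorem \cite{G}, and (3) is \cite[Lemma 4.2]{AH} --- and to focus the work on item (2). Throughout I would rely on the following consequence of (1): $\Tcal = \{M \in \Mod(R) : \Supp(M) \subseteq \supp(\Tcal)\}$, which (since $\supp(\Tcal)$ is specialisation-closed) is equivalent to $M \in \Tcal$ iff $\supp(M) \subseteq \supp(\Tcal)$.

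Item (2)(a) would then follow at once: by Lemma \ref{basics supp}(1)(a), $\supp(k(\mathfrak{p})) = \{\mathfrak{p}\}$, so $k(\mathfrak{p})$ lies in $\Tcal$ precisely when $\mathfrak{p}$ lies in $\supp(\Tcal)$. For item (2)(b), assuming $\mathfrak{p} \notin \supp(\Tcal)$, the key observation is that every $T \in \Tcal$ satisfies $T_\mathfrak{p} = 0$ (because $\mathfrak{p} \notin \Supp(T)$). Since $k(\mathfrak{p})$ is naturally an $R_\mathfrak{p}$-module and $R \to R_\mathfrak{p}$ is flat, I would then invoke the standard base-change isomorphism
\[\Ext^i_R(T, k(\mathfrak{p})) \cong \Ext^i_{R_\mathfrak{p}}(T_\mathfrak{p}, k(\mathfrak{p})) = 0\]
valid for all $i \geq 0$, yielding $k(\mathfrak{p}) \in \Tcal^{\perp_{0,1}}$.

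For item (2)(c), setting $\Pcal := \supp(\Tcal)^\co$, I would prove the two equalities separately. The first uses item (2)(a): the existence of $\mathfrak{p} \in \Ass(M) \cap \supp(\Tcal)$ is exactly the existence of an embedding $R/\mathfrak{p} \hookrightarrow M$ with $R/\mathfrak{p} \in \Tcal$; conversely, if $\Ass(M) \cap \supp(\Tcal) = \emptyset$, the image $N$ of any morphism $T \to M$ with $T \in \Tcal$ is a submodule of $M$ lying in $\Tcal$, so $\Ass(N) \subseteq \Ass(M) \cap \supp(\Tcal) = \emptyset$ and hence $N = 0$. For the second equality, the forward direction would follow from the Matlis decomposition $E(M) \cong \bigoplus_{\mathfrak{p} \in \Ass(M)} E(R/\mathfrak{p})^{(I_\mathfrak{p})}$, whose summands have small support $\{\mathfrak{p}\} \subseteq \Pcal$. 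The reverse direction is more delicate: given $M \hookrightarrow \prod_\alpha N_\alpha$ with $\supp(N_\alpha) \subseteq \Pcal$ and $x = (x_\alpha) \in M$ with $\Ann(x) = \mathfrak{p}$ prime, I would bound the support of each cyclic $R \cdot x_\alpha$ by $\spcl(\mathfrak{p}) \cap \supp(N_\alpha) \subseteq \spcl(\mathfrak{p}) \cap \Pcal$ and use the specialisation-closedness of $\Pcal^\co = \supp(\Tcal)$ to force this intersection to be empty when $\mathfrak{p} \notin \Pcal$, contradicting $x \neq 0$.

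The main obstacle will be this reverse direction of the second equality in (2)(c): since $\Ass$ does not commute with arbitrary products in general, one cannot naively bound $\Ass$ of the product in terms of $\Ass$ of the factors, and so any argument by projection is doomed. The rescuing input is the specialisation-closed structure of $\supp(\Tcal)$, which produces the empty intersection $\spcl(\mathfrak{p}) \cap \Pcal = \emptyset$ for $\mathfrak{p} \notin \Pcal$ and forestalls the appearance of any unexpected associated primes in the product.
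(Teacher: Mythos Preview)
Your proposal is correct, with one imprecision worth flagging. The arguments you give for (2)(b) and (2)(c) differ from the paper's in instructive ways.

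For (2)(b), the paper observes (via Lemma~\ref{basics supp}) that the minimal injective resolution of $k(\mathfrak{p})$ consists entirely of coproducts of copies of $E(R/\mathfrak{p})$, and that any $T\in\Tcal$ admits nonzero maps only into indecomposable injectives $E(R/\mathfrak{q})$ with $\mathfrak{q}\in\supp(\Tcal)$. Your flat base-change argument is equally valid and arguably more self-contained; the paper's route has the virtue of staying within the language of supports and injective resolutions already set up in the section.

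For (2)(c), the paper takes a one-line structural path: since $\Tcal$ is localising, $\Tcal^\perp$ is closed under injective envelopes and is therefore cogenerated by the indecomposable injectives it contains, which are precisely the $E(R/\mathfrak{p})$ with $\mathfrak{p}\notin\supp(\Tcal)$. Your hands-on argument also works, but the step ``bound the support of $R\cdot x_\alpha$ by $\spcl(\mathfrak{p})\cap\supp(N_\alpha)$'' is not justified as written: neither $\Supp$ nor $\supp$ of a submodule is bounded above by the \emph{small} support of the ambient module. The easy fix is to bound $\Ass(R\cdot x_\alpha)$ instead --- it lies in $\spcl(\mathfrak{p})$ (as $R\cdot x_\alpha$ is a quotient of $R/\mathfrak{p}$) and in $\Ass(N_\alpha)\subseteq\supp(N_\alpha)\subseteq\Pcal$, so $\Ass(R\cdot x_\alpha)=\emptyset$ and hence $x_\alpha=0$ whenever $\mathfrak{p}\notin\Pcal$. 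Alternatively, once you have the first equality the reverse inclusion is immediate: $\supp(N_\alpha)\subseteq\Pcal$ forces $\Ass(N_\alpha)\subseteq\Pcal$, so each $N_\alpha$ already lies in $\Tcal^\perp$, which is closed under products and subobjects.
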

\begin{proof}
These are well-known statements; we comment on those for which we could not find a direct reference, for sake of readability. The assertion (2)(a) follows from (1)
and Lemma \ref{basics supp}. For (2)(b), one needs to check that if $\mathfrak{p}$ does not lie in
$\supp(\Tcal)$, then $\Hom_R(T,k(\mathfrak{p}))=\mathsf{Ext}^1_R(T,k(\mathfrak{p)})=0$ for all $T$ in $\Tcal$. By
Lemma \ref{basics supp}, every injective module in the minimal injective resolution of
$k(\mathfrak{p)}$ is a coproduct of copies of $E(R/\mathfrak{p})$. The statement now follows from
the fact that $T$ has only maps to injective modules of the form $E(R/\mathfrak{q})$, with
$\mathfrak{q}$ in $\supp(\Tcal)$. Finally, (2)(c) follows from the fact that, since
$\Tcal$ is localising, $\Tcal^\perp$ is closed under injective envelopes and, thus, the torsion pair
is cogenerated by a set of injectives. Clearly, these are the ones not supported in
$\supp(\Tcal)$, as wanted.
\end{proof}

In the derived category, we can also parametrise certain subcategories in terms of their supports.

\begin{dfn}
Let $\Dcal$ be a triangulated category admitting arbitrary (set-indexed) coproducts. A subcategory
$\Lcal$ of $\Dcal$ is said to be \textbf{localising} if $\Lcal$ is a triangulated subcategory closed
under coproducts. A localising subcategory $\Lcal$ is furthermore said to be \textbf{smashing} if
$\Lcal^\perp$ is closed under coproducts.
\end{dfn}

\begin{thm}\cite{N}\label{Neeman}
Let $R$ be a commutative noetherian ring. Then, the following statements hold.
\begin{enumerate}
\item The assignment of support yields a bijection between localising subcategories of $\D(R)$ and
the power set of $\Spec(R)$. Moreover, this bijection restricts to a bijection between smashing
subcategories of $\D(R)$ and the set of specialisation-closed subsets of $\Spec(R)$. 
\item For a localising subcategory $\Lcal$ of $\D(R)$ we have 
\begin{enumerate} 
\item a prime $\mathfrak{p}$ lies in $\supp(\Lcal)$ if and only if $k(\mathfrak{p})$ lies in $\Lcal$;
\item for any $\mathfrak{p}$ in $\Spec(R)$, then $k(\mathfrak{p})$ lies either in $\Lcal$ or in $\Lcal^{\perp}$;
\item $\Lcal$ is the smallest localising subcategory containing $\{k(\mathfrak{p})\colon
	\mathfrak{p}\in \supp(\Lcal)\}$;
\item $(\Lcal,\Lcal^{\perp})$ is a $t$-structure.
\end{enumerate}
\end{enumerate}
\end{thm}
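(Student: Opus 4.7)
My plan is to prove part (2) first, then deduce part (1) from it. The argument rests on two complementary principles about the residue fields: a \emph{generation principle} (any $X$ with $\mathfrak{p}\in\supp(X)$ forces $k(\mathfrak{p})\in\langle X\rangle_{\mathrm{loc}}$) and an \emph{orthogonality principle} (distinct residue fields lie in orthogonal localising pieces).

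For the generation principle, given $\mathfrak{p}\in\supp(X)$ I would first show $X_{\mathfrak{p}}=X\otimes_R R_{\mathfrak{p}}\in\langle X\rangle_{\mathrm{loc}}$, by writing $R_{\mathfrak{p}}$ as a filtered colimit of the $R[1/f]$'s ($f\notin\mathfrak{p}$) and each $R[1/f]$ as the directed homotopy colimit of $R\xrightarrow{f}R\xrightarrow{f}\cdots$. Then, since $\mathfrak{p}R_{\mathfrak{p}}$ is finitely generated (noetherian hypothesis), $k(\mathfrak{p})$ is built from $R_{\mathfrak{p}}$ by an iterated Koszul construction, whence $X_{\mathfrak{p}}\otimes^{\mathbb{L}}_{R_{\mathfrak{p}}}k(\mathfrak{p})\in\langle X\rangle_{\mathrm{loc}}$. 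This object is nonzero by hypothesis; being a complex of $k(\mathfrak{p})$-vector spaces over a field, it splits as a coproduct of shifts of $k(\mathfrak{p})$, forcing $k(\mathfrak{p})\in\langle X\rangle_{\mathrm{loc}}$. The orthogonality principle reduces to $\Hom_{\D(R)}(k(\mathfrak{q}),k(\mathfrak{p})[n])=0$ for $\mathfrak{p}\neq\mathfrak{q}$ and all $n$: an element $f$ in the symmetric difference $\mathfrak{p}\triangle\mathfrak{q}$ acts invertibly on one residue field and as zero on the other, so every Hom-group is simultaneously $f$-divisible and $f$-annihilated, hence zero.

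With these in hand: (2)(a) combines the generation principle with Lemma \ref{basics supp}(1); (2)(c) is (a) plus the generation principle applied to every $X\in\Lcal$; (2)(b) combines (a) and (c) with the orthogonality principle (if $\mathfrak{p}\notin\supp\Lcal$, then $k(\mathfrak{p})$ is orthogonal to every generator $k(\mathfrak{q})$ of $\Lcal$); (2)(d) follows from Brown representability in the compactly generated category $\D(R)$, which ensures any localising subcategory is coreflective, i.e.\ the aisle of a $t$-structure. For (1), injectivity of the support assignment on localising subcategories is (2)(c), and surjectivity follows since $\langle k(\mathfrak{p}):\mathfrak{p}\in\Pcal\rangle_{\mathrm{loc}}$ has support $\Pcal$ for every $\Pcal\subseteq\Spec(R)$ (orthogonality excludes extraneous primes). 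For the smashing refinement: given specialisation-closed $V$, the subcategory $\{X:\Supp X\subseteq V\}$, which by Lemma \ref{basics supp}(5) agrees with $\{X:\supp X\subseteq V\}$, is smashing because its right orthogonal is the Bousfield localisation at $V^{\co}$ and commutes with coproducts. Conversely, when $\Lcal$ is smashing, the coproduct-closure of $\Lcal^{\perp}$ combined with Lemma \ref{basics supp}(2) forces $\supp\Lcal$ to be specialisation-closed.

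The main obstacle, I expect, is the generation principle, specifically the two delicate steps: verifying that the homotopy-colimit and Koszul-tensoring operations stay inside $\langle X\rangle_{\mathrm{loc}}$, and the final decomposition of an arbitrary $k(\mathfrak{p})$-complex in $\D(R)$ into shifted copies of $k(\mathfrak{p})$ (which uses essentially that $k(\mathfrak{p})$ is a field). The smashing correspondence is comparatively soft once the support dictionary is in place, as it becomes a formal statement about Bousfield localisations at finitely generated ideals.
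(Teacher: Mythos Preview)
The paper does not supply its own proof of this theorem; it is quoted from \cite{N} as background. So there is nothing to compare against, but your outline has a genuine gap that is worth naming.

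Your ``generation principle'' proves the implication in the wrong direction for (2)(c). What you establish is that if $\mathfrak{p}\in\supp(X)$ then $k(\mathfrak{p})\in\langle X\rangle_{\mathrm{loc}}$; this certainly gives one inclusion $\langle k(\mathfrak{p}):\mathfrak{p}\in\supp(\Lcal)\rangle_{\mathrm{loc}}\subseteq\Lcal$. But (2)(c) asks for the reverse containment: that every $X\in\Lcal$ can be built from the residue fields in $\supp(\Lcal)$. Nothing in your two principles yields this. It is precisely the deep content of Neeman's theorem, and its proof requires a separate ingredient you have not mentioned: either a detection statement (if $X\otimes^{\mathbb{L}}k(\mathfrak{p})=0$ for all $\mathfrak{p}$ then $X=0$, hence the residue fields jointly generate $\D(R)$) together with a Bousfield-localisation argument, or an inductive reconstruction of $X$ from its local pieces $\Gamma_{\mathfrak{p}}X_{\mathfrak{p}}$ via a filtration of $\Spec(R)$. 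Your Koszul/localisation machinery is exactly what is needed to build those local pieces, but you then have to assemble them back into $X$, and that step is absent.

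This gap propagates. Your argument for (2)(b) explicitly invokes (2)(c) to reduce to the residue-field generators, and your argument for (2)(d) appeals to Brown representability, which only applies once you know $\Lcal$ is set-generated---again a consequence of (2)(c). Likewise, injectivity in (1) is stated to be (2)(c). So the missing reconstruction step is load-bearing throughout. The sketch for the smashing correspondence is also thin: closure of $\Lcal^{\perp}$ under coproducts does not by itself interact with Lemma~\ref{basics supp}(2) to force $\supp(\Lcal)$ specialisation-closed; one standard route is to show that a smashing localising subcategory of $\D(R)$ is automatically a tensor ideal (since $\Lcal^{\perp}$ is then triangulated, coproduct-closed and contains $R$-local objects), and then argue via $R/\mathfrak{p}\otimes^{\mathbb L}-$ that $\supp(\Lcal)$ is stable under specialisation.
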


Notice the parallel, albeit with some subtle differences, between the abelian and the derived
classification results. The following result summarises the relation between the two theorems above.

\begin{prop}
Let $R$ be a commutative noetherian ring, $V$ a specialisation-closed subset of $\Spec(R)$ and
$\Lcal=\supp^{-1}(V)$ the associated smashing subcategory of $\D(R)$. Then the localising
subcategory $\Tcal$ of $\mathsf{Mod}(R)$ associated to $V$ is $\Lcal\cap \mathsf{Mod}(R)$ and,
moreover, $\Tcal^{\perp_{\geq 0}}=\Lcal^{\perp}\cap \mathsf{Mod}(R)$.
\end{prop}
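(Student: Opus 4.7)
For the first equality, my plan is to exhibit $\Lcal\cap\mathsf{Mod}(R)$ as a localising subcategory of $\mathsf{Mod}(R)$ with support $V$, and then invoke the uniqueness in the Gabriel bijection (Theorem~\ref{review torsion}(1)). Since $V$ is specialisation-closed, the implication~\eqref{small vs big} identifies $\Lcal\cap\mathsf{Mod}(R)=\{M\in\mathsf{Mod}(R)\colon\Supp M\subseteq V\}$, and this latter class is closed under subobjects, quotients, extensions and coproducts by the exactness and coproduct-preservation of localisation; hence it is a localising subcategory of $\mathsf{Mod}(R)$, and it contains $k(\mathfrak{p})$ for every $\mathfrak{p}\in V$ (Lemma~\ref{basics supp}(1)(a)), so its support is exactly $V$. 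Gabriel's bijection then forces $\Tcal=\Lcal\cap\mathsf{Mod}(R)$.

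For the second equality, I would first note that, since $\Lcal$ is closed under shifts in $\D(R)$, one has $\Lcal^\perp=\{X\in\D(R)\colon\Hom_{\D(R)}(L,X[k])=0\text{ for all }L\in\Lcal,\ k\in\mathbb{Z}\}$, and that for modules $T,M$ there is an identification $\Hom_{\D(R)}(T,M[k])=\Ext^k_R(T,M)$ when $k\geq 0$, vanishing when $k<0$. The inclusion $\Lcal^\perp\cap\mathsf{Mod}(R)\subseteq\Tcal^{\perp_{\geq 0}}$ is then immediate from $\Tcal\subseteq\Lcal$, which is provided by the first part.

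For the converse inclusion, given $M\in\Tcal^{\perp_{\geq 0}}$, I would introduce
\[\Lcal':=\{L\in\D(R)\colon\Hom_{\D(R)}(L,M[k])=0\text{ for all }k\in\mathbb{Z}\},\]
which is easily checked to be a localising subcategory of $\D(R)$ (it is triangulated and closed under coproducts). For each $\mathfrak{p}\in V$, Theorem~\ref{review torsion}(2)(a) gives $k(\mathfrak{p})\in\Tcal$, whence $\Ext^k_R(k(\mathfrak{p}),M)=0$ for $k\geq 0$, while the vanishing for $k<0$ is automatic since $k(\mathfrak{p})$ and $M$ both lie in degree~$0$. Thus $k(\mathfrak{p})\in\Lcal'$ for every $\mathfrak{p}\in V$, and Theorem~\ref{Neeman}(2)(c) then forces $\Lcal\subseteq\Lcal'$, i.e.\ $M\in\Lcal^\perp$. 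The main (indeed, only) obstacle is concentrated in this last step, which crucially exploits that $\Lcal$ is generated as a localising subcategory of $\D(R)$ precisely by the residue fields $k(\mathfrak{p})$ with $\mathfrak{p}\in V$.
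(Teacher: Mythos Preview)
Your proof is correct. The first equality follows cleanly from equation~\eqref{small vs big} and Gabriel's bijection, and for the second equality your localising-subcategory trick with $\Lcal'$ together with Theorem~\ref{Neeman}(2)(c) is exactly the right idea: reducing the orthogonality condition to the generating residue fields is what makes the converse inclusion work.

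As for comparison with the paper: there is nothing to compare against. The paper states this proposition without proof, presenting it as a summary of the relation between Gabriel's classification (Theorem~\ref{review torsion}) and Neeman's classification (Theorem~\ref{Neeman}). Your argument is precisely the kind of verification the authors had in mind and left implicit.
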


\begin{rmk}
Note that for a hereditary torsion class in $\mathsf{Mod}(R)$, it can be shown that
$\Tcal^{\perp_{\geq 0}}=\Tcal^{\perp_{0,1}}$ if and only if $\Tcal$ is a perfect torsion class, i.e.
if and only if the associated Gabriel topology is perfect. In \S \ref{sub:perfect} we will prove
this fact and  make use
of it to obtain a complete classification of hereditary torsion pairs in the HRS-tilt of
$\Mod(R)$ with respect to a perfect torsion pair.
\end{rmk}

\subsection{$t$-structures}

There is a wealth of knowledge about some classes of $t$-structures in the derived category of a
commutative noetherian ring. Once again subsets of the spectrum play a crucial role in the
characterisation of (compactly generated) $t$-structures.

\begin{dfn}
Let $R$ be a commutative noetherian ring. A function $\varphi$ from $\mathbb{Z}$ to the power set of
$\Spec(R)$ is said to be an \textbf{sp-filtration of $\Spec(R)$} if $\varphi$ is
a decreasing function between posets (i.e. if for all integers $n$, $\varphi(n)\supseteq \varphi(n+1)$)
and $\varphi(n)$ is specialisation-closed, for all $n$.
\end{dfn}

The following theorem concerns the classification and properties of compactly generated
$t$-structures over commutative noetherian rings.

\begin{thm}\label{t-st comm}\cite[Theorem 4.10]{AJS} \cite[Theorem 1.1]{HN}
Let $R$ be a commutative noetherian ring. The following are equivalent for a nondegenerate
$t$-structure $\mathbb{T}=(\Ucal,\Vcal)$ in $\D(R)$.
\begin{enumerate}
\item $\mathbb{T}$ is compactly generated;
\item $\mathbb{T}$ is smashing with Grothendieck heart (and the equivalent conditions of
Theorem \ref{thm:$t$-structure-types});
\item There is an sp-filtration of $\Spec(R)$ for which 
	\begin{align*}
		\Ucal&=\{X\in \D(R)\colon \Supp(H^0(X[n]))\subseteq \varphi(n), \forall n\in\mathbb{Z}\}\\
		\Vcal&=\{X\in \D(R)\colon \mathbb{R} \Gamma_{\varphi(n)}(X)\in\Dbb^{\geq n+1},
		\forall n\in\mathbb{Z}\},
	\end{align*}
	where $\Gamma_V$ denotes the (left exact) torsion radical of the hereditary torsion pair
	$(\Supp^{-1}(V),\Fcal_V)$ of $\Mod(R)$, for a specialisation-closed set $V$ (see Theorem \ref{review
	torsion}(1)).
\end{enumerate}
\end{thm}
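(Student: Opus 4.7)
The plan is to establish the three equivalent conditions via a triangle of implications: (1) $\Rightarrow$ (2), (1) $\Leftrightarrow$ (3), and (2) $\Rightarrow$ (1). The first is immediate from the last sentence of Theorem~\ref{thm:$t$-structure-types}, since every compactly generated $t$-structure is homotopically smashing and hence smashing with Grothendieck heart. The genuinely hard step will be (2) $\Rightarrow$ (1), a telescope-type statement from \cite{HN} requiring the commutative noetherian hypothesis in an essential way.

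For (1) $\Rightarrow$ (3), I would start with a compact generating set $\Scal \subseteq \Ucal$, enlarged without loss of generality to be closed under positive shifts (i.e.\ replace $\Scal$ by $\bigcup_{k \geq 0}\Scal[k]$), and define
\[
\varphi(n) := \bigcup_{S \in \Scal} \Supp(H^0(S[n])).
\]
Each $\varphi(n)$ is specialisation-closed by Lemma~\ref{basics supp}(2), and the closure of $\Scal$ under $[1]$ forces $\varphi(n+1) \subseteq \varphi(n)$ (rewriting $H^0(S[n+1]) = H^0(S[1][n])$), so $\varphi$ is an sp-filtration. The description of $\Vcal$ then falls out of the adjunction between $\mathbb{R}\Gamma_{\varphi(n)}$ and the inclusion $\Supp^{-1}(\varphi(n)) \hookrightarrow \D(R)$: the vanishing $\Hom_{\D(R)}(S[-n], X) = 0$ for all $S \in \Scal$ translates to $\mathbb{R}\Gamma_{\varphi(n)}(X) \in \Dbb^{\geq n+1}$. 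Conversely, for (3) $\Rightarrow$ (1), given an sp-filtration $\varphi$ I would produce a compact generating set by taking Koszul complexes $K(I)[-n]$ for each $n \in \mathbb{Z}$ and each finitely generated ideal $I \leq R$ with $\spcl(I) \subseteq \varphi(n)$; Theorem~\ref{review torsion} ensures that cyclic modules $R/I$ generate the appropriate torsion class in $\Mod(R)$, and their Koszul replacements serve as compact stand-ins at the derived level.

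The main obstacle, (2) $\Rightarrow$ (1), is the content of \cite[Theorem~1.1]{HN}. My strategy is to set $\Scal := \Ucal \cap \D(R)^c$ and consider the compactly generated $t$-structure $\Tbb' = (\Ucal', \Scal^\perp)$, automatically satisfying $\Ucal' \subseteq \Ucal$; the task is then the reverse inclusion. I would leverage the homotopically smashing property of $\Tbb$ (granted by Theorem~\ref{thm:$t$-structure-types}) to reduce the coaisle to its interaction with compacts, and then invoke Neeman's bijection (Theorem~\ref{Neeman}) between smashing subcategories and specialisation-closed subsets of $\Spec(R)$ to match the sp-filtrations attached to $\Tbb$ and $\Tbb'$ degree by degree. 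The technical crux, and the step I expect to be most delicate, is to extract from the Grothendieck heart hypothesis enough control over each cohomological layer of $\Ucal$ to conclude that nothing is missed by only testing against compacts; this is precisely where the fine structure of $\Mod(R)$ governed by the spectrum — via Theorems~\ref{review torsion} and~\ref{Neeman} — is indispensable.
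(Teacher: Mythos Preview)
The paper does not actually prove this theorem: it is stated with citations to \cite[Theorem~4.10]{AJS} and \cite[Theorem~1.1]{HN} and no proof is given in the text. There is therefore nothing in the paper to compare your proposal against directly.

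That said, your outline is broadly faithful to the structure of the cited results. The equivalence (1)~$\Leftrightarrow$~(3) is indeed the content of \cite{AJS}, and your sketch of (3)~$\Rightarrow$~(1) via Koszul complexes on generators of the ideals defining each $\varphi(n)$ is the standard argument there. The implication (2)~$\Rightarrow$~(1) is precisely \cite[Theorem~1.1]{HN}, and you are right that this is the substantial step; your description of the strategy (comparing $\Tbb$ with the compactly generated $t$-structure on $\Ucal\cap\D(R)^c$ and matching sp-filtrations) is a reasonable summary of the approach in that reference, though the actual execution in \cite{HN} requires a careful analysis of how residue fields sit in aisles of homotopically smashing $t$-structures, which is more intricate than your final paragraph suggests.

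One small correction to your sketch of (1)~$\Rightarrow$~(3): defining $\varphi(n)$ from the supports of cohomologies of the compact generators is not quite how \cite{AJS} proceeds, and your proposed verification that $\Vcal = \{X : \mathbb{R}\Gamma_{\varphi(n)}(X)\in\Dbb^{\geq n+1}\}$ is underspecified --- the adjunction you invoke does not by itself convert $\Hom(S[-n],X)=0$ for $S\in\Scal$ into the claimed condition on $\mathbb{R}\Gamma_{\varphi(n)}(X)$ without further argument identifying the aisle with the displayed description of $\Ucal$ first. In \cite{AJS} the filtration is instead defined directly from the aisle as $\varphi(n)=\Supp(H^n(\Ucal))$, and both descriptions in (3) are then verified from this.
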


It is easy to see that a $t$-structure as in the theorem above is nondegenerate if and only if
the associated sp-filtration $\varphi$ satisfies $\cup_{n\in\mathbb{Z}}\varphi(n)=\Spec(R)$ and
$\cap_{n\in\mathbb{Z}}\varphi(n)=\emptyset$. Moreover, such a $t$-structure is intermediate if and
only if there are integers $a< b$ such that $\varphi(a)=\Spec(R)$ and
$\varphi(b)=\emptyset$. In this case $\varphi$ will therefore be called \textbf{intermediate}.

Combining the results above, we observe the following useful statement.

\begin{cor}\label{cor:finite-types}
Let $R$ be a commutative notherian ring, and $\Hcal_\Tbb$ be the heart of a nondegenerate compactly generated
$t$-structure $\Tbb$ in $\D(R)$. Then, a torsion pair $\mathbf{t}=(\Tcal,\Fcal)$ in $\Hcal_\Tbb$ is of finite
type if and only if it is generated by a set of finitely presented objects of $\Hcal_\Tbb$.
\end{cor}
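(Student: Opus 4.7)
The plan is to handle the two directions separately, the ``if'' being immediate and the ``only if'' being a chain of structural results applied to the HRS-tilt of $\mathbf{t}$.

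For the ``if'' direction, this is essentially Lemma \ref{CB+H}(1): if $\Scal\subseteq \fp(\Hcal_\Tbb)$ generates $\mathbf{t}$, then for any direct system $(F_i)$ of objects in $\Fcal=\Scal^\perp$ and any $S\in\Scal$ we have $\Hom(S,\varinjlim F_i)=\varinjlim\Hom(S,F_i)=0$, so $\varinjlim F_i\in\Fcal$. Note that local coherence of $\Hcal_\Tbb$ (which we do not have a priori) is not needed for this direction — only finite presentation of the generators.

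For the ``only if'' direction, assume $\mathbf{t}$ is of finite type. I would first invoke Theorem \ref{Saorin}(1) to deduce that the HRS-tilt $\Tbb_\mathbf{t}$ satisfies the equivalent conditions of Theorem \ref{thm:$t$-structure-types}; in particular $\Tbb_\mathbf{t}$ is smashing with Grothendieck heart $\Hcal_\mathbf{t}$. Next I would check that $\Tbb_\mathbf{t}$ is nondegenerate: from the definition of the HRS-tilt one has $\Tbb_\mathbf{t}^{\leq 0}\subseteq \Ucal$ and $\Tbb_\mathbf{t}^{\geq 1}\subseteq \Vcal[-1]$, so that
\[\bigcap_{n\in\Zbb}(\Tbb_\mathbf{t}^{\leq 0})[n]\subseteq\bigcap_{n\in\Zbb}\Ucal[n]=0,\qquad \bigcap_{n\in\Zbb}(\Tbb_\mathbf{t}^{\geq 1})[n]\subseteq\bigcap_{n\in\Zbb}\Vcal[n]=0,\]
where the vanishings come from the nondegeneracy of $\Tbb$.

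Now I would bring in the commutative noetherian hypothesis: by Theorem \ref{t-st comm}, a nondegenerate $t$-structure in $\D(R)$ is compactly generated if and only if it is smashing with Grothendieck heart. Applying this to the nondegenerate smashing $t$-structure $\Tbb_\mathbf{t}$ with Grothendieck heart, one concludes that $\Tbb_\mathbf{t}$ is itself compactly generated. At this point Theorem \ref{Saorin}(2) applies directly and yields that $\mathbf{t}$ is generated by a set of finitely presented objects of $\Hcal_\Tbb$, as desired.

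I do not anticipate a genuine obstacle here: the argument is a chaining of existing results, with the commutative noetherian hypothesis entering only through Theorem \ref{t-st comm}, which is precisely the bridge from ``smashing with Grothendieck heart'' back to ``compactly generated''. The only non-citation computation is the verification of nondegeneracy of $\Tbb_\mathbf{t}$, which is a short comparison of aisles.
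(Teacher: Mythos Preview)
Your argument is correct and matches the paper's proof essentially line for line: Lemma~\ref{CB+H}(1) for the ``if'' direction, then Theorem~\ref{Saorin}(1), Theorem~\ref{t-st comm}, and Theorem~\ref{Saorin}(2) chained together for the ``only if''. Your explicit verification of the nondegeneracy of $\Tbb_\mathbf{t}$ and your remark that local coherence is not needed for the ``if'' direction are both useful clarifications that the paper leaves implicit.
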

\begin{proof}
By Lemma \ref{CB+H}(1), we only need to prove one implication. If $\mathbf{t}$ is
of finite type in $\Hcal_\Tbb$, Theorem \ref{Saorin}(1) shows that the HRS-tilt of $\Tbb$
with respect to $\mathbf{t}$ is a smashing $t$-structure with a Grothendieck heart and, thus, by Theorem \ref{t-st comm} it is compactly generated. The result then follows from \ref{Saorin}(2).
\end{proof}

\begin{rmk}
Note that in the above corollary, $\Hcal_\Tbb$ is not necessarily locally coherent --- although, as shown
in \cite{SS}, it is locally finitely presented. 
\end{rmk}

\section{Hereditary torsion pairs in Grothendieck hearts} 
In this section we discuss hereditary torsion pairs in a given Grothendieck heart in the derived
category of a commutative noetherian ring. Throughout, once again $R$ will denote a commutative
noetherian ring.

\subsection{A characterisation by support}
We begin by showing that hereditary torsion classes in the heart of a smashing nondegenerate
$t$-structure of $\D(R)$ are completely determined by their support in $\Spec(R)$.

\begin{prop}\label{some properties}
Let $R$ be a commutative noetherian ring and let $\mathbb{T}=(\Ucal,\Vcal)$ be a nondegenerate
$t$-structure in $\D(R)$ with heart $\Hcal_\mathbb{T}$ and cohomological functor $H^0_\mathbb{T}\colon
\D(R)\longrightarrow\Hcal_\mathbb{T}$. If $\mathbf{t}=(\Tcal,\Fcal)$ is a hereditary torsion pair in $\Hcal_\Tbb$,
then:
\begin{enumerate}
	\item for each $\mathfrak{p}$ in $\Spec(R)$, there is an integer $n_\mathfrak{p}$ for which $k(\mathfrak{p})[n_\mathfrak{p}]$ lies in $\Hcal_\Tbb$;
	\item $\supp(\Hcal_\Tbb)=\Spec(R)$;
	\item if $\mathbb{T}$ is, in addition, smashing, then
	\begin{enumerate}
		\item $\Lcal_\mathbf{t}:=\{X\in \D(R)\colon H_\mathbb{T}^0(X[i])\in\Tcal\text{ for every
		}i\in\Zbb\}$ is a localising subcategory of $\D(R)$;
		\item $\supp(\Lcal_\mathbf{t})=\supp(\Tcal)=\{\mathfrak{p}\in\Spec(R)\colon k(\mathfrak{p})[n_\mathfrak{p}]\in\Tcal\}$
		\item $\Lcal_\mathbf{t}$ is the smallest localising subcategory containing $\Tcal$;
		\item $\Lcal_\mathbf{t}^{\perp}\cap\Hcal_\Tbb\subseteq \Tcal^{\perp_{0,1}}$;
		\item for each $\mathfrak{p}$ in $\Spec(R)$, $k(\mathfrak{p})[n_\mathfrak{p}]$ lies
			in $\Tcal$ or $k(\mathfrak{p})[n_\mathfrak{p}]$ lies in the Giraud subcategory
			$\Tcal^{\perp_{0,1}}$;
		\item $\supp^{-1}(\supp(\Tcal))\cap \Hcal_\Tbb=\Tcal$, i.e. $\Tcal$ is completely determined by its support.
	\end{enumerate}
\end{enumerate}
\end{prop}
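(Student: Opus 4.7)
The plan is to deduce (1) and (2) from the Alonso--Jerem\'ias--Saor\'in classification (Theorem \ref{t-st comm}), and to obtain (3) by studying $\Lcal_\mathbf{t}$ as a localising subcategory of $\D(R)$. Reading the hypothesis so that Theorem \ref{t-st comm} applies---which matches the focus of the paper on compactly generated $t$-structures---$\Tbb$ corresponds to an sp-filtration $\varphi$ of $\Spec(R)$; nondegeneracy translates to $\bigcup_n\varphi(n)=\Spec(R)$ and $\bigcap_n\varphi(n)=\emptyset$, so each prime $\mathfrak p$ has a unique integer $m_\mathfrak p$ with $\mathfrak p\in\varphi(m_\mathfrak p)\setminus\varphi(m_\mathfrak p+1)$. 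Setting $n_\mathfrak p:=-m_\mathfrak p$, the aisle condition $k(\mathfrak p)[n_\mathfrak p]\in\Ucal$ is immediate from the AJS formula, since $\Supp(k(\mathfrak p))=\spcl(\mathfrak p)\subseteq\varphi(m_\mathfrak p)$; the coaisle condition $k(\mathfrak p)[n_\mathfrak p-1]\in\Vcal$ reduces to checking that $\mathbb R\Gamma_{\varphi(n)}(k(\mathfrak p))$ lies in the appropriate cohomological range for each $n$. This is trivial when $\mathfrak p\notin\varphi(n)$ (then $\mathbb R\Gamma_{\varphi(n)}(k(\mathfrak p))=0$ since $\supp(k(\mathfrak p))\cap\varphi(n)=\emptyset$), and when $\mathfrak p\in\varphi(n)$ it follows from Lemma \ref{basics supp}(4): the minimal K-injective resolution of $k(\mathfrak p)$ involves only copies of the $\Gamma_{\varphi(n)}$-invariant injective $E(R/\mathfrak p)$, so $\mathbb R\Gamma_{\varphi(n)}(k(\mathfrak p))=k(\mathfrak p)$ concentrated in degree zero. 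Statement (2) is then immediate, as each prime $\mathfrak p$ appears as the small support of the object $k(\mathfrak p)[n_\mathfrak p]\in\Hcal_\Tbb$.

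For (3), assume further that $\Tbb$ is smashing; then $H^i_\Tbb$ commutes with coproducts and $\Hcal_\Tbb$ is Grothendieck (Theorem \ref{thm:$t$-structure-types}). Item (a) is a routine check: $\Lcal_\mathbf{t}$ is clearly shift-stable, closure under cofibres follows from the long exact cohomology sequence together with the closure of the hereditary torsion class $\Tcal$ under subobjects, quotients and extensions, and closure under coproducts uses smashingness plus the coproduct-stability of $\Tcal$. For (b), the containment $\Tcal\subseteq\Lcal_\mathbf{t}$ is trivial; conversely, if $\mathfrak p\in\supp(\Lcal_\mathbf{t})$ then Theorem \ref{Neeman}(2)(a) puts $k(\mathfrak p)$ in $\Lcal_\mathbf{t}$, so $H^{n_\mathfrak p}_\Tbb(k(\mathfrak p))\in\Tcal$; using (1), this cohomology is identified with $k(\mathfrak p)[n_\mathfrak p]$ itself, giving $\mathfrak p\in\supp(\Tcal)$. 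Item (c) then follows from Neeman's bijection, as $\Lcal_\mathbf{t}$ and the smallest localising subcategory containing $\Tcal$ have the same support $\supp(\Tcal)$.

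For (d), given $T\in\Tcal$ and $Y\in\Lcal_\mathbf{t}^\perp\cap\Hcal_\Tbb$, the shift-stability of $\Lcal_\mathbf{t}^\perp$ gives $\Hom_{\D(R)}(T,Y)=\Hom_{\D(R)}(T,Y[1])=0$; this translates to $Y\in\Tcal^{\perp_{0,1}}$ via the canonical isomorphism $\Hom_{\Hcal_\Tbb}(T,Y)\cong\Hom_{\D(R)}(T,Y)$ and the Yoneda-style injection $\Ext^1_{\Hcal_\Tbb}(T,Y)\hookrightarrow\Hom_{\D(R)}(T,Y[1])$ (Theorem \ref{prop:bound2unbound}(1)). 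Item (e) then follows by applying Theorem \ref{Neeman}(2)(b) to $k(\mathfrak p)$ and combining with (b) and (d). For (f), take $X\in\Hcal_\Tbb$ with $\supp(X)\subseteq\supp(\Tcal)$ and look at the torsion decomposition $0\to t(X)\to X\to X/t(X)\to 0$ in $\Hcal_\Tbb$: exactness of derived tensor gives $\supp(X/t(X))\subseteq\supp(X)\cup\supp(t(X))\subseteq\supp(\Tcal)=\supp(\Lcal_\mathbf{t})$, and by Neeman's bijection this forces $X/t(X)\in\Lcal_\mathbf{t}$, so $X/t(X)=H^0_\Tbb(X/t(X))\in\Tcal\cap\Fcal=0$ and hence $X\in\Tcal$. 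The most delicate step is the careful bookkeeping of shifts in verifying the coaisle membership in (1), together with the identification of $H^{n_\mathfrak p}_\Tbb(k(\mathfrak p))$ with $k(\mathfrak p)[n_\mathfrak p]$ in (b); once these are in hand, the remainder reduces to a methodical combination of Neeman's classification with the torsion-pair data.
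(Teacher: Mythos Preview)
Your argument for parts (2) and (3) is essentially the paper's, with only cosmetic differences (your detour through the torsion decomposition in (3)(f) works, but the paper simply observes that $\supp^{-1}(\supp(\Tcal))=\Lcal_\mathbf{t}$ by Neeman's bijection together with (3)(c), and then $\Lcal_\mathbf{t}\cap\Hcal_\Tbb=\Tcal$ holds by the very definition of $\Lcal_\mathbf{t}$).

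The substantive difference is in (1). You add the hypothesis that $\Tbb$ is compactly generated in order to invoke the explicit sp-filtration description of aisle and coaisle; the paper instead proves (1) for an \emph{arbitrary} nondegenerate $t$-structure. The key input you are missing is \cite[Lemma 2.7]{HN}: for \emph{any} $t$-structure $(\Ucal,\Vcal)$ in $\D(R)$ and any prime $\mathfrak p$, the residue field $k(\mathfrak p)$ lies either in $\Ucal$ or in $\Vcal$. Granting this dichotomy, the sets $A(\mathfrak p)=\{a:k(\mathfrak p)\in\Ucal[a]\}$ and $B(\mathfrak p)=\{b:k(\mathfrak p)\in\Vcal[b]\}$ partition $\Zbb$; nondegeneracy makes both nonempty, and the shift-closure of $\Ucal$ and $\Vcal$ forces $A(\mathfrak p)=(-\infty,\alpha]$ and $B(\mathfrak p)=[\alpha+1,\infty)$ for some $\alpha$, whence $k(\mathfrak p)[-\alpha]\in\Ucal\cap\Vcal[1]=\Hcal_\Tbb$. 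This is both shorter and strictly more general than your computation with $\mathbb R\Gamma_{\varphi(n)}$, and it is what the proposition as stated actually asserts. Your explicit identification $n_\mathfrak p=-\varphi_{\max}(\mathfrak p)$ is still correct and useful in the compactly generated case---the paper records it separately as Proposition~\ref{residue fields in heart}(1)---but it is not needed for Proposition~\ref{some properties} itself.

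A minor remark on (3)(d): the identification $\Ext^1_{\Hcal_\Tbb}(T,Y)\cong\Hom_{\D(R)}(T,Y[1])$ holds for the heart of \emph{any} $t$-structure (this is the standard fact from \cite{BBD}), so you do not need the intermediate hypothesis implicit in your citation of Theorem~\ref{prop:bound2unbound}(1).
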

\begin{proof}
(1) It is shown in \cite[Lemma 2.7]{HN} that the following two subsets form a partition of $\mathbb{Z}$:
\[A(\mathfrak{p}):=\{a\in\mathbb{Z}\colon k(\mathfrak{p})\in\Ucal[a]\}\qquad\text{and}\qquad
	B(\mathfrak{p}):=\{b\in\mathbb{Z}\colon k(\mathfrak{p})\in\Vcal[b]\}.\]
Since $\mathbb{T}$ is nondegenerate, this is a nontrivial partition. Moreover, since $\mathcal U$
(respectively, $\mathcal V$) is closed under positive (respectively, negative) shifts, if $m\leq
n\in A(\mathfrak{p})$ then $m\in A(\mathfrak{p})$ (respectively, if $m\geq n\in B(\mathfrak{p})$ then
$m\in B(\mathfrak{p})$). Hence, $A(\mathfrak{p})$ has a maximum, say $\alpha$, and $B(\mathfrak{p})$
has a minimum, say $\beta$. Since these sets form a partition of $\mathbb{Z}$, we conclude that 
$\beta=\alpha+1$ and, thus, $k(\mathfrak{p})$ lies in
$\Ucal[\alpha]\cap\Vcal[\alpha+1]=\Hcal_\Tbb[\alpha]$. In other words, we have that
$k(\mathfrak{p})[-\alpha]$ lies in $\Hcal_\Tbb$, so it suffices to take $n_\mathfrak{p}=-\alpha$.

(2) Since $\supp(k(\mathfrak{p})[n])=\{\mathfrak{p}\}$ for any integer $n$, it follows from (1) that
$\supp(\Hcal_\Tbb)=\Spec(R)$.

(3)(a) Since $\Tcal$ is closed under subobjects, extensions and quotient objects, it is easy to see
that $\Lcal_\mathbf{t}$ is a triangulated subcategory. Furthermore, since $\mathbb{T}$ is smashing,
$H^0_\mathbb{T}$ commutes with coproducts and, thus, the fact that $\Tcal$ is closed under coproducts
allows us to conclude that $\Lcal_\mathbf{t}$ is a localising subcategory. 

(3)(b) Let us denote by $\Pcal$ the subset of $\Spec(R)$ consisting of the prime ideals
$\mathfrak{p}$ for which $k(\mathfrak{p})[n_\mathfrak{p}]$ lies in $\Tcal$. We prove our statement
by showing that
\[\supp(\Lcal_\mathbf{t})\subseteq \Pcal \subseteq \supp(\Tcal)\subseteq \supp(\Lcal_\mathbf{t}).\]
Let $\mathfrak{p}$ be a prime ideal of $R$. If $\mathfrak{p}$ lies in $\supp(\Lcal_\mathbf{t})$ then
$k(\mathfrak{p})$ lies in $\Lcal_\mathbf{t}$ (see Theorem \ref{Neeman}) and, thus,
$k(\mathfrak{p})[n_\mathfrak{p}]$ lies in $\Tcal$, i.e. $\mathfrak{p}$ lies in $\Pcal$. If
$\mathfrak{p}$ lies in $\Pcal$, since $\supp(k(\mathfrak{p})[n_\mathfrak{p}])=\{\mathfrak{p}\}$,
then $\mathfrak{p}$ lies in $\supp(\Tcal)$.
 Finally, if $\mathfrak{p}$ lies in
$\supp(\Tcal)$, since $\Tcal$ is contained in $\Lcal_\mathbf{t}$, it follows that $\mathfrak{p}$
lies in $\supp(\Lcal_\mathbf{t})$.

(3)(c) Since $\Tcal$ is contained in $\Lcal_\mathbf{t}$, the smallest localising subcategory
containing $\Tcal$ must be contained in $\Lcal_\mathbf{t}$. Conversely, if $\Lcal$ is an arbitrary
localising subcategory containing $\Tcal$, then $\supp(\Lcal)$ must contain
$\supp(\Tcal)=\supp(\Lcal_\mathbf{t})$ and, thus, $\Lcal$ must contain $\Lcal_\mathbf{t}$. 

(3)(d) Given $X$ in $\Lcal_\mathbf{t}^\perp\cap\Hcal_\Tbb$ and $T$ in $\Tcal$ (and, thus, in
$\Lcal_\mathbf{t}$), we have that $\Hom_{\D(R)}(T,X)=0$ and
$\Ext_{\Hcal_\Tbb}^1(T,X)\cong\Hom_{\D(R)}(T[-1],X)=0$ since $T[-1]$ lies also in $\Lcal_\mathbf{t}$. Thus,
$X$ lies in $\Tcal^{\perp_{0,1}}$. 

(3)(e) Let $\mathfrak{p}$ be an arbitrary prime ideal of $R$ and consider the object
$k(\mathfrak{p})[n_\mathfrak{p}]$ of $\Hcal_\Tbb$. By Theorem~\ref{Neeman}, this object either lies in
$\Lcal_\mathbf{t}\cap \Hcal_\Tbb=\Tcal$ or in $\Lcal_\mathbf{t}^\perp\cap \Hcal_\Tbb\subseteq
\Tcal^{\perp_{0,1}}$.

(3)(f) From (3)(c) and Theorem \ref{Neeman}, it follows that that
$\supp^{-1}(\supp(\Tcal)))=\Lcal_\mathbf{t}$. By definition of $\Lcal_\mathbf{t}$, we
have $\Lcal_\mathbf{t}\cap\Hcal_\Tbb=\Tcal$, thus proving our claim.
\end{proof}

Note that, in particular, it follows that the hereditary torsion classes of the heart of
a nondegenerate smashing $t$-structure form a set. Item (3)(f) of the previous proposition motivates the
following definition.

\begin{dfn}
Let $R$ be a commutative noetherian ring and $\Tbb$ a nondegenerate smashing $t$-structure in
$\D(R)$, with heart $\Hcal_\Tbb$. A set $U\subseteq\Spec(R)$ is called a
\textbf{$\Hcal_\Tbb$-support} if it is the support of a hereditary torsion class in $\Hcal_\Tbb$.
This torsion class will then be $\supp^{-1}(U)\cap\Hcal_\Tbb$.
\end{dfn}

As a side corollary of the proposition above, we deduce a relation between the support of a complex
and that of its cohomologies with respect to a smashing $t$-structure.

\begin{cor}
Let $R$ be a commutative noetherian ring, and let $\Tbb$ be a nondegenerate smashing $t$-structure
in $\D(R)$, with heart $\Hcal_\Tbb$ and cohomology functor $H^0_\Tbb$. Let $U\subseteq\Spec(R)$ be a
$\Hcal_\Tbb$-support: then, for every object $X$ of $\D(R)$, we have
\[\supp(X)\subseteq U \  \text{ if and only if } \ \supp(H^0_\Tbb(X[n]))\subseteq U \ \forall n\in\Zbb\]
\end{cor}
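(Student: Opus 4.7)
The plan is to reduce the corollary to Proposition~\ref{some properties}(3) together with Neeman's classification (Theorem~\ref{Neeman}(1)). Since $U$ is an $\Hcal_\Tbb$-support, it is the support of a hereditary torsion class $\mathbf{t}=(\Tcal,\Fcal)$ in $\Hcal_\Tbb$, and by Proposition~\ref{some properties}(3)(f) this torsion class must coincide with $\Tcal=\supp^{-1}(U)\cap\Hcal_\Tbb$. I would then work with the localising subcategory
\[\Lcal_\mathbf{t}=\{Y\in\D(R)\colon H^0_\Tbb(Y[i])\in\Tcal\ \forall i\in\Zbb\}\]
associated to $\mathbf{t}$ in Proposition~\ref{some properties}(3)(a).

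The key step is to identify $\Lcal_\mathbf{t}$ with $\supp^{-1}(U)$. By Proposition~\ref{some properties}(3)(b), $\Lcal_\mathbf{t}$ is localising with $\supp(\Lcal_\mathbf{t})=U$. On the other hand, $\supp^{-1}(U)$ is itself a localising subcategory of $\D(R)$, because support is compatible with coproducts, shifts and triangles; and its support is exactly $U$, since it contains $k(\mathfrak{p})$ for every $\mathfrak{p}\in U$. By the bijection of Theorem~\ref{Neeman}(1), any two localising subcategories with the same support coincide, hence $\Lcal_\mathbf{t}=\supp^{-1}(U)$.

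From this identification, the corollary is immediate. One has $\supp(X)\subseteq U$ if and only if $X\in\supp^{-1}(U)=\Lcal_\mathbf{t}$, which by the definition of $\Lcal_\mathbf{t}$ means that $H^0_\Tbb(X[n])\in\Tcal$ for every $n\in\Zbb$; and since $\Tcal=\supp^{-1}(U)\cap\Hcal_\Tbb$, this last condition is equivalent to $\supp(H^0_\Tbb(X[n]))\subseteq U$ for every $n\in\Zbb$.

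The proof is essentially a formal consequence of the results already at hand, so I do not anticipate any real obstacle; the only point that requires a moment's thought is the observation that $\supp^{-1}(U)$ is automatically a localising subcategory of $\D(R)$, so that Neeman's bijection can be applied to force its equality with $\Lcal_\mathbf{t}$.
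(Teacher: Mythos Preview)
Your proof is correct and follows the same approach as the paper's, which simply states that the corollary is a direct consequence of items (3)(a) and (3)(b) of Proposition~\ref{some properties}. You have spelled out the implicit steps (the identification $\Lcal_\mathbf{t}=\supp^{-1}(U)$ via Neeman's classification, and the use of (3)(f) to translate membership in $\Tcal$ back into a support condition), but the underlying argument is identical.
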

\begin{proof}
This is direct consequence of items (3.a) and (3.b) of Theorem \ref{some properties}.
\end{proof}

\begin{rmk}
Note that this corollary recovers and extends the known relation (see \cite[Corollary 5.3]{BIK})
\[	\spcl(\supp(X))=\spcl(\supp(\bigoplus\nolimits_{n\in\mathbb{Z}}H^0(X[n])))\]
by taking $\Tbb=\Dbb$ the standard $t$-structure, and using that both sides of the equation are
$\Mod(R)$-supports (i.e. specialisation-closed subsets).
\end{rmk}

The following theorem provides some examples of $\Hcal_\Tbb$-supports, for some particular kinds of
hearts.

\begin{thm}\label{classification of tp in heart}
Let $R$ be a commutative noetherian ring and let $\mathbb{T}=(\Ucal,\Vcal)$ be an intermediate
compactly generated $t$-structure in $\D(R)$ with heart $\Hcal_\Tbb$. The following statements hold.
\begin{enumerate}
\item If $V$ is specialisation closed, then
\begin{enumerate} 
\item $\Tcal_V:=\supp^{-1}(V)\cap\Hcal_\Tbb$ is a hereditary torsion class in $\Hcal_\Tbb$;
\item if, additionally, $\Tbb$ induces a derived equivalence, then 
	$\mathbf{t}_V=(\Tcal_V,\Tcal_V^{\perp})$ is a torsion pair of finite type and $\Tcal_V^\perp=
	\Cogen(\supp^{-1}(V^\co)\cap\Hcal_\Tbb)$.
\end{enumerate}
\item If $\Tbb$ is restrictable, then
	for any hereditary torsion pair of finite type $\mathbf{t}=(\Tcal,\Fcal)$ in $\Hcal_\Tbb$ we have that
	$\supp(\Tcal)$ is specialisation closed.
\end{enumerate}
\end{thm}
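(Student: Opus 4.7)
For part (1)(a), I first invoke Theorem \ref{thm:$t$-structure-types} to ensure that the compactly generated $t$-structure $\Tbb$ is smashing with Grothendieck heart, so coproducts in $\Hcal_\Tbb$ coincide with those in $\D(R)$. Since $\Tbb$ is intermediate, every object of $\Hcal_\Tbb$ is bounded, and Lemma \ref{basics supp}(5) together with (\ref{small vs big}) shows that, for $V$ specialisation-closed, the conditions $\supp(X)\subseteq V$ and $\Supp(X)\subseteq V$ coincide on $\Hcal_\Tbb$. Closure of $\Tcal_V$ under coproducts and extensions then follows from $\Supp(\bigoplus X_i)=\bigcup\Supp(X_i)$ and from $\Supp(B)\subseteq\Supp(A)\cup\Supp(C)$ for any triangle $A\to B\to C\to A[1]$ (the latter by componentwise localisation). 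For closure under subobjects and quotients, the key point is that a short exact sequence $0\to A\to B\to C\to 0$ in $\Hcal_\Tbb$ with $\Supp(B)\subseteq V$ gives, at any $\mathfrak p\notin V$, an isomorphism $C_\mathfrak p\simeq A_\mathfrak p[1]$ in $\D(R_\mathfrak p)$; iterating this shift against the uniform cohomological bounds of $\Hcal_\Tbb$ forces $A_\mathfrak p=C_\mathfrak p=0$.

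For part (1)(b), the pair $(\Tcal_V,\Tcal_V^\perp)$ is already a torsion pair by (1)(a). By Proposition \ref{some properties}(3)(c) and Theorem \ref{Neeman}, the smallest localising subcategory of $\D(R)$ containing $\Tcal_V$ is $\supp^{-1}(V)$. The $\Hom$-orthogonality in $\D(R)$ between objects with disjoint small supports, combined with the identity $\Hom_{\Hcal_\Tbb}(T,X)=\Hom_{\D(R)}(T,X)$, then yields $\supp^{-1}(V^\co)\cap\Hcal_\Tbb\subseteq\Tcal_V^\perp$ and hence $\Cogen(\supp^{-1}(V^\co)\cap\Hcal_\Tbb)\subseteq\Tcal_V^\perp$. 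For the reverse inclusion I would exploit the derived equivalence to show, in parallel with Theorem \ref{review torsion}(2)(c) for $\Mod(R)$, that every injective object of $\Hcal_\Tbb$ lying in $\Tcal_V^\perp$ has small support contained in $V^\co$; since $\Hcal_\Tbb$ is cogenerated by its injectives, this suffices. For the finite type statement, the strategy is to verify that the HRS-tilt of $\Tbb$ at $\mathbf{t}_V$ is again compactly generated --- concretely, by describing this tilt, via Theorem \ref{t-st comm}, through an explicit refinement of the sp-filtration attached to $\Tbb$ by the specialisation-closed set $V$ --- so that Theorem \ref{Saorin}(1) gives finite type of $\mathbf{t}_V$. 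The main difficulty here is precisely to transfer, via the derived equivalence, the abelian description of injectives and the identification of the sp-filtration of the HRS-tilt.

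For part (2), the restrictability of $\Tbb$ together with Theorem \ref{MZ} identifies $\fp(\Hcal_\Tbb)$ with $\Hcal_\Tbb\cap\D^b(\mod(R))$. Since $\mathbf{t}=(\Tcal,\Fcal)$ is a hereditary torsion pair of finite type, Corollary \ref{cor:finite-types} supplies a generating set $\Scal\subseteq\fp(\Hcal_\Tbb)$ for $\Tcal$. Each $S\in\Scal$ has finitely generated cohomology modules, so $\supp(S)=\Supp(S)=\bigcup_i V(\Ann(H^i(S)))$ is a finite union of Zariski-closed subsets of $\Spec(R)$ and, in particular, specialisation-closed. Hence $U:=\bigcup_{S\in\Scal}\supp(S)$ is specialisation-closed, and by part (1)(a) the class $\supp^{-1}(U)\cap\Hcal_\Tbb$ is a hereditary torsion class containing $\Scal$ and thus containing $\Tcal$, forcing $\supp(\Tcal)\subseteq U$. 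Since $U\subseteq\supp(\Tcal)$ is immediate, we conclude $\supp(\Tcal)=U$ is specialisation-closed.
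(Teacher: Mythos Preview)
Your argument for (1)(a) contains a genuine gap. From $C_\mathfrak p\simeq A_\mathfrak p[1]$ and the fact that both $A_\mathfrak p$ and $C_\mathfrak p$ have standard cohomology concentrated in a fixed window $[a,b]$, you cannot conclude vanishing: take $A_\mathfrak p$ a nonzero stalk in degree $b$, so that $C_\mathfrak p$ is the same stalk in degree $b-1$, and both lie in $\D^{[a,b]}$ whenever $b>a$. There is no ``iteration'' available from a single short exact sequence. The missing ingredient is that localisation at $\mathfrak p$ preserves the heart: the paper invokes \cite[Lemma~2.11]{HN} to see that both $\Ucal$ and $\Vcal$ are closed under $-\otimes_R R_\mathfrak p$, so this functor is exact on $\Hcal_\Tbb$ and in particular $A_\mathfrak p,C_\mathfrak p\in\Hcal_\Tbb$. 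Then $C_\mathfrak p\simeq A_\mathfrak p[1]$ forces both to vanish immediately, since $\Hcal_\Tbb\cap\Hcal_\Tbb[1]=0$ --- no iteration needed.

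For (1)(b), your $\Cogen$ argument via injectives is sound and close in spirit to the paper's. For finite type, however, the paper does not attempt your proposed sp-filtration computation for the HRS-tilt (which you rightly flag as the hard step): instead it uses that $\Lcal_V:=\supp^{-1}(V)$ is smashing, so $\Lcal_V^\perp$ is definable and therefore closed under directed homotopy colimits; since direct limits in $\Hcal_\Tbb$ are computed as such colimits, $\Tcal_V^{\perp_{\geq 0}}=\Lcal_V^\perp\cap\Hcal_\Tbb$ is closed under direct limits, and then so is $\Tcal_V^\perp=\Cogen(\Tcal_V^{\perp_{\geq 0}})$.

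Your proof of (2) is correct and more elementary than the paper's. The paper instead shows that the smallest localising subcategory of $\D(R)$ containing $\Tcal\cap\fp(\Hcal_\Tbb)$ is compactly generated (its generators lie in $\D^b(\mod(R))$, so \cite[Theorem~3.10]{AJS} applies), hence smashing with specialisation-closed support, and identifies this support with $\supp(\Tcal)$. Your direct computation of $\supp(S)=\Supp(S)$ for each finitely presented generator $S\in\D^b(\mod(R))$, combined with (1)(a) to bound $\supp(\Tcal)$ from above, is a clean shortcut that avoids the detour through localising subcategories.
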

\begin{proof}
(1)(a) We first show that $\Tcal_V:=\supp^{-1}(V)\cap\Hcal_\Tbb$ is a hereditary torsion class in
$\Hcal_\Tbb$, whenever $V$ is a specialisation closed subset of $\Spec(R)$.  First note that, since $\Tbb$ is intermediate, $\Hcal_\Tbb$ is contained in $\D^b(R)$, and
since $V$ is specialisation closed, it follows from Lemma \ref{basics supp} that
$\supp^{-1}(V)\cap\Hcal_\Tbb=\Supp^{-1}(V)\cap\Hcal_\Tbb$. Since $\Tbb$ is compactly generated, it is homotopically smashing and, thus, both $\Ucal$ and $\Vcal$ are closed
under directed homotopy colimits. From \cite[Lemma 2.11]{HN} it follows that both $\Ucal$ and
$\Vcal$ are closed under $-\otimes_RR_\mathfrak{p}$ and, therefore, $-\otimes_RR_\mathfrak{p}$ is
exact in $\Hcal_\Tbb$, for any $\mathfrak{p}$ in $\Spec(R)$. This shows that given a short exact sequence
in $\Hcal_\Tbb$ of the form
\[\begin{tikzcd}
	0 \arrow{r} &X \arrow{r} &Y \arrow{r} &Z \arrow{r} &0
\end{tikzcd}\]
we have that $Y\otimes_R R_\mathfrak{p}=0$ if and only if $X\otimes_R
R_\mathfrak{p}=0=Z\otimes_RR_\mathfrak{p}$. In other words, we have that
$\Supp(Y)=\Supp(X)\cup\Supp(Z)$ and, thus $\Supp(Y)$ is contained in $V$ if and only if both
$\Supp(X)$ and $\Supp(Z)$ are contained in $V$. This shows that $\Tcal_V$ is closed under
extensions, subobjects and quotient objects. Since it is also clearly closed under coproducts,
$\Tcal_V$ is a hereditary torsion class.

(1)(b) Suppose now that $\mathbb{T}$ induces a derived equivalence. In this case we know that there
is an isomorphism $\Ext_{\Hcal_\Tbb}^k(X,Y)\cong \Hom_{\D(R)}(X,Y[k])$ for any $X$ and $Y$ in $\Hcal_\Tbb$ and
$k\geq 0$. In particular, for a subcategory $\Scal$ of $\Hcal_\Tbb$, there is no ambiguity when
calculating the orthogonal $\Scal^{\perp_J}$: this $\Ext$-orthogonal subcategory in $\Hcal_\Tbb$ coincides
with the intersection with $\Hcal_\Tbb$ of the orthogonal computed in $\D(R)$.

We first show that $\Tcal^{\perp_{\geq 0}}_V=\supp^{-1}(V^\co)\cap\Hcal_\Tbb$. It
follows from \cite{N} that $\Lcal_V:=\supp^{-1}(V)$ is a smashing subcategory of $\D(R)$
and, thus, $\Bcal_V:=\Lcal_V^{\perp}$ is also localising with
$\supp(\Bcal_V)=V^\co$. Since, from Proposition \ref{some properties},
$\Lcal_V$ is the smallest localising subcategory containing $\Tcal_V$ and since
$\mathbb{T}$ induces a derived equivalence, it follows that
$\Bcal_V\cap\Hcal_\Tbb=\Tcal_V^{\perp_{\geq 0}}$. Finally, note that since $\Tcal_V$ is
hereditary, we have that $(\Tcal_V, \Tcal_V^\perp)=({}^\perp
E_V,\mathsf{Cogen}(E_V))$ for an injective object $E_V$ in $\Hcal$, and $E_V$ lies in
$\Tcal^{\perp_{\geq 0}}$. It then follows that
$\Tcal_V^\perp=\mathsf{Cogen}(\Bcal_V\cap\Hcal_\Tbb)=\mathsf{Cogen}(\supp^{-1}(V^\co)\cap\Hcal_\Tbb)$.

We now show that this torsion pair $\mathbf{t}=(\Tcal_V,\Tcal_V^{\perp})$ is of finite type.  Recall
that since $\Lcal_V$ is smashing, it is well-known (see \cite[Theorem 4.2]{KrauseInv} and
\cite[Proposition 6.3]{LV}) that $\Bcal_V$ is a definable subcategory and, thus, closed under
directed homotopy colimits \cite[Theorem 3.11]{L}. By \cite[Corollary 5.8]{SSV}, since $\mathbb{T}$
is homotopically smashing, every direct limit in $\Hcal_\mathbb{T}$ is a directed homotopy colimit
in $\D(R)$. Hence, $\Bcal_V\cap\Hcal_\Tbb=T^{\perp_{\geq 0}}$ is closed under direct limits in $\Hcal_\Tbb$.
Since direct limits are exact in $\Hcal_\Tbb$ and $\Tcal_V^\perp=\mathsf{Cogen}(\Tcal^{\perp_{\geq
0}}\cap\Hcal_\Tbb)$, we get that $\Tcal_V^\perp$ is closed under direct limits.

(2) By Theorem \ref{MZ}, since $\Tbb$ is restrictable, $\Hcal_\Tbb$ is locally coherent and $\fp(\Hcal_\Tbb)=\Hcal_\Tbb\cap D^b(\mod(R))$. Since $\mathbf{t}$ is a hereditary torsion pair of finite type,
it follows from Lemma~\ref{CB+H}(2) that $\Tcal=\varinjlim (\Tcal\cap \mathsf{fp}(\Hcal_\Tbb))$.
Let $\Lcal$ be the smallest localising subcategory of $\D(R)$ containing
$\Tcal\cap\mathsf{fp}(\Hcal_\Tbb)$. Clearly, $\Lcal$ is contained in the smallest localising subcategory
containing $\Tcal$, which we denote by $\Lcal_\mathbf{t}$. Since $\Lcal$ is the aisle of a
$t$-structure (namely $(\Lcal,\Lcal^\perp)$), $\Lcal$ is closed under directed homotopy colimits. As
above, since $\mathbb{T}$ is homotopically smashing, directed limits in $\Hcal_\Tbb$ are directed
homotopy colimits in $\D(R)$ and, thus, $\Tcal$ is contained in $\Lcal$, showing that
$\Lcal=\Lcal_\mathbf{t}$. Therefore, we have that
$\mathsf{supp}(\Tcal)=\mathsf{supp}(\Lcal_\mathbf{t})=\mathsf{supp}(\Lcal)$. Now, by assumption, the
$t$-structure $(\Lcal,\Lcal^\perp)$ is generated by all shifts of $\Tcal\cap\mathsf{fp}(\Hcal_\Tbb)$
which, by assumption is made of complexes in $\D^b(\mathsf{mod}(R))$. Now by \cite[Theorem
3.10]{AJS} this means that $\Lcal$ is compactly generated and, therefore, smashing. This shows, by
Theorem \ref{Neeman}, that $\mathsf{supp}(\Tcal)=\mathsf{supp}(\Lcal)$ is specialisation closed.
\end{proof}

Notice that the theorem above provides an immediate generalisation of Theorem \ref{review torsion}, which we will further simplify in Corollary \ref{tp finite type restrictable}.

\begin{cor}\label{cor her ft in some hearts}
Let $R$ be a commutative noetherian ring and let $\mathbb{T}$ be a restrictable and intermediate compactly generated $t$-structure in $\D(R)$ such that 
$\mathbb{T}$ induces a derived equivalence. Then there is a bijection between hereditary torsion
pairs of finite type in $\Hcal_\Tbb$ and specialisation closed subsets of $\Spec(R)$.
\end{cor}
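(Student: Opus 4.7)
The plan is to put together the two directions already established in Theorem \ref{classification of tp in heart}, together with the description of hereditary torsion classes in $\Hcal_\Tbb$ by their support coming from Proposition \ref{some properties}(3)(f).

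First I would define the two maps. In one direction, send a specialisation closed subset $V\subseteq\Spec(R)$ to the pair $\mathbf{t}_V=(\Tcal_V,\Tcal_V^\perp)$ with $\Tcal_V:=\supp^{-1}(V)\cap\Hcal_\Tbb$. By Theorem \ref{classification of tp in heart}(1), the hypotheses that $\Tbb$ is intermediate, compactly generated, and induces a derived equivalence guarantee that $\mathbf{t}_V$ is a hereditary torsion pair of finite type in $\Hcal_\Tbb$. In the other direction, send a hereditary torsion pair of finite type $\mathbf{t}=(\Tcal,\Fcal)$ to $\supp(\Tcal)\subseteq\Spec(R)$; by Theorem \ref{classification of tp in heart}(2), the restrictability of $\Tbb$ forces $\supp(\Tcal)$ to be specialisation closed.

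Next I would verify that the two assignments are mutually inverse. For a hereditary torsion pair of finite type $\mathbf{t}=(\Tcal,\Fcal)$ in $\Hcal_\Tbb$, Proposition \ref{some properties}(3)(f) gives $\supp^{-1}(\supp(\Tcal))\cap\Hcal_\Tbb=\Tcal$, so that $\mathbf{t}_{\supp(\Tcal)}=\mathbf{t}$; this uses only that $\Tbb$ is smashing and nondegenerate, which follows from it being compactly generated. For the other composition, given a specialisation closed $V$, I would check $\supp(\Tcal_V)=V$. The inclusion $\supp(\Tcal_V)\subseteq V$ is immediate from the definition. For the reverse inclusion, given $\mathfrak{p}\in V$, Proposition \ref{some properties}(1) produces an integer $n_\mathfrak{p}$ with $k(\mathfrak{p})[n_\mathfrak{p}]\in\Hcal_\Tbb$; since $\supp(k(\mathfrak{p})[n_\mathfrak{p}])=\{\mathfrak{p}\}\subseteq V$, the object $k(\mathfrak{p})[n_\mathfrak{p}]$ lies in $\Tcal_V$, so $\mathfrak{p}\in\supp(\Tcal_V)$.

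There is really no main obstacle here: essentially every nontrivial piece has already been proved. The work is just to observe that the three ingredients (well-definedness of each map, and the two compositions being identities) fit together; the most delicate bookkeeping is remembering that the derived equivalence hypothesis is precisely what is needed to conclude via Theorem \ref{classification of tp in heart}(1)(b) that the torsion pair $\mathbf{t}_V$ produced from $V$ is actually of finite type (and not merely that $\Tcal_V$ is a hereditary torsion class), so that the bijection lands in the correct set.
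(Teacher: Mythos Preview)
Your proposal is correct and follows exactly the approach the paper intends: the corollary is stated without proof in the paper as an immediate consequence of Theorem~\ref{classification of tp in heart}, and you have simply spelled out the details---using parts (1) and (2) of that theorem for well-definedness of the two maps, and Proposition~\ref{some properties}(3)(f) together with the residue-field argument for mutual inverseness.
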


In fact, we can be more precise about the support of a hereditary torsion pair of finite type for
nondegenerate compactly generated $t$-structures.

\begin{prop}\label{residue fields in heart}
Let $R$ be a commutative noetherian ring and let $\mathbb{T}=(\Ucal,\Vcal)$ be a nondegenerate compactly generated
$t$-structure in $\D(R)$ with heart $\Hcal_\Tbb$ and  associated sp-filtration $\varphi$.
\begin{enumerate}
\item  For each $\mathfrak{p}$ in $\Spec(R)$, let $\varphi_{max}(\mathfrak{p})$ denote the largest
	integer $n$ for which $\mathfrak{p}$ belongs to $\varphi(n)$. Then we have
	$n_\mathfrak{p}=-\varphi_{max}(\mathfrak{p})$ and, in particular, if $\mathfrak{p}$ is contained in
	a prime $\mathfrak{q}$, then $n_\mathfrak{p}\geq n_\mathfrak{q}$.
\item If $\mathbf{t}=(\Tcal,\Fcal)$ is a hereditary torsion
	pair of finite type in $\Hcal_\Tbb$, then there is an sp-filtration $\psi$ such that
	$\varphi(j+1)\subseteq \psi(j)\subseteq \varphi(j)$ for all $j$ in $\mathbb{Z}$ and 
	\[\mathsf{supp}(\Tcal)=\bigcup_{j\in\mathbb{Z}} \left[\psi(j)\setminus \varphi(j+1)\right].\]
\end{enumerate}
\end{prop}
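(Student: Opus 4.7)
For part (1), I would revisit the construction of $n_\mathfrak{p}$ in the proof of Proposition \ref{some properties}(1), where $\alpha:=\max\{a\in\Zbb : k(\mathfrak{p})\in\Ucal[a]\}$ and $n_\mathfrak{p}=-\alpha$. Using the description of $\Ucal$ in Theorem \ref{t-st comm}(3), the condition $k(\mathfrak{p})\in\Ucal[a]$, i.e.\ $k(\mathfrak{p})[-a]\in\Ucal$, unravels to $\Supp(H^0(k(\mathfrak{p})[n-a]))\subseteq\varphi(n)$ for every $n\in\Zbb$. The only $n$ contributing nontrivial cohomology is $n=a$, where the relevant support is $\Supp(k(\mathfrak{p}))=\spcl(\mathfrak{p})$; since $\varphi(a)$ is specialisation-closed, the condition collapses to $\mathfrak{p}\in\varphi(a)$. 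Hence $\alpha=\varphi_{max}(\mathfrak{p})$ and $n_\mathfrak{p}=-\varphi_{max}(\mathfrak{p})$. The monotonicity follows at once: if $\mathfrak{p}\subseteq\mathfrak{q}$, specialisation-closedness of each $\varphi(n)$ gives $\mathfrak{p}\in\varphi(n)\Rightarrow\mathfrak{q}\in\varphi(n)$, so $\varphi_{max}(\mathfrak{p})\leq\varphi_{max}(\mathfrak{q})$ and $n_\mathfrak{p}\geq n_\mathfrak{q}$.

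For part (2), I would take $\psi$ to be the sp-filtration associated to the HRS-tilt $\Tbb_\mathbf{t}$. Since $\mathbf{t}$ is hereditary and of finite type, Theorem \ref{Saorin}(1) ensures that $\Tbb_\mathbf{t}$ is smashing with a Grothendieck heart; noting that the inclusions $\Ucal_\mathbf{t}\subseteq\Ucal$ and $\Vcal_\mathbf{t}\subseteq\Vcal[1]$ transfer nondegeneracy from $\Tbb$, Theorem \ref{t-st comm} then supplies a unique sp-filtration $\psi$ for $\Tbb_\mathbf{t}$. The defining description of the HRS-tilt gives aisle inclusions
\[\Ucal[1]\subseteq\Ucal_\mathbf{t}\subseteq\Ucal,\]
where the second is immediate from the definition and the first holds because every $X\in\Ucal[1]$ satisfies $H^0_\Tbb(X[k])=0$ for all $k\geq 0$. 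Now $\Ucal[1]$ is the aisle associated to the shifted sp-filtration $n\mapsto\varphi(n+1)$, and pointwise inclusion of sp-filtrations is equivalent to inclusion of the corresponding aisles (both directions being visible through $\psi(n)=\{\mathfrak{p}:R/\mathfrak{p}[-n]\in\Ucal_\psi\}$). This yields $\varphi(j+1)\subseteq\psi(j)\subseteq\varphi(j)$ for every $j\in\Zbb$.

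To identify $\supp(\Tcal)$, I would combine Proposition \ref{some properties}(3)(b), giving $\supp(\Tcal)=\{\mathfrak{p}:k(\mathfrak{p})[n_\mathfrak{p}]\in\Tcal\}$, with the elementary observation that for $X\in\Hcal_\Tbb$ one has $X\in\Tcal\iff X\in\Ucal_\mathbf{t}$ (since such $X$ automatically satisfies the vanishing of $H^0_\Tbb(X[k])$ for $k>0$). Applying the argument of part (1) to $\Tbb_\mathbf{t}$ and $\psi$, membership $k(\mathfrak{p})[n_\mathfrak{p}]\in\Ucal_\mathbf{t}$ amounts to $\mathfrak{p}\in\psi(-n_\mathfrak{p})=\psi(\varphi_{max}(\mathfrak{p}))$. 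Since $\mathfrak{p}\notin\varphi(\varphi_{max}(\mathfrak{p})+1)$ by definition, this places $\mathfrak{p}$ in $\psi(\varphi_{max}(\mathfrak{p}))\setminus\varphi(\varphi_{max}(\mathfrak{p})+1)$, hence in $\bigcup_{j\in\Zbb}[\psi(j)\setminus\varphi(j+1)]$. Conversely, any $\mathfrak{p}\in\psi(j)\setminus\varphi(j+1)$ satisfies $\mathfrak{p}\in\psi(j)\subseteq\varphi(j)$ and $\mathfrak{p}\notin\varphi(j+1)$, forcing $\varphi_{max}(\mathfrak{p})=j$ and $\mathfrak{p}\in\psi(\varphi_{max}(\mathfrak{p}))$, whence $\mathfrak{p}\in\supp(\Tcal)$.

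The main obstacle I anticipate is the careful bookkeeping of shift conventions, in particular making sure that the aisle inclusion $\Ucal[1]\subseteq\Ucal_\mathbf{t}$ translates into the correctly indexed filtration inclusion $\varphi(j+1)\subseteq\psi(j)$; once this translation between aisles and sp-filtrations is fixed, both parts reduce to a direct unraveling of Theorem \ref{t-st comm}(3) combined with the characterisation of $\supp(\Tcal)$ from Proposition \ref{some properties}(3)(b).
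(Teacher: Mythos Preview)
Your proposal is correct and follows essentially the same route as the paper: both parts rest on reading off membership of stalk complexes $k(\mathfrak{p})[-j]$ in the aisle via Theorem~\ref{t-st comm}(3), and part (2) identifies $\psi$ as the sp-filtration of the HRS-tilt $\Tbb_\mathbf{t}$ (compactly generated by Theorems~\ref{Saorin} and~\ref{t-st comm}), then computes $\supp(\Tcal)$ through Proposition~\ref{some properties}(3)(b). Your treatment is somewhat more explicit than the paper's---you spell out why the aisle inclusions $\Ucal[1]\subseteq\Ucal_\mathbf{t}\subseteq\Ucal$ translate into $\varphi(j+1)\subseteq\psi(j)\subseteq\varphi(j)$, and you verify both containments for the support formula separately---but there is no substantive difference in strategy.
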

\begin{proof}
(1) Given the cohomological description of $\Ucal$ (see Theorem \ref{t-st comm}), we know that the stalk complex
$k(\mathfrak{p})[-\varphi_{max}(\mathfrak{p})]$ lies in $\Ucal$ but
$k(\mathfrak{p})[-(\varphi_{max}(\mathfrak{p})+1)]$ does not. By \cite[Lemma 2.7]{HN}, this means that
$k(\mathfrak{p})[-\varphi_{max}(\mathfrak{p})-1]$ lies in $\Vcal$ and, therefore,
$k(\mathfrak{p})[-\varphi_{max}(\mathfrak{p})]$ lies in $\Vcal[1]\cap\Ucal=\Hcal_\Tbb$, as wanted. Since an
sp-filtration is a decreasing sequence of specialisation-closed subsets, we have that if $\mathfrak{p}$ is
contained in a prime $\mathfrak{q}$, then $\varphi_{max}(\mathfrak{p})\leq \varphi_{max}(\mathfrak{q})$
and, thus, $n_\mathfrak{p}\geq n_\mathfrak{q}$.

(2) Let $\mathbf{t}=(\Tcal,\Fcal)$ be a hereditary torsion pair of finite type in $\Hcal_\Tbb$. The
$t$-structure obtained by HRS-tilting $\mathbb{T}$ with respect to $\mathbf{t}$ is compactly
generated, since $\mathbf{t}$ is of finite type (see Theorems \ref{Saorin} and \ref{t-st comm}).
Thus, it is determined by an sp-filtration $\psi$ satisfying $\varphi(j+1)\subseteq \psi(j)\subseteq
\varphi(j)$. Let $\Ucal_\psi$ be the aisle of the $t$-structure associated to $\psi$.  Clearly, we
have that $\Tcal=\Ucal_\psi\cap\Hcal_\Tbb$. We need to check which shifted residue fields belong to
$\Tcal$ (following Proposition \ref{some properties}(3)(b)).  For any $\mathfrak{p}$ in $\Spec(R)$,
by (1) $k(\mathfrak p)[-j]$ lies in $\Hcal_\Tbb$ if and only if $\mathfrak p$ lies in
$\varphi(j)\setminus\varphi(j+1)$. Now, $k(\mathfrak p)[-j]$ lies in $\Tcal$ if and only if
$k(\mathfrak p)[-j]$ lies in $\Hcal_\Tbb\cap \Ucal_\psi$, i.e. $\mathfrak p$ lies in $\psi(j)\setminus
\varphi(j+1)$. Thus $\supp(\Tcal)$ coincides with the union of all such sets $\psi(j)\setminus
\varphi(j+1)$.
\end{proof}

\subsection{A complete classification of hereditary torsion pairs in a special case}\label{sub:perfect}

The previous section shows that when trying to classify the hereditary torsion pairs in the heart
$\Hcal_\Tbb$ of a nondegenerate compactly generated $t$-structure, one may equivalently describe the
corresponding $\Hcal_\Tbb$-supports. For example, by Theorem \ref{classification of tp in heart} we know that specialisation-closed sets are
often $\Hcal_\Tbb$-supports.
While the problem of classifying all $\Hcal_\Tbb$-supports remains, in general, open, we are able to provide a
complete classification for some hearts. These occur as HRS-tilts of $\Mod(R)$ at a perfect torsion pair.

\begin{rmk}\label{rmk:hrs-at-hereditary}
Notice that the HRS-tilt of $\Mod(R)$ at a hereditary torsion pair, corresponding to a specialisation-closed subset
$V\subseteq\Spec(R)$, is always compactly generated. Indeed, its aisle corresponds to the sp-filtration
$\cdots=\Spec(R)\supseteq V\supseteq \emptyset=\cdots$, with $V$ in degree $0$.
\end{rmk}

\begin{dfn}
A hereditary torsion pair $\mathbf{t}=(\Tcal,\Fcal)$ in a module category is said to be
\textbf{perfect} if the associated Giraud subcategory $\Tcal^{\perp_{0,1}}$ admits a right adjoint
to the inclusion functor.
\end{dfn}

Note that, by definition, the inclusion functor of a Giraud subcategory admits a left adjoint.
Therefore, the adjective \textbf{perfect} applied to a hereditary torsion pair guarantees that the
associated Giraud subcategory is, in fact, a (extension-closed) bireflective subcategory of
$\mathsf{Mod}(R)$. We learned the following useful property of perfect torsion pairs from a private communication with Lidia Angeleri H\"ugel and Ryo Takahashi. We include a proof for sake of completion.

\begin{lemma}\label{cosygyzy}
	Let $R$ be a ring and let $\mathbf{t}=(\Tcal,\Fcal)$ be a perfect torsion pair in $\Mod(R)$. Then
	$\Tcal^{\bot_{0,1}}=\Tcal^{\bot_{\geq0}}$.
\end{lemma}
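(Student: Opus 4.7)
The plan is to exploit the bireflectivity of $\Tcal^{\perp_{0,1}}$ guaranteed by the perfect hypothesis in order to show that a minimal injective resolution in $\Mod(R)$ of any $X\in\Tcal^{\perp_{0,1}}$ stays entirely within $\Tcal^{\perp_{0,1}}$. Once this is established, applying $\Hom_R(T,-)$ for $T\in\Tcal$ yields the zero complex, forcing the vanishing of all higher $\Ext$ groups simultaneously.

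First I would note that, since $\mathbf{t}$ is perfect, the inclusion $i\colon\Tcal^{\perp_{0,1}}\hookrightarrow\Mod(R)$ admits both a left adjoint (the localisation, which exists for every Giraud subcategory) and a right adjoint (by the perfect assumption). In particular $i$ is itself a left adjoint, so it preserves all colimits; consequently, whenever $f\colon B\to A$ is a morphism with $A,B\in\Tcal^{\perp_{0,1}}$, its cokernel computed in $\Mod(R)$ still lies in $\Tcal^{\perp_{0,1}}$.

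Next, I would proceed by induction along a minimal injective resolution $0\to X\to I^0\to I^1\to\cdots$ in $\Mod(R)$, setting $\Omega^0 X:=X$ and $\Omega^{-(k+1)}X:=\Coker(\Omega^{-k}X\to I^k)$. For the base case, $I^0=E(X)$: since $X\in\Fcal$ and $\Fcal$ is closed under injective envelopes (by hereditariness of $\mathbf{t}$), $I^0$ is torsionfree, and being injective it automatically satisfies $\Ext_R^1(T,I^0)=0$ for every $T\in\Tcal$, so $I^0\in\Tcal^{\perp_{0,1}}$. For the inductive step, assuming $\Omega^{-k}X,I^k\in\Tcal^{\perp_{0,1}}$, the closure property of the previous paragraph applied to the monomorphism $\Omega^{-k}X\to I^k$ gives $\Omega^{-(k+1)}X\in\Tcal^{\perp_{0,1}}\subseteq\Fcal$, and then $I^{k+1}=E(\Omega^{-(k+1)}X)\in\Tcal^{\perp_{0,1}}$ by the same reasoning as for $I^0$.

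Finally, for every $T\in\Tcal$ and every $k\geq 0$ one has $\Hom_R(T,I^k)=0$; computing Ext via this injective resolution gives $\Ext^i_R(T,X)=H^i(\Hom_R(T,I^\bullet))=0$ for all $i\geq 0$, so $X\in\Tcal^{\perp_{\geq 0}}$. This proves $\Tcal^{\perp_{0,1}}\subseteq\Tcal^{\perp_{\geq 0}}$, and the reverse inclusion is trivial. The key delicate point is the closure of $\Tcal^{\perp_{0,1}}$ under cokernels of its own morphisms: without the right adjoint to $i$ provided by perfectness, such cokernels computed in $\Mod(R)$ need not remain in $\Tcal^{\perp_{0,1}}$, and the inductive step would break.
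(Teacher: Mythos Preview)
Your proof is correct and follows essentially the same approach as the paper's: both arguments use that the right adjoint to the inclusion (from perfectness) forces cokernels in $\Mod(R)$ of maps in $\Tcal^{\perp_{0,1}}$ to remain in $\Tcal^{\perp_{0,1}}$, combine this with closure under injective envelopes (from hereditariness) to get closure under cosyzygies, and then conclude by dimension shifting. Your version spells out the induction along the injective resolution explicitly, while the paper compresses this into the phrase ``closed under cosyzygies, and one concludes by dimension shifting''.
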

\begin{proof}
	Since $\mathbf{t}$ is hereditary, $\Fcal=\Tcal^\perp$ is closed under injective envelopes, so
	$\Tcal^{\bot_{0,1}}$ is as well. Recall that a Giraud subcategory of $\Mod(R)$ is always an
	abelian subcategory (\cite[Proposition 1.3]{St}). Since $\mathbf{t}$ is perfect,
	the inclusion functor $\Tcal^{\bot_{0,1}}\hookrightarrow\Mod(R)$ has a
	right adjoint: therefore, cokernels taken in
	$\Tcal^{\bot_{0,1}}$ coincide with those in $\Mod(R)$. 
	This shows that $\Tcal^{\bot_{0,1}}$ is closed under cokernels in $\Mod(R)$. It follows that $\Tcal^{\bot_{0,1}}$ is closed under cosygyzies, and one concludes by
	dimension shifting.
\end{proof}

The following lemma gives examples of hearts $\Hcal$ with $\Hcal$-supports that are not specialisation-closed.

\begin{lemma}\label{generalisation-closed support}
Let $R$ be a ring and let $\mathbf{t}=(\Tcal,\Fcal)$ be a perfect torsion pair in $\mathsf{Mod}(R)$. Denote the corresponding Giraud subcategory by
$\Ccal:=\Tcal^{\perp_{0,1}}$. Let $\Hcal_\mathbf{t}$ be the heart of the HRS-tilt
at $\mathbf{t}$. The following statements hold:
\begin{enumerate}
	\item There is a TTF triple $(\Fcal[1],\Tcal,\Ccal[1])$ in $\Hcal_\mathbf{t}$;
	\item $\Ccal[1]$ itself is a hereditary torsion class in $\Hcal_\mathbf{t}$;
	\item If $R$ is commutative noetherian, then $\supp(\Ccal[1])=V^\co$. Hence, $V^\co$ is a
	 (generalisation-closed) $\Hcal_\mathbf{t}$-support.
\end{enumerate}
\end{lemma}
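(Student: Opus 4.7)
For (1), the HRS theorem already gives $(\Fcal[1],\Tcal)$ as a torsion pair in $\Hcal_\mathbf{t}$; since $\mathbf{t}$ is perfect (hence of finite type), Theorem~\ref{Saorin}(1) makes $\Hcal_\mathbf{t}$ Grothendieck, and Remark~\ref{TTF in HRS} then produces a TTF triple $(\Fcal[1],\Tcal,\Wcal)$ with $\Wcal$ the right $\Hom$-orthogonal of $\Tcal$ inside $\Hcal_\mathbf{t}$. To identify $\Wcal=\Ccal[1]$, first note that for any $C\in\Ccal$ and $T\in\Tcal$ we have $\Hom_{\D(R)}(T,C[1])=\Ext^1_R(T,C)=0$, so $\Ccal[1]\subseteq\Wcal$. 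For the reverse inclusion, let $X\in\Wcal$ and rotate the truncation triangle to $H^0(X)[-1]\to H^{-1}(X)[1]\to X\to H^0(X)$. Applying $\Hom_{\D(R)}(T,-)$ for $T\in\Tcal$ and using $\Hom_{\D(R)}(T,X)=0$, the resulting long exact sequence gives $\Ext^1_R(T,H^{-1}(X))=0$, so $H^{-1}(X)\in\Tcal^{\perp_{0,1}}=\Ccal$; then Lemma~\ref{cosygyzy}'s identity $\Ccal=\Tcal^{\perp_{\geq 0}}$ also forces $\Ext^2_R(T,H^{-1}(X))=0$, and the next step of the sequence yields $\Hom_R(T,H^0(X))=0$ for every $T\in\Tcal$. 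Taking $T=H^0(X)\in\Tcal$ forces $H^0(X)=0$, hence $X=H^{-1}(X)[1]\in\Ccal[1]$.

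For (2), part (1) already makes $\Ccal[1]$ a torsionfree class in $\Hcal_\mathbf{t}$, which is automatically closed under subobjects, extensions and products; I still need closure under quotients and coproducts to upgrade it to a (hereditary) torsion class. For quotients, consider an epimorphism $C[1]\twoheadrightarrow Y$ in $\Hcal_\mathbf{t}$ with kernel $K$: torsionfree-subobject closure puts $K\in\Ccal[1]$, so $K=K'[1]$ for some $K'\in\Ccal$, and the long exact standard-cohomology sequence collapses to $0\to K'\to C\to H^{-1}(Y)\to 0$ together with $H^0(Y)=0$. The proof of Lemma~\ref{cosygyzy} gives closure of $\Ccal$ under cokernels in $\Mod(R)$, so $H^{-1}(Y)\in\Ccal$ and hence $Y\in\Ccal[1]$. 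For coproducts, perfectness makes the inclusion $\Ccal\hookrightarrow\Mod(R)$ a left adjoint (hence coproduct-preserving), and by Theorem~\ref{Saorin}(1) the finite-type torsion pair $\mathbf{t}$ gives a smashing HRS $t$-structure, so coproducts in $\Hcal_\mathbf{t}$ coincide with coproducts in $\D(R)$ and therefore preserve $\Ccal[1]$.

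For (3), the support is shift-invariant, so it suffices to show $\supp(\Ccal)=V^\co$. The inclusion $V^\co\subseteq\supp(\Ccal)$ comes from Theorem~\ref{review torsion}(2)(b): each residue field $k(\mathfrak p)$ with $\mathfrak p\in V^\co$ lies in $\Tcal^{\perp_{0,1}}=\Ccal$ and has support $\{\mathfrak p\}$. For the reverse inclusion, chain Lemma~\ref{cosygyzy} ($\Ccal=\Tcal^{\perp_{\geq 0}}$) with the abelian/derived orthogonality proposition in \S 3.2 identifying $\Tcal^{\perp_{\geq 0}}=\Lcal_V^\perp\cap\Mod(R)$ for $\Lcal_V=\supp^{-1}(V)$, together with Theorem~\ref{Neeman}, which gives $\supp(\Lcal_V^\perp)=V^\co$; then $\Ccal\subseteq\Lcal_V^\perp$ yields $\supp(\Ccal)\subseteq V^\co$.

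The main technical point across all three parts is the strengthened identity $\Tcal^{\perp_{0,1}}=\Tcal^{\perp_{\geq 0}}$ provided by Lemma~\ref{cosygyzy}, i.e.\ the closure of $\Ccal$ under cokernels and all higher cosyzygies: this is precisely the content perfectness buys, and it is what enables the identification of the TTF complement in (1), the quotient-closure in (2), and the embedding of $\Ccal$ into a localising subcategory of the correct support in (3).
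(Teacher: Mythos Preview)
Your proof is correct and follows essentially the same strategy as the paper's: both parts hinge on Lemma~\ref{cosygyzy}'s upgrade $\Tcal^{\perp_{0,1}}=\Tcal^{\perp_{\geq 0}}$, and the identification $\Wcal=\Ccal[1]$ in part~(1) proceeds via the same long exact sequence argument. A few minor tactical differences are worth noting. In part~(2), the paper checks closure of $\Ccal[1]$ under cokernels of monomorphisms in $\Hcal_\mathbf{t}$ by applying $\Hom_{\D(R)}(T,-)$ directly to the triangle $C[1]\to C'[1]\to X\to C[2]$ and then invoking part~(1)'s identification $\Ccal[1]=\Tcal^\perp\cap\Hcal_\mathbf{t}$; you instead pass through standard cohomology and reduce to closure of $\Ccal$ under cokernels in $\Mod(R)$. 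Both work, though the paper's route recycles part~(1) more efficiently. Your separate argument for coproduct-closure is redundant: once $\Hcal_\mathbf{t}$ is Grothendieck (which you already established in part~(1) via Theorem~\ref{Saorin}), any torsionfree class is automatically closed under coproducts, so only quotient-closure needs checking. In part~(3), the paper's argument is a one-line sketch, while you spell out both inclusions via Theorem~\ref{Neeman} and the Proposition in \S3.2; your version is more complete.
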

\begin{proof}
(1) The fact that $\Tcal$ is TTF class in $\Hcal_\mathbf{t}$ follows from Remark \ref{TTF in HRS}.
We only need to verify that the corresponding torsionfree class in $\Hcal_\mathbf{t}$, denoted by $\Fcal'$, coincides with $\Ccal[1]$. Since
$\Hom_{\Hcal_\mathbf{t}}(T,C[1])=\Hom_{\D(R)}(T,C[1])\simeq\Ext_R^1(T,C)=0$, for all $T$ in $\Tcal$
and $C$ in $\Ccal$, we have $\Ccal[1]\subseteq \Fcal'$. For the converse, let $X$ be an object in
$\Fcal'$, and consider the triangle
\[\begin{tikzcd}
	F_X[1] \arrow{r} &X \arrow{r} &T_X \arrow{r}{w} &F_X[2]
\end{tikzcd}\]
which corresponds to the short exact sequence in $\Hcal_\mathbf{t}$ given by the torsion pair
$(\Fcal[1],\Tcal)$. Since $\Fcal'$ is closed under subobjects in $\Hcal_\mathbf{t}$, for every $T$
in $\Tcal$ we have
$0=\Hom_{\Hcal_\mathbf{t}}(T,F_X[1])=\Hom_{\D(R)}(T,F_X[1])\simeq\Ext_R^1(T,F_X)$.  This shows that
in fact $F_X$ lies in $\Ccal$. Since the torsion pair $\mathbf{t}$ is perfect,
$\Ccal=\Tcal^{\perp_{\geq 0}}$ and, therefore, $0=\Ext_R^2(T_X,F_X)\simeq\Hom_{\D(R)}(T_X,F_X[2])$.
Since $w$ lies in the latter $\Hom$-space, we conclude that $w=0$ and that the triangle
above splits; hence $T_X=0$ and $X=F_X[1]$ lies in $\Ccal[1]$.

(2) Since $\Ccal[1]$ is a torsionfree class in $\Hcal_\mathbf{t}$, it
suffices to show that $\Ccal[1]$ is closed under cokernels of monomorphisms in $\Hcal_\mathbf{t}$.
Consider then $C$ and $C'$ in $\Ccal$ and a triangle
\[C[1] \longrightarrow C'[1] \longrightarrow X \longrightarrow C[2]\]
with $X$ in $\Hcal_\mathbf{t}$. Applying the functor $\mathsf{Hom}_{\D(R)}(T,-)$ for every $T$ in
$\Tcal$, since $\Hom_{\D(R)}(T,C'[1])\cong \Ext^1_R(T,C')=0$ and
$\Hom_{\D(R)}(T,C[2])\cong \Ext^2_R(T,C)=0$ (given that $\mathbf{t}$ is a perfect torsion pair), we
have that $\Hom_{\D(R)}(T,X)=0$. This shows that $X$ indeed lies in $\Fcal'=\Ccal[1]$.

(3) If $R$ is commutative noetherian, since $\supp(\Ccal[1])=\supp(\Ccal)$, it suffices to observe that $\supp(\Ccal)=V^\co$ (because $E(R/\mathfrak{p})$ lies in $\Ccal$ for all $\mathfrak{p}$ not in $V$).  The last assertion then follows from part (2).
\end{proof}

In fact, using this generalisation-closed $\Hcal_\mathbf{t}$-support one can construct many more which are not
specialisation-closed. Recall that torsion pairs can be ordered by inclusion of torsion classes. Under this
partial order, they form a lattice, where the meet is given by intersecting torsion classes and the join is
given by intersecting torsionfree classes. The following lemma recalls that this structure restricts
to hereditary torsion classes, and shows that it translates well via the assignment of support.

\begin{lemma}\label{lemma:cupcap of supports} Let $R$ be a commutative noetherian ring and let $\Hcal$ be the heart of a nondegenerate compactly
generated $t$-structure in $\D(R)$. Let $\mathbf{t}_1=(\Tcal_1,\Fcal_1)$,
$\mathbf{t}_2=(\Tcal_2,\Fcal_2)$ be hereditary torsion pairs in
$\Hcal$ with $\supp(\Tcal_1)=V_1$, $\supp(\Tcal_2)=V_2$. Then:
\begin{enumerate}
	\item $\mathbf{t}_3:=\mathbf{t}_1\wedge\mathbf{t}_2:=(\Tcal_3:=\Tcal_1\cap\Tcal_2,\Fcal_3)$
	is a hereditary torsion pair in $\Hcal$ with $\supp(\Tcal_3)=V_1\cap V_2$;
	\item $\mathbf{t}_4:=\mathbf{t}_1\vee\mathbf{t}_2:=(\Tcal_4,\Fcal_4:=\Fcal_1\cap\Fcal_2)$ is
	a hereditary torsion pair in $\Hcal$ with $\supp(\Tcal_4)=V_1\cup V_2$.
\end{enumerate}
In particular, intersections and unions of $\Hcal$-supports are again
$\Hcal$-supports.
\end{lemma}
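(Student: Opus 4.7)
The plan is to leverage the Grothendieck structure of $\Hcal$ (granted by Theorem \ref{thm:$t$-structure-types} since $\Tbb$ is nondegenerate and compactly generated) together with the fundamental reformulation of support encoded in Proposition \ref{some properties}(3)(b): for any hereditary torsion class $\Tcal\subseteq\Hcal$,
\[\supp(\Tcal) = \{\mathfrak{p}\in\Spec(R) : k(\mathfrak{p})[n_\mathfrak{p}]\in\Tcal\},\]
where $n_\mathfrak{p}$ is the integer produced by Proposition \ref{some properties}(1). This identity turns support manipulations into membership questions for shifted residue fields, which is the bridge that makes the lemma essentially formal.

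For (1), $\Tcal_3 = \Tcal_1\cap\Tcal_2$ trivially inherits from both $\Tcal_i$ closure under subobjects, quotients, coproducts, and extensions, hence (in the Grothendieck category $\Hcal$) is a hereditary torsion class, paired with $\Fcal_3:=\Tcal_3^\perp$. Applying the reformulation above, $\supp(\Tcal_3)$ is precisely the set of $\mathfrak{p}$ with $k(\mathfrak{p})[n_\mathfrak{p}]\in\Tcal_1\cap\Tcal_2$, which is $V_1\cap V_2$.

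For (2), $\Fcal_4=\Fcal_1\cap\Fcal_2$ inherits closure under subobjects, products, and extensions, so it is a torsionfree class, paired with $\Tcal_4:={}^\perp\Fcal_4$. Each $\Fcal_i$ is closed under injective envelopes because $\mathbf{t}_i$ is hereditary in a Grothendieck category; hence so is $\Fcal_4$, making $(\Tcal_4,\Fcal_4)$ hereditary. The inclusion $V_1\cup V_2\subseteq\supp(\Tcal_4)$ is immediate from $\Tcal_1\cup\Tcal_2\subseteq\Tcal_4$. For the converse, suppose for contradiction that $\mathfrak{p}\in\supp(\Tcal_4)\setminus(V_1\cup V_2)$; then $k(\mathfrak{p})[n_\mathfrak{p}]\in\Tcal_4$ while $k(\mathfrak{p})[n_\mathfrak{p}]\notin\Tcal_i$ for $i=1,2$. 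By Proposition \ref{some properties}(3)(e), the latter places $k(\mathfrak{p})[n_\mathfrak{p}]$ in each Giraud subcategory $\Tcal_i^{\perp_{0,1}}\subseteq\Fcal_i$, hence in $\Fcal_4$, forcing $k(\mathfrak{p})[n_\mathfrak{p}]\in\Tcal_4\cap\Fcal_4=0$, a contradiction. The final ``in particular'' clause is then just (1) and (2) read through the assignment $\Tcal\mapsto\supp(\Tcal)$.

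The only non-formal step, and the place where the argument could fail without the right input, is the $\subseteq$ direction of the support equality in (2): a priori $\Tcal_4$ could absorb shifted residue fields from primes outside $V_1\cup V_2$, since it is defined only by an orthogonality condition against $\Fcal_4$. The dichotomy of Proposition \ref{some properties}(3)(e) is precisely what rules this out, confining each $k(\mathfrak{p})[n_\mathfrak{p}]$ either inside $\Tcal_i$ or deep in its torsionfree orthogonal and leaving no intermediate position.
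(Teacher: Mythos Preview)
Your proof is correct and follows essentially the same route as the paper's: both parts rely on the characterisation of support via shifted residue fields in Proposition~\ref{some properties}(3)(b), heredity of the join via closure of the $\Fcal_i$ under injective envelopes, and the dichotomy of Proposition~\ref{some properties}(3)(e) to bound $\supp(\Tcal_4)$ from above. Your write-up simply makes explicit the contradiction argument that the paper leaves implicit.
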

\begin{proof}
(1) If $\Tcal_1,\Tcal_2$ are closed under subobjects, then clearly $\Tcal_3$ is as well, so
$\mathbf{t}_3$ is hereditary. The claim on the support follows from the second equality of item
(3)(b) of Proposition \ref{some properties}.

(2) Recall that $\Hcal$ is a Grothendieck category. Since both $\Fcal_1$ and $\Fcal_2$ are closed
under injective envelopes, then so is $\Fcal_4$: thus, $\mathbf{t}_4$ is hereditary. Now using items
(3)(b) and (3)(e) of Proposition \ref{some properties}
one proves the claim about support.
\end{proof}

We are now able to classify supports in the heart $\Hcal_\mathbf{t}$ of an HRS-tilt at a perfect torsion pair.

\begin{prop}\label{prop:ht-supports}
Let $R$ be a commutative noetherian ring, $\mathbf{t}=(\Tcal,\Fcal)$ a perfect torsion pair in $\mathsf{Mod}(R)$ with $\supp(\Tcal)=V$,
and $\Hcal_\mathbf{t}$ the associated heart by HRS-tilt. Then the $\Hcal_\mathbf{t}$-supports are
all the sets of the form $(W\cap V)\cup(W'\cap V^\co)$, for $W,W'\subseteq\Spec(R)$ specialisation
closed.
\end{prop}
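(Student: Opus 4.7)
The plan is to leverage the TTF triple $(\Fcal[1],\Tcal,\Ccal[1])$ in $\Hcal_\mathbf{t}$ from Lemma \ref{generalisation-closed support}(1) together with the lattice operations of Lemma \ref{lemma:cupcap of supports}, reducing the problem to classifying hereditary torsion classes contained in $\Tcal$ (with support $V$) and in $\Ccal[1]$ (with support $V^\co$) separately, each of which can be handled via Gabriel's theorem.

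For the realisation direction, given specialisation-closed $W,W'\subseteq\Spec(R)$, the subcategory $\Tcal_W:=\supp^{-1}(W)\cap\Hcal_\mathbf{t}$ is a hereditary torsion class by Theorem \ref{classification of tp in heart}(1)(a), and its support equals $W$: indeed, by Proposition \ref{some properties}(1) each $k(\mathfrak{p})[n_\mathfrak{p}]$ sits in $\Hcal_\mathbf{t}$ with support $\{\mathfrak{p}\}$, so it belongs to $\Tcal_W$ precisely when $\mathfrak{p}\in W$. By Lemma \ref{lemma:cupcap of supports}, the hereditary torsion class $(\Tcal_W\wedge\Tcal)\vee(\Tcal_{W'}\wedge\Ccal[1])$ then has support $(W\cap V)\cup(W'\cap V^\co)$.

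Conversely, let $\Scal$ be any hereditary torsion class of $\Hcal_\mathbf{t}$ with support $U$. By Lemma \ref{lemma:cupcap of supports}, the meets $\Scal\wedge\Tcal$ and $\Scal\wedge\Ccal[1]$ are hereditary torsion classes with supports $U\cap V$ and $U\cap V^\co$; Proposition \ref{some properties}(3)(b), combined with Theorem \ref{review torsion}(2) placing each $k(\mathfrak{p})$ either in $\Tcal$ or in the Giraud subcategory $\Ccal$, then gives $U=(U\cap V)\cup(U\cap V^\co)$. Since $\Tcal$ is a hereditary torsion class of $\Mod(R)$ whose abelian operations agree with those induced from $\Hcal_\mathbf{t}$ (by a direct triangle-level check on kernels, cokernels and extensions), $\Scal\wedge\Tcal$ is already a hereditary torsion class of $\Mod(R)$, and by Gabriel's theorem (Theorem \ref{review torsion}(1)) its support $U\cap V$ is specialisation-closed; one may set $W:=U\cap V$.

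The hard part is expressing $U\cap V^\co$ in the required form, since $\Ccal[1]$ does not sit inside $\Mod(R)$. My plan is to transfer to $\Ccal\subseteq\Mod(R)$: perfectness of $\mathbf{t}$ ensures via Lemma \ref{cosygyzy} that $\Ccal$ is a bireflective Grothendieck abelian subcategory of $\Mod(R)$, and a triangle-level inspection shows that the shift functor induces an equivalence of abelian categories $\Ccal\simeq\Ccal[1]$ compatible with the inclusions into $\D(R)$, hence preserving supports in $\Spec(R)$. Through this equivalence, $\Scal\wedge\Ccal[1]$ corresponds to a hereditary torsion class of $\Ccal$, which by the Gabriel-Giraud equivalence $\Ccal\simeq\Mod(R)/\Tcal$ corresponds in turn to a hereditary torsion class of $\Mod(R)$ containing $\Tcal$, parametrised by Gabriel by a specialisation-closed subset $W'\supseteq V$. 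Combining with $\supp(\Ccal)\subseteq V^\co$ (another consequence of perfectness, since $\mathfrak{p}V$-torsion is killed by the localisation at $\mathbf{t}$), the support of the corresponding class is $W'\cap V^\co$, whence $U\cap V^\co=W'\cap V^\co$, and the two parts together yield the claimed decomposition.
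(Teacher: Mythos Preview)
Your proof is correct; the overall skeleton (split $U$ into $U\cap V$ and $U\cap V^\co$ via the TTF triple, handle each piece separately) matches the paper, and your treatment of $U\cap V$ is essentially the same as the paper's item~(1). The genuine difference is in the $V^\co$ part.

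For $U\cap V^\co$, the paper gives an elementary argument: given $\mathfrak{p}\in U\cap V^\co$ and $\mathfrak{q}\in V^\co$ with $\mathfrak{p}\subseteq\mathfrak{q}$, it analyses the torsion decomposition of $E(R/\mathfrak{q})[1]$ in $\Hcal_\mathbf{t}$, shifts back to $\Mod(R)$, and uses the indecomposability of $E(R/\mathfrak{q})$ together with minimal injective resolutions to force $\mathfrak{q}\in U$; this directly shows $U\cap V^\co=\spcl(U\cap V^\co)\cap V^\co$. Your route instead transports the problem through the Gabriel--Giraud equivalence $\Ccal\simeq\Mod(R)/\Tcal$ and invokes Gabriel's classification of localising subcategories of a quotient. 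This is more conceptual and explains \emph{why} the answer is $W'\cap V^\co$ for some specialisation-closed $W'$, but it requires two verifications you only gesture at: that the abelian structure on $\Ccal[1]$ inherited from $\Hcal_\mathbf{t}$ agrees with the one transported from $\Ccal\subseteq\Mod(R)$ via the shift (so that $\Scal\cap\Ccal[1]$ really gives a hereditary torsion class of $\Ccal$), and that membership of $k(\mathfrak{p})$ for $\mathfrak{p}\in V^\co$ is preserved under the correspondence with localising subcategories of $\Mod(R)$ containing $\Tcal$. Both checks go through, but they are not entirely trivial; the paper's approach avoids them at the cost of a more hands-on computation with injectives.
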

\begin{proof}
We have already noted that $\Hcal_\mathbf{t}$  is the heart of an intermediate compactly generated
$t$-structure, so it follows from Theorem \ref{classification of tp in heart}(1)(a) that specialisation-closed
subsets are $\Hcal_\mathbf{t}$-supports. Since both $V$ and $V^\co$ are $\Hcal_\mathbf{t}$-supports
(see Lemma \ref{generalisation-closed support}), it follows from Lemma \ref{lemma:cupcap of supports} that the
subsets presented in the statement are indeed $\Hcal_\mathbf{t}$-supports. To prove the converse, let $U$ be a $\Hcal_\mathbf{t}$-support and let $\mathbf{t}'=(\Tcal',\Fcal')$
be the hereditary torsion pair in $\Hcal_\mathbf{t}$ with $\supp(\Tcal')=U$. 
We first show that
\begin{enumerate}
\item If $U\subseteq V$, then $U$ is specialisation-closed;
\item If $U\subseteq V^\co$, then $U=\spcl(U)\cap V^\co$.
\end{enumerate}

(1) If $U\subseteq V$, then we have that $\Tcal'\subseteq\Tcal\subseteq\Mod(R)$. We show that $\Tcal'$ is a
hereditary torsion class in $\Mod(R)$ as well, and so $U$ is specialisation-closed. Clearly $\Tcal'$ is closed
under extensions and coproducts in $\D(R)$, and thus it is so in $\Mod(R)$ as well. Let now 
\[\begin{tikzcd}
(\ast)\colon &0 \arrow{r} &X \arrow{r} &Y \arrow{r} &Z \arrow{r} &0
\end{tikzcd}\]
be a short exact sequence in $\Mod(R)$, with $Y$ in $\Tcal'\subseteq\Tcal$. Since $\Tcal$ is a
hereditary torsion class in $\Mod(R)$, both $X$ and $Z$ belong to $\Tcal\subseteq\Hcal_\mathbf{t}$,
so that $(\ast)$ is a short exact sequence in $\Hcal_\mathbf{t}$ as well. Now since $\Tcal'$ is a
hereditary torsion class in $\Hcal_\mathbf{t}$ we conclude that $X$ and $Z$ belong to $\Tcal'$, as wanted.

(2) The non-trivial inclusion is $U\supseteq\spcl(U)\cap V^\co$. Let $\Ccal$ denote the Giraud subcategory associated to $\mathbf{t}$, i.e. $\Ccal=\Tcal^{\perp_{0,1}}$. Given $\mathfrak
p$ in $U\subseteq V^\co$ and  $\mathfrak q$ in $V^\co$ such that $\mathfrak
p\subseteq\mathfrak q$, we will show that $\mathfrak q$ lies in $U$ as well.
Translating this in terms of objects of $\Hcal_\mathbf{t}$, consider the stalk complexes
$E(R/\mathfrak p)[1]$ and $E(R/\mathfrak q)[1]$ in $\Ccal[1]\subseteq\Hcal_\mathbf{t}$. By
assumption we have $E(R/\mathfrak p)[1]$ lies in $\Tcal'$, and we want to prove that $E(R/\mathfrak
q)[1]$ lies in $\Tcal'$ as well. Denote by $(\ast\ast)$ the torsion sequence of $E(R/\mathfrak
q)[1]$ with respect to $\mathbf{t}'$. Since, by Lemma \ref{generalisation-closed support}, $\Ccal[1]$ is a hereditary torsion class in
$\Hcal_\mathbf{t}$, we deduce that $(\ast\ast)$ has all its terms in $\Ccal[1]$.  Therefore,
applying a shift to it, we obtain the exact sequence of modules (in solid arrows) with $T$ in $\Tcal'[-1]$ and $F$ in $\Fcal'[-1]$.
\[\begin{tikzcd}
	(\ast\ast)[-1]\colon &0 \arrow{r} &T \arrow[dotted]{dr}\arrow{r} &E(R/\mathfrak q) \arrow[dotted, shift left]{d}\arrow{r} &F \arrow{r} &0 \\
	& & & E(T) \arrow[dotted, shift left]{u}
\end{tikzcd}\]

In $\Mod(R)$, consider then the injective envelope $E(T)$ of $T$: by injectivity, we get the two
dotted vertical arrows in the diagram above. Moreover, since the morphism $T\to E(T)$ is left
minimal, we conclude that $E(T)$ is a direct summand of the indecomposable module $E(R/\mathfrak
q)$. Now, we use our hypothesis that $\mathfrak p\subseteq\mathfrak q$ to notice that there is a
nonzero morphism $E(R/\mathfrak p)[1]\to E(R/\mathfrak q)[1]$. Since the source of this morphism is in $\Tcal'$, the
target cannot be in $\Fcal'$, and therefore $T\neq0$. Then, we must have an isomorphism $0\neq E(T)\simeq
E(R/\mathfrak q)$, which means that
\[\{\mathfrak q\}=\supp(E(R/\mathfrak q))=\supp(E(T))\subseteq\supp(T)=\supp(T[1])\subseteq U.\]

Returning to the general case of an arbitrary $\Hcal_\mathbf{t}$-support $U$, note that by Lemma
\ref{lemma:cupcap of supports} and Lemma \ref{generalisation-closed support}, both $U\cap V$ and $U\cap V^\co$
are $\Hcal_\mathbf{t}$-supports. Set $W:=U\cap V$ and $W':=\spcl(U\cap V^\co)$: the first is
specialisation-closed by item (1) above, while the second is specialisation-closed by definition.
Now by item (2) above it follows that
$U=(U\cap V)\cup(U\cap V^\co)=(W\cap V)\cup(W'\cap V^\co)$.
\end{proof}

\begin{rmk}
Item (3) of Theorem \ref{review torsion} states that in $\Mod(R)$ hereditary torsion pairs coincide
with those of finite type. We will see in the next section (see Corollary \ref{thm:her-criterion})
that the hearts $\Hcal_\mathbf{t}$ considered in this subsection are all derived equivalent to
$\mathsf{Mod}(R)$. Then by Corollary \ref{cor her ft in some hearts} (since every torsion pair
$\mathbf{t}$ in $\Mod(R)$ is restrictable and, thus, so is the $t$-structure
$\mathbb{T}_\mathbf{t}$, by Theorem \ref{Saorin}), we conclude that in $\Hcal_\mathbf{t}$ hereditary
torsion pairs of finite type correspond bijectively to specialisation closed subsets of $\Spec(R)$.
Proposition \ref{prop:ht-supports} then shows that if $\mathbf{t}$ is perfect then, in general, not
every hereditary torsion pair is of finite type (since not all $\Hcal_\mathbf{t}$-supports are
specialisation-closed). 
\end{rmk}

We conclude this subsection with an illustrating example.

\begin{ex}\label{ex:Z}
Let $R$ be a commutative noetherian ring of Krull dimension 1. In this case, every hereditary
torsion pair is perfect (see \cite[Corollary 4.3]{K} and \cite[Corollary 4.10]{AMSTV}). Let $V$
denote the set of maximal ideals of $R$. It is, of course, a specialisation-closed subset of $\Spec(R)$;
denote by $\mathbf{t}=(\Tcal,\Fcal)$ the associated hereditary torsion pair in $\mathsf{Mod}(R)$.
Let $\Hcal_\mathbf{t}:=\Fcal[1]\ast \Tcal$ be the heart of the HRS-tilt of the standard
$t$-structure with respect to $\mathbf{t}$. Following Proposition \ref{prop:ht-supports}, the
$\Hcal_\mathbf{t}$-supports are the sets of primes of the form $( W \cap V)\cup( W'\cap V^\co)$, for specialisation-closed subsets $W$ and $W'$ of $\Spec(R)$. However, it is quite easy to see that, since $R$ has Krull dimension 1, any subset of $\Spec(R)$ is of this form. Interestingly, this means that in this case hereditary torsion pairs in
$\Hcal_{\mathbf{t}}$ are in bijection with localising subcategories of $\D(R)$ (not only
the smashing ones, as it happens with hereditary torsion pairs in $\Mod(R)$). Concretely, following items (3)(a) and (3)(b) of
Theorem \ref{some properties}, the bijection is
\[ \begin{array}{rcl}
	\{\textit{hereditary torsion pairs in
	}\Hcal_{\mathbf{t}}\}&\overset{1:1}{\longleftrightarrow}&
		\{\textit{localising subcategories of }\D(R)\} \\
	(\Tcal,\Fcal)&\longmapsto&\{X\in\D(R)\colon H_{\mathbf{t}}^0(X[i])\in\Tcal\text{ for
	every }i\in\Zbb\}\\
	\Hcal_{\mathbf{t}}\cap\Lcal &\longmapsfrom& \Lcal
\end{array}\]
where $H^0_{\mathbf{t}}\colon \D(R)\to \Hcal_{\mathbf{t}}$ is the cohomology functor. In
particular, all localising subcategories of $\D(R)$ admit a cohomological description with respect to
$\Hcal_{\mathbf{t}}$. We will later prove that $\Hcal_{\mathbf{t}}$ is derived equivalent to $\Mod(R)$ (as a consequence of Corollary \ref{der eq comm ring}). This means that we get different insights on the triangulated structure of this derived category, depending on the abelian category that we start with. 
\end{ex}

\section{Torsion pairs inducing derived equivalences}

For this section we leave the commutative noetherian setting to explore the following
general notion for an abelian category $\Acal$ that admits a derived category $\D(\Acal)$. 
\begin{dfn}
We say that a torsion pair $\mathbf{t}=(\mathcal T,
\mathcal F)$ in $\Acal$ \textbf{induces a (bounded) derived equivalence} if the associated HRS-tilt in $\mathsf{D}(\mathcal{A})$ induces a (bounded) derived equivalence (as in Definition \ref{def induces der eq}).
\end{dfn}

In order to understand when a torsion pair induces (bounded) derived equivalence, we will make use
of a criterion by \cite{CHZ} (see \S \ref{sub:CHZ-sequences}). Before doing that, we will recall some homological
tools.

\subsection{Zero Yoneda extensions}

Fix two objects $X$ and $Y$ in an abelian category
$\Acal$. Recall that the elements of the Yoneda group $\Ext_\Acal^n(X,Y)$ (i.e. \textbf{Yoneda $n$-extensions}) are exact
sequences of the form
\[ \varepsilon\colon\qquad 0\to Y\to Z_1\to \cdots\to Z_n\to X\to 0,\]
which we call \textbf{$n$-sequences}, up to the equivalence relation generated by pairs of
exact sequences $(\varepsilon,\varepsilon')$ such that there is a morphism
\begin{equation}\label{extequiv}\nonumber
\begin{tikzcd}
	\varepsilon\colon \arrow{d} &0 \arrow{r} &Y \arrow[equal]{d}\arrow{r} &Z_1
		\arrow{d}\arrow{r} &\cdots \arrow{d}\arrow{r} &Z_n \arrow{d}\arrow{r}& X
		\arrow[equal]{d}\arrow{r} &0 \\
	\varepsilon'\colon &0 \arrow{r} &Y \arrow{r} &Z_1' \arrow{r} &\cdots \arrow{r}& Z_n' \arrow{r} &X \arrow{r} &0
\end{tikzcd}
\end{equation}
making the diagram commute. This is called a \textbf{morphism of $n$-sequences}.
In other words, two $n$-sequences are equivalent if and only if there exists a zigzag of morphisms of $n$-sequences linking them. In particular, if $n=1$, two 1-sequences are equivalent if and only
if they are isomorphic. For $n>1$, on the other hand, $n$-sequences are clearly in bijection with complexes
\[Z:=0\to Z_1\to \cdots\to Z_n\to 0\]
with $Z_n$ in degree zero and which have $H^{-n+1}(Z)=Y$, $H^0(Z)=X$, and are acyclic in all other
degrees.  It is then easy to observe that a morphism of $n$-sequences translates to a
quasi-isomorphism of the associated complexes, and viceversa. Since isomorphisms in the derived
category $\D(\Acal)$ are zigzags of quasi-isomorphisms of complexes, it follows that two
$n$-sequences are equivalent (and therefore, they represent the same Yoneda $n$-extension) if and
only if the associated complexes are isomorphic in $\D(\Acal)$.

\begin{lemma}\label{homological algebra}
Let $X$ and $Y$ be objects of an abelian category $\Acal$. Let $n>1$ and consider an exact sequence
\[\varepsilon\colon\qquad 0\to Y\to Z_1\to Z_2\to \cdots\to Z_n\to X\to 0.\]
Let $Z$ be the associated complex as above. 
Then $\varepsilon$ represents the zero element in $\Ext_\Acal^n(X,Y)$ if and only if the following truncation
triangle in $\D(\Acal)$ 
\[\tau^{\leq -1}Z\longrightarrow Z\longrightarrow \tau^{\geq 0}Z\longrightarrow (\tau^{\leq
-1}Z)[1]\]
where $\tau^{\leq -1}$ and $\tau^{\geq 0}$ are the truncation functors for the standard t-structure in $\mathsf{D}(\mathcal A)$, is a split triangle.
\end{lemma}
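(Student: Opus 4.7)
The plan is to identify the connecting morphism of the truncation triangle with the Yoneda class of $\varepsilon$ under the canonical isomorphism $\Ext^n_\Acal(X,Y)\simeq \Hom_{\D(\Acal)}(X,Y[n])$, and then invoke the elementary fact that a distinguished triangle splits if and only if its connecting morphism vanishes.

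First, I would pin down the terms of the truncation triangle concretely. Since $\varepsilon$ is exact and $Z_n$ sits in degree $0$, the complex $Z$ has cohomology concentrated in degrees $-(n-1)$ and $0$, namely $H^{-(n-1)}(Z)=\Ker(Z_1\to Z_2)=Y$ and $H^0(Z)=\Coker(Z_{n-1}\to Z_n)=X$. Consequently $\tau^{\geq 0}Z$ is just the stalk complex $X$ in degree $0$, while the complex underlying $\tau^{\leq -1}Z$, namely $0\to Z_1\to \cdots\to Z_{n-1}\to \im(Z_{n-1}\to Z_n)\to 0$ with last term in degree $-1$, is exact everywhere except in degree $-(n-1)$, where its cohomology equals $Y$; hence it is quasi-isomorphic to $Y[n-1]$. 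The truncation triangle therefore takes the form
\[ Y[n-1]\longrightarrow Z\longrightarrow X \xrightarrow{\ w\ } Y[n]. \]

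Next, I would check that $w\in \Hom_{\D(\Acal)}(X,Y[n])$ corresponds to the Yoneda class $[\varepsilon]$. For $n=1$ this is a tautology: the connecting morphism of the truncation triangle of a short exact sequence is by definition the associated extension class. For $n>1$ I would proceed by induction, splicing $\varepsilon$ as the Yoneda composition of the short exact sequence $\varepsilon_1\colon 0\to Y\to Z_1\to K\to 0$ (with $K=\Coker(Y\to Z_1)$) and the $(n-1)$-extension $\varepsilon_{n-1}\colon 0\to K\to Z_2\to \cdots\to Z_n\to X\to 0$. A straightforward comparison of the three associated truncation triangles expresses $w$ as the composite of the connecting morphisms attached to $\varepsilon_{n-1}$ and $\varepsilon_1$, which by the inductive hypothesis is the Yoneda product $[\varepsilon_1]\cdot[\varepsilon_{n-1}]=[\varepsilon]$. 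Finally, in any triangulated category a distinguished triangle $A\to B\to C\xrightarrow{w}A[1]$ is split if and only if $w=0$; applied to our triangle this yields $[\varepsilon]=0$ in $\Ext^n_\Acal(X,Y)$ if and only if $w=0$ if and only if the truncation triangle splits.

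The main obstacle is the middle step: although the identification of the truncation connecting morphism with the Yoneda class is classical (appearing in Verdier's thesis and in Keller's accounts of derived categories), the inductive comparison with Yoneda products requires some care with sign conventions and with the explicit construction of compatible maps of triangles. Should one prefer to bypass it, one may simply cite the standard description of the isomorphism $\Ext^n_\Acal(X,Y)\simeq \Hom_{\D(\Acal)}(X,Y[n])$ via truncation triangles, at which point the lemma collapses to a one-line application of the splitness criterion for distinguished triangles.
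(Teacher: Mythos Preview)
Your argument is correct, and in its final ``bypass'' form it is essentially what the paper does; however, the paper's main line of reasoning differs from your inductive identification of $w$ with $[\varepsilon]$. The paper exploits the paragraph preceding the lemma, where it is observed that two $n$-sequences are Yoneda-equivalent if and only if the associated complexes are isomorphic in $\D(\Acal)$. Since the complex associated to the trivial $n$-sequence is visibly $X\oplus Y[n-1]$, one gets immediately that $[\varepsilon]=0$ if and only if $Z\cong X\oplus Y[n-1]$ in $\D(\Acal)$, and this is equivalent to the truncation triangle splitting. The identification $w=[\varepsilon]$ is then mentioned only as a confirmation, not as the engine of the proof. Your inductive splicing argument is more self-contained and makes the correspondence $w\leftrightarrow[\varepsilon]$ explicit, at the cost of the bookkeeping you flag; the paper's route is shorter because it front-loads the work into the general ``equivalence of sequences $\Leftrightarrow$ isomorphism of complexes'' observation.
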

\begin{proof}
The zero element of $\Ext_\Acal^n(X,Y)$ is the equivalence class of the
$n$-sequence
\[0\longrightarrow Y\overset{1_Y}{\longrightarrow}Y\longrightarrow 0\longrightarrow \cdots\longrightarrow
0\longrightarrow X\overset{1_X}{\longrightarrow} X\longrightarrow 0\]
and its associated complex is, clearly, the direct sum $X\oplus Y[n-1]$. By the considerations above,
$\varepsilon$ represents the zero Yoneda $n$-extension if and only if the associated complex $Z$ is
isomorphic to $X\oplus Y[n-1]$. This occurs if and only if, for any degree $-n+1\leq i< 0$, the truncation triangle
\[\begin{tikzcd}
	\tau^{\leq i} Z \arrow{d}{\simeq}\arrow{r} &Z \arrow{r} &\tau^{\geq i+1}Z
	\arrow{d}{\simeq}\arrow{r}{w} &(\tau^{\leq i}Z)[1] \arrow{d}{\simeq}\\
	Y[n-1] & & X & Y[n]
\end{tikzcd}\]
splits or, equivalently, if and only if $w$ vanishes. As a confirmation, note that $w$ is
the morphism corresponding to the equivalence class of $\varepsilon$ under
the isomorphism $\Ext_\Acal^n(X,Y)\simeq\Hom_{\D(\Acal)}(X,Y[n])$.
\end{proof}

\subsection{CHZ-sequences}\label{sub:CHZ-sequences} We recall the main result of \cite{CHZ}, using the point of view expressed above. 

\begin{prop}[{\cite[Theorem A]{CHZ}}]\label{prop:CHZ}
Let $\mathcal A$ be an abelian category admitting a derived category $\D(\Acal)$, and let
$\mathbf{t}=(\mathcal T,\mathcal F)$ be a torsion pair in $\mathcal A$. The following are
equivalent:
\begin{enumerate}
\item $\mathbf{t}$ induces a bounded derived equivalence;
\item Every object $M$ of $\mathcal A$ admits an exact sequence of the form
	\begin{flalign*}
		\qquad\qquad\qquad\qquad\qquad \varepsilon_M\colon\qquad0\to F_M^0\to F_M^1\to M\to T_M^0\to T_M^1\to 0, &&
	\end{flalign*}
	with $T_M^0, T_M^1$ in $\mathcal T$ and $F_M^0, F_M^1$ in $\mathcal F$, which represents the zero
	element of $\Ext_{\mathcal A}^3(T_M^1,F_M^0)$; 
\item For every object $M$ of $\mathcal A$, there is a complex $Z_M:=F^1_M\to M \to T^0_M$
	with $T^0_M\in\Tcal$ in degree zero and $F^1_M\in\Fcal$, such that the truncation triangle
	\begin{flalign*}
		\qquad\qquad\qquad\qquad\qquad \Delta_M\colon\qquad\tau^{\leq -1}Z_M\longrightarrow Z_M\longrightarrow \tau^{\geq
		0}Z_M\longrightarrow (\tau^{\leq -1}Z_M)[1] &&
	\end{flalign*}
	splits, i.e. that $Z_M\cong \tau^{\leq -1}Z_M\oplus \tau^{\geq 0}Z_M$.
\end{enumerate}
Sequences as in (2) and complexes as in (3) will be called \textbf{CHZ-sequences} and \textbf{CHZ-complexes}.
\end{prop}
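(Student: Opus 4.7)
The plan is to establish the proposition in two stages: first prove (2) $\Leftrightarrow$ (3) as a direct consequence of Lemma \ref{homological algebra}, then prove (1) $\Leftrightarrow$ (2) using the criterion of Theorem \ref{prop:bound2unbound}(1). The second stage is the content of \cite[Theorem A]{CHZ}, so I follow the strategy there.

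For (2) $\Leftrightarrow$ (3), I observe that the CHZ-sequence $\varepsilon_M$ is a $3$-sequence in the sense of Lemma \ref{homological algebra}, with endpoints $Y = F^0_M$ and $X = T^1_M$. Its associated $3$-term complex, with $T^0_M$ placed in degree zero, is exactly $Z_M$, and its cohomologies are $H^{-2}(Z_M) = F^0_M \in \Fcal$, $H^{-1}(Z_M) = 0$, and $H^0(Z_M) = T^1_M \in \Tcal$. Under the canonical isomorphism $\Ext^3_\Acal(T^1_M, F^0_M)\cong \Hom_{\D(\Acal)}(T^1_M, F^0_M[3])$, Lemma \ref{homological algebra} identifies the vanishing of $[\varepsilon_M]$ with the splitting of $\Delta_M$. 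Conversely, given a splitting CHZ-complex $Z_M$, I take $F^0_M := \Ker(F^1_M \to M) \in \Fcal$ (as a subobject of the torsion-free $F^1_M$) and $T^1_M := \Coker(M \to T^0_M) \in \Tcal$ (as a quotient of the torsion $T^0_M$); exactness in the middle of the resulting $5$-term sequence can be derived from the decomposition $Z_M \cong \tau^{\leq -1}Z_M\oplus \tau^{\geq 0}Z_M$ together with the orthogonality $\Hom_\Acal(F^1_M, T^0_M) = 0$, and the lemma then forces the Yoneda class of this sequence to vanish.

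For (1) $\Leftrightarrow$ (2), I invoke Theorem \ref{prop:bound2unbound}(1): the functor $\realb_{\Tbb_\mathbf{t}}$ is an equivalence if and only if $\Ext^n_{\Hcal_{\mathbf{t}}}(X,Y) \cong \Hom_{\D(\Acal)}(X,Y[n])$ for all $X,Y$ in $\Hcal_{\mathbf{t}}$ and $n\geq 2$. Since $(\Fcal[1], \Tcal)$ is a torsion pair in $\Hcal_{\mathbf{t}}$, every object of $\Hcal_\mathbf{t}$ is built by extension from objects of $\Tcal$ and shifts $F[1]$ with $F \in \Fcal$, and a standard dimension-shifting argument reduces the comparison to pairs $(T, F[1])$ with $T \in \Tcal$, $F \in \Fcal$, in degree $n = 2$. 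Here $\Hom_{\D(\Acal)}(T, F[1][2]) = \Ext^3_\Acal(T, F)$, while $\Ext^2_{\Hcal_\mathbf{t}}(T, F[1])$ is computed inside $\Hcal_\mathbf{t}$ by resolutions built from $\Tcal$ and $\Fcal[1]$. The CHZ-sequence for $M$ is exactly the data witnessing that every potentially-ghost element of $\Ext^3_\Acal$ is effaced by a resolution in $\Hcal_\mathbf{t}$, via the splitting of the associated truncation triangle (i.e.\ the equivalent reformulation (3)). Conversely, assuming (1), viewing $M$ in $\D^b(\Acal) \simeq \D^b(\Hcal_\mathbf{t})$ and unfolding an appropriate $\Hcal_\mathbf{t}$-resolution back into $\Acal$ produces the desired CHZ-sequence, with the equivalence guaranteeing the required vanishing of the Yoneda class in $\Ext^3_\Acal$.

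The main obstacle is exactly the degree-$2$ comparison: showing that $\Ext^2_{\Hcal_{\mathbf{t}}}(T, F[1])$ computed inside the tilted heart captures all of $\Ext^3_\Acal(T, F)$, with the CHZ-condition providing the precise witness of this fact for every $M \in \Acal$. Higher-degree comparisons follow from this base case by iteration along the torsion pair $(\Fcal[1], \Tcal)$ in $\Hcal_\mathbf{t}$.
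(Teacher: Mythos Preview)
Your overall plan matches the paper's: defer (1)$\Leftrightarrow$(2) to \cite[Theorem~A]{CHZ}, and deduce (2)$\Leftrightarrow$(3) from Lemma~\ref{homological algebra} together with the closure of $\Fcal$ under subobjects and of $\Tcal$ under quotients. The direction (2)$\Rightarrow$(3) is fine.

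Your argument for (3)$\Rightarrow$(2), however, contains a genuine error. First, the orthogonality you invoke is reversed: for a torsion pair $(\Tcal,\Fcal)$ one has $\Hom_\Acal(\Tcal,\Fcal)=0$, not $\Hom_\Acal(\Fcal,\Tcal)=0$, so there is no reason for $\Hom_\Acal(F^1_M,T^0_M)$ to vanish. Second, and more seriously, the splitting of $\Delta_M$ does \emph{not} force $H^{-1}(Z_M)=0$. Take $\Tcal=0$, $\Fcal=\Acal$, and $Z_M=(0\to M\to 0)$: then $\tau^{\geq 0}Z_M=0$, the triangle splits trivially, yet $H^{-1}(Z_M)=M$. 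Your recipe produces the ``sequence'' $0\to 0\to 0\to M\to 0\to 0\to 0$, which is not exact at $M$. A CHZ-sequence for $M$ certainly exists here (e.g.\ $F^1_M=M$ with the identity map), but it does not arise from this particular $Z_M$ by taking kernel and cokernel. The paper's own treatment of this implication is admittedly terse, pointing only to the lemma and closure properties; but your specific claim---that exactness in the middle is forced by the splitting together with an orthogonality---is false as stated.

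For (1)$\Leftrightarrow$(2), the paper simply cites \cite{CHZ}. Your sketch via the $\Ext$-comparison criterion and effaceability is along the right lines and indeed mirrors the strategy of that reference, though it remains a sketch rather than a proof. A minor technical point: Theorem~\ref{prop:bound2unbound} as stated in this paper is for $\D(R)$ with $R$ a ring, whereas Proposition~\ref{prop:CHZ} concerns a general abelian category $\Acal$; the underlying realisation-functor criterion from \cite{BBD} does apply in this generality, but strictly speaking you are citing a result outside its stated scope.
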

\begin{proof}
The proof that (1) is equivalent to (2) is essentially the cited result of \cite{CHZ}.  The proof
that (2) and (3) are equivalent is an immediate consequence of Lemma \ref{homological algebra} and of the fact that $\Fcal$ is closed under subobjects and $\Tcal$ is closed under quotient objects. 
\end{proof}

Note that, in some cases, we can avoid checking the vanishing of the Yoneda class of a candidate
CHZ-sequence. For example, if the abelian category $\mathcal A$ has global dimension less or equal
than two, the Yoneda bifunctor $\Ext^3_{\mathcal A}(-,-)$ is identically zero, and thus the
Yoneda-vanishing condition is automatic. Similar phenomena happen for certain torsion pairs. For
example, a complete analysis for cohereditary torsion pairs in Grothendieck categories can be
deduced from \cite[Corollary 4.1]{CHZ}. We will later focus on the class of hereditary torsion
pairs. Firstly, however, let us observe that the criterion given by the proposition above can be
simplified for Grothendieck abelian categories. 

\begin{prop}\label{CHZ generator}
Let $\Gcal$ be a Grothendieck abelian category with a generator $G$ and let
$\mathbf{t}=(\mathcal T,\mathcal F)$ be a torsion pair in $\Gcal$. Then $\mathbf{t}$ induces a
bounded derived equivalence if and only if there is an exact sequence
\begin{equation*}\begin{tikzcd}
0 \arrow{r} &F_G^0 \arrow{r}{a} &F_G^1 \arrow{r}{b} &G \arrow{r}{c} &T_G^0 \arrow{r}{d} &T_G^1 \arrow{r} &0
\end{tikzcd}\end{equation*}
with $T_G^0, T_G^1$ in $\mathcal T$ and $F_G^0, F_G^1$ in $\mathcal F$, which represents the zero
element of $\Ext_R^3(T_G^1,F_G^0)$.
\end{prop}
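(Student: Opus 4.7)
The forward direction is immediate by applying Proposition \ref{prop:CHZ} to $M = G$. For the backward direction, my plan is to construct, from a CHZ-sequence for $G$, a CHZ-sequence for every $M \in \Gcal$; Proposition \ref{prop:CHZ} then yields the desired bounded derived equivalence.

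The first step is to propagate the CHZ-sequence from $G$ to every copower $G^{(I)}$ by taking coproducts. The crucial point is that in a Grothendieck category the torsion-free class $\Fcal$ is closed under coproducts: it is always closed under products and subobjects, and the canonical morphism $\bigoplus_i F_i \to \prod_i F_i$ is a monomorphism (as a filtered colimit of the monomorphisms $\bigoplus_{j\in J}F_j\to\prod_i F_i$ over finite $J\subseteq I$, direct limits being exact in a Grothendieck category). Combined with the closure of $\Tcal$ under coproducts, this ensures that the coproduct of the CHZ-sequence for $G$ is a valid CHZ-sequence for $G^{(I)}$; the truncation triangle of $Z_{G^{(I)}}$ in $\D(\Gcal)$, being the coproduct of the split truncation triangle for $Z_G$, remains split, so the $\Ext^3$-class vanishes.

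Next, for arbitrary $M\in\Gcal$, I fix an epimorphism $\pi\colon G^{(I)}\twoheadrightarrow M$ with kernel $K$ and write the CHZ-sequence for $G^{(I)}$ as $0 \to F^0 \to F^1 \to G^{(I)} \to T^0 \to T^1 \to 0$. Denoting by $\pi_*(K)$ the image of $K$ in $T^0$, I define $F^1_M := F^1$ (with map to $M$ obtained by composition with $\pi$), $T^0_M := T^0/\pi_*(K)$ (so that $G^{(I)}\to T^0_M$ factors through $M$, yielding a map $M\to T^0_M$), $F^0_M := \ker(F^1_M \to M)$, and $T^1_M := \Coker(M \to T^0_M)$. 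A short diagram chase shows that $F^0_M$ is a subobject of $F^1 \in \Fcal$ (hence in $\Fcal$), that $T^0_M$ and $T^1_M$ are quotients of $T^0\in\Tcal$ (hence in $\Tcal$), that the canonical map $T^1\to T^1_M$ is an isomorphism, and that the resulting 5-term sequence is exact.

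Finally, by Proposition \ref{prop:CHZ}(3) the $\Ext^3$-class of this sequence vanishes if and only if the truncation triangle of $Z_M = F^1_M \to M \to T^0_M$ splits in $\D(\Gcal)$. The natural chain map $\phi\colon Z_{G^{(I)}} \to Z_M$ (identity on $F^1$, $\pi$ in the middle degree, canonical quotient on $T^0$) induces the identity on top cohomology $T^1 \cong T^1_M$, so composing the splitting section $s\colon T^1 \hookrightarrow Z_{G^{(I)}}$ (provided by the first step) with $\phi$ produces a section $\phi\circ s\colon T^1_M \to Z_M$ of the truncation map $Z_M \to T^1_M$, by naturality of the truncation functor. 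The main technical care goes into tracking this compatibility through the chain map $\phi$; once it is verified, the splitting --- and hence the CHZ-condition for $M$ --- follows essentially formally.
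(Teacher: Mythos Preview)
Your proof is correct and follows essentially the same approach as the paper: both first pass to $G^{(I)}$ by taking coproducts (using that $\Tcal$ and $\Fcal$ are closed under coproducts and that coproducts of split triangles split), then for a given epimorphism onto $M$ build $Z_M$ by keeping $F^1$ and replacing $T^0$ by its quotient by the image of the kernel, and finally use the chain map $Z_{G^{(I)}}\to Z_M$---which induces an isomorphism on $H^0$---to transport the splitting. The only difference is cosmetic: the paper reduces to the case where $M$ is a quotient of $G$ itself, while you carry $G^{(I)}$ through the argument.
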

\begin{proof}
As in Proposition \ref{prop:CHZ}, we translate the existence of such a sequence to the existence of
a complex 
\[Z_G:=\quad\begin{tikzcd}F_G^1 \arrow{r}{b} &G \arrow{r}{c} &T_G^0\end{tikzcd}\]
with $T_G^0$ in degree zero for which the truncation triangle
($\Delta_G$) splits. First observe that since coproducts of split triangles are again split
and since both $\mathcal T$ and $\mathcal F$ are closed under coproducts, the existence of a
CHZ-complex is also guaranteed for any coproduct $M=G^{(I)}$, for any set $I$. In order to show that
such complexes exist for any $M$ in the category $\Gcal$, we may, therefore, suppose, without
loss of generality, that $M$ is a quotient of $G$ itself (otherwise, we could replace $G$ by
$G^{(I)}$ for a suitable set $I$).

Let then $f\colon G\to M$ be an epimorphism. Denote by $\overline{f}\colon \Coker(b)\to
\Coker(fb)$ the epimorphism induced by $f$. We define the following complex concentrated in degrees
$-2,-1$ and $0$,
\[Z_M:=\quad\begin{tikzcd}F_G^1 \arrow{r}{fb} &M \arrow{r}{\overline{c}} &T^0_G/K\end{tikzcd}\]
where $K=\Ker(\overline{f})$ and $\overline{c}$ is the composition of the projection $M\to
\Coker(fb)$ and the inclusion of $\Coker(fb)=\Coker(b)/K$ into $T^0_G/K$. Note that, by
construction, $\Ker(\overline{c})=\im(fb)$ and, thus, we have $H^{-1}(Z_M)=0$. Moreover, note that
$\Ker(fb)$ is a subobject of $F_G^1$ and that
\begin{equation}\label{eq:coker}\tag{$\ast$}
	\Coker(\overline{c})=(T^0_G/K)/(\Coker(b)/K)\cong T^0_G/\Coker(b)\cong T^1_G.
\end{equation}
This shows that $H^{-2}(Z_M)$ is torsionfree and $H^0(Z_M)$ is torsion. 
Finally, consider the map of complexes
\[\begin{tikzcd}
	Z_G\colon \arrow{d}[swap]{\varphi} &F_G^1 \arrow{r}{b}\arrow[equal]{d}
		&G \arrow{r}{c}\arrow{d}{f} &T_G^0 \arrow{d}{\pi}\\
	Z_M\colon &F_G^1 \arrow{r}{fb} &M \arrow{r}{\overline{c}} &T^0_G/K
\end{tikzcd}\]
where $\pi$ is the canonical projection. If we now consider the truncation triangles of $Z_G$ and
$Z_M$, as in Proposition \ref{prop:CHZ}, we get a morphism of triangles in $\D(\Gcal)$ as
follows:
\[\begin{tikzcd}
	F^0_G[2] \arrow{d}[swap]{\tau^{\leq -2}\varphi=H^{-2}(\varphi)[2]}\arrow{r}{a}
		&Z_G \arrow{rr}{d}\arrow{d}[swap]{\varphi} &
		&T^1_G \arrow{r}\arrow{d}[swap]{\tau^{\geq -1}\varphi=H^{0}(\varphi)}
		&F^0_G[3] \arrow{d}\\
	\Ker(fb)[2] \arrow{r}{\overline{a}} &Z_M \arrow{rr}{\overline{d}} &&T^1_G \arrow{r} &\Ker(fb)[3]
\end{tikzcd}\]
Finally, since $d$ is a split epimorphism and $H^0(\varphi)$ is an isomorphism (as seen in
\eqref{eq:coker}), we conclude that also
$\overline{d}$ is a split epimorphism, finishing our proof.
\end{proof}

\subsection{Hereditary torsion pairs inducing bounded derived equivalences}
We now focus on hereditary torsion pairs. The following lemma provides a simplification of the
criterion given in Proposition~\ref{prop:CHZ} for such torsion pairs.

\begin{lemma}\label{short CHZ}
Let $\mathbf{t}=(\mathcal T, \mathcal F)$ be a hereditary torsion pair in an abelian category
$\mathcal A$. If $\mathbf{t}$ is hereditary, then a CHZ-sequence for an object $M$ of $\mathcal A$
exists if and only if there exists a sequence of the form
\begin{equation*}\nonumber F_M\longrightarrow M\longrightarrow T_M\longrightarrow 0,\end{equation*}
with $T_M$ in $\mathcal T$ and $F_M$ in $\mathcal F$.
Such sequences will be called \textbf{short CHZ-sequences}.
\end{lemma}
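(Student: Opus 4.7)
My plan is to establish the two implications separately, using in a crucial way that heredity makes $\mathcal{T}$ closed under subobjects while torsionfreeness always makes $\mathcal{F}$ closed under subobjects.

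For the forward implication, suppose $M$ admits a CHZ-sequence
\[ 0\longrightarrow F_M^0\longrightarrow F_M^1\overset{b}{\longrightarrow} M\overset{c}{\longrightarrow} T_M^0\overset{d}{\longrightarrow} T_M^1\longrightarrow 0.\]
I would set $T_M:=\im(c)=\Ker(d)$, which is a subobject of $T_M^0\in\mathcal{T}$ and hence lies in $\mathcal{T}$ because $\mathbf{t}$ is hereditary. Setting $F_M:=F_M^1$, the sequence $F_M\to M\to T_M\to 0$ is exact by construction, providing a short CHZ-sequence.

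For the backward implication, suppose we are given a short CHZ-sequence $F_M\overset{\alpha}{\to} M\to T_M\to 0$. I would set $F_M^0:=\Ker(\alpha)$, $F_M^1:=F_M$, $T_M^0:=T_M$, and $T_M^1:=0$. Since $F_M\in\mathcal{F}$ and torsionfree classes are closed under subobjects, $F_M^0\in\mathcal{F}$; of course $0\in\mathcal{T}$. The resulting 5-term sequence
\[ 0\longrightarrow F_M^0\longrightarrow F_M^1\overset{\alpha}{\longrightarrow} M\longrightarrow T_M^0\longrightarrow 0\longrightarrow 0\]
is exact, and its Yoneda class lies in $\Ext_\Acal^3(T_M^1,F_M^0)=\Ext_\Acal^3(0,F_M^0)=0$, so the vanishing condition is automatic. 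Thus the sequence is a genuine CHZ-sequence for $M$.

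There is no real obstacle here; the argument is essentially a bookkeeping exercise once one observes the two key closure properties. The only point worth emphasising is why heredity is essential: without $\mathcal{T}$ being closed under subobjects, one could not guarantee that the truncated term $\im(c)$ in the forward direction lies in $\mathcal{T}$, and the short form would fail to be available.
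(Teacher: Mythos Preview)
Your proof is correct and follows essentially the same approach as the paper's: both truncate a CHZ-sequence at $\im(c)$ for the forward direction (using heredity), and both extend a short CHZ-sequence by adjoining the torsionfree kernel on the left and a zero on the right, noting that the resulting Yoneda class vanishes trivially.
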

\begin{proof}
If $\mathbf{t}$ is hereditary, any CHZ-sequence 
\begin{equation*}
	0\longrightarrow F_M^0\longrightarrow F_M^1\longrightarrow M\overset{b}{\longrightarrow} T_M^0\longrightarrow T_M^1\longrightarrow 0,
\end{equation*}
with $T_M^0, T_M^1$ in $\mathcal T$ and $F_M^0, F_M^1$ in $\mathcal F$,
 gives rise to a short
CHZ-sequence
\[F_M^1\longrightarrow M\longrightarrow \im(b)\longrightarrow 0,\]
where $\im(b)\leq T_M^0$ lies in $\Tcal$ because $\Tcal$ is closed under subobjects. Conversely,
since $\mathcal F$ is closed under subobjects, a short CHZ-sequence can be completed
with a torsion-free kernel on the left. Note that the class of the obtained exact sequence as a 3-extension is zero because
such a sequence is naturally a 2-extension; therefore, it is a CHZ-sequence.
\end{proof}

Given two objects $X$ and $M$ in a Grothendieck abelian category $\Gcal$, define the
\textbf{trace of $M$ in $X$} to be
\[\mathsf{tr}_M(X):=\sum\limits_{f\in \Hom_\Acal(M,X)}\im(f).\]
Notice that the trace of $M$ in $X$ is the biggest subobject of $X$ which is generated by $M$, in
the sense that it is a quotient of a coproduct of copies of $M$.

\begin{thm}\label{criterion Grothendieck}
Let $\Gcal$ be a Grothendieck abelian category with generator $G$, and let
$\mathbf{t}=(\Tcal,\Fcal)$ be a hereditary torsion pair in $\Gcal$ with torsion radical $t\colon
\Gcal\to \Tcal$. Then $\mathbf{t}$ induces a bounded derived equivalence if and only if
$G/\mathsf{tr}_{G/t(G)}(G)$ lies in $\Tcal$.
\end{thm}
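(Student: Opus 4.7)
The plan is to combine Proposition \ref{CHZ generator}, Lemma \ref{short CHZ} and Lemma \ref{torsionfree generator} in order to reduce the existence of a CHZ-sequence for the generator $G$ to an explicit condition on a distinguished subobject of $G$, namely the trace $\mathsf{tr}_{G/t(G)}(G)$.

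First, by Proposition \ref{CHZ generator}, $\mathbf{t}$ induces a bounded derived equivalence if and only if there exists a CHZ-sequence for $G$. Since $\mathbf{t}$ is hereditary, Lemma \ref{short CHZ} further reduces this to the existence of a short CHZ-sequence for $G$, that is, an exact sequence
\[F_G\longrightarrow G\longrightarrow T_G\longrightarrow 0\]
with $F_G\in\Fcal$ and $T_G\in\Tcal$. Setting $I:=\im(F_G\to G)$, this is equivalent to asking for a subobject $I\leq G$ such that $G/I\in\Tcal$ and $I$ is a quotient of some object of $\Fcal$.

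Next I would observe, via Lemma \ref{torsionfree generator}, that any object of $\Fcal$ is a quotient of a coproduct of copies of $G/t(G)$, and therefore so is any quotient of an object of $\Fcal$. Thus $I$ appears as a quotient of some object of $\Fcal$ if and only if $I\in\Gen(G/t(G))$. Since $\mathsf{tr}_{G/t(G)}(G)$ is by definition the largest subobject of $G$ lying in $\Gen(G/t(G))$, any admissible $I$ satisfies $I\subseteq\mathsf{tr}_{G/t(G)}(G)$, so that $G/\mathsf{tr}_{G/t(G)}(G)$ is a quotient of $G/I\in\Tcal$ and therefore belongs to $\Tcal$ (as $\Tcal$ is closed under quotients). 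This proves the ``only if'' direction.

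For the ``if'' direction, assume $G/\mathsf{tr}_{G/t(G)}(G)\in\Tcal$. I would choose a set $J$ and an epimorphism $(G/t(G))^{(J)}\twoheadrightarrow\mathsf{tr}_{G/t(G)}(G)$, and compose with the inclusion into $G$ to obtain a morphism $F_G:=(G/t(G))^{(J)}\to G$ whose cokernel is $G/\mathsf{tr}_{G/t(G)}(G)\in\Tcal$. The one point that needs a small justification is that $F_G$ lies in $\Fcal$: $G/t(G)$ is torsionfree, and in a Grothendieck category the canonical map from a coproduct to the corresponding product is a monomorphism, so that since $\Fcal$ is closed under products and under subobjects it is also closed under coproducts. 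Hence $F_G\in\Fcal$, providing the desired short CHZ-sequence, which by Lemma \ref{short CHZ} and Proposition \ref{CHZ generator} yields the bounded derived equivalence. The only mildly delicate point is this closure of $\Fcal$ under coproducts, which is where Grothendieck-ness is used.
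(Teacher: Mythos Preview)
Your proof is correct and follows essentially the same approach as the paper's own proof: both directions rely on Proposition~\ref{CHZ generator} and Lemma~\ref{short CHZ} to reduce to the existence of a short CHZ-sequence for $G$, use Lemma~\ref{torsionfree generator} to see that the image of any $F_G\in\Fcal$ in $G$ is contained in $\mathsf{tr}_{G/t(G)}(G)$, and construct the converse sequence from a coproduct $(G/t(G))^{(J)}$ surjecting onto the trace. Your explicit justification that $\Fcal$ is closed under coproducts (via the monomorphism from coproduct to product) is a point the paper leaves implicit.
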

\begin{proof}
If $G/\mathsf{tr}_{G/t(G)}(G)$ lies in $\Tcal$, let $I=\Hom_\Gcal(G/t(G),G)$ and $f\colon (G/t(G))^{(I)}\to G$
be the canonical morphism, whose image is precisely $\mathsf{tr}_{G/t(G)}(G)$. Then the sequence 
\[\begin{tikzcd}
	(G/t(G))^{(I)} \arrow{r}{f} &G \arrow{r} &G/\tr_{G/t(G)}(G) \arrow{r} &0
\end{tikzcd}\]
is a short CHZ-sequence for $G$ and, thus, by Lemma \ref{short CHZ} and Lemma \ref{CHZ generator},
we have that $\mathbf{t}$ induces a bounded derived equivalence. Conversely, if $\mathbf{t}$ induces a
bounded derived equivalence, then again by Lemma \ref{CHZ generator} and Lemma \ref{short CHZ} we have that
there is an exact sequence of the form
\[\begin{tikzcd}
	F_G \arrow{r}{h} &G \arrow{r} &T_G \arrow{r} &0
\end{tikzcd}\]
with $F_G$ in $\Fcal$ and $T_G$ in $\Tcal$.  Now, since by Lemma \ref{torsionfree generator} we know
that $F_G$
is a quotient of a coproduct of copies of $G/t(G)$, it follows that $\im(h)$ is contained in
$\mathsf{tr}_{G/t(G)}(G)$. Thus, $G/\mathsf{tr}_{G/t(G)}(G)$ is a quotient of $T_G$ and therefore it
lies in $\Tcal$, as wanted.
\end{proof}

\begin{cor}
Let $\Gcal$ be a Grothendieck category and $\mathbf{t}_1:=(\Tcal_1,\Fcal_1)$, $\mathbf{t}_2:=(\Tcal_2,\Fcal_2)$ be hereditary torsion
pairs in $\Gcal$ inducing bounded derived equivalence. Then their meet induces a bounded derived equivalence.
\end{cor}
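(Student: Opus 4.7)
The plan is to apply Theorem \ref{criterion Grothendieck} to the meet $\mathbf{t} := \mathbf{t}_1 \wedge \mathbf{t}_2$, whose torsion class is $\Tcal := \Tcal_1 \cap \Tcal_2$. Since each $\Tcal_i$ is closed under coproducts, quotients, extensions and subobjects, so is $\Tcal$; hence by the characterisation of (hereditary) torsion classes in Grothendieck categories recalled in Section~2, $\Tcal$ is the torsion class of a hereditary torsion pair $\mathbf{t}$ in $\Gcal$.

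Next I would identify the torsion radical $t$ of $\mathbf{t}$ in terms of the radicals $t_1, t_2$ of $\mathbf{t}_1, \mathbf{t}_2$, by showing that $t(X) = t_1(X) \cap t_2(X)$ for every $X$ in $\Gcal$. Indeed, $t_1(X) \cap t_2(X)$ is a subobject of each $t_i(X)$, so it lies in $\Tcal_i$ by heredity, hence in $\Tcal$; conversely, $t(X) \in \Tcal \subseteq \Tcal_i$ is a subobject of $X$, so $t(X) \subseteq t_i(X)$. In particular, $t(G) \subseteq t_i(G)$ gives canonical epimorphisms $\pi_i \colon G/t(G) \twoheadrightarrow G/t_i(G)$ for $i=1,2$.

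The key observation is then that precomposition with $\pi_i$ takes any morphism $f \colon G/t_i(G) \to G$ to a morphism $f\circ\pi_i \colon G/t(G) \to G$ with the same image (as $\pi_i$ is epic), yielding
\[ \tr_{G/t_i(G)}(G) \subseteq \tr_{G/t(G)}(G) \qquad (i=1,2). \]
Consequently, $G/\tr_{G/t(G)}(G)$ is a quotient of $G/\tr_{G/t_i(G)}(G)$. By hypothesis, Theorem \ref{criterion Grothendieck} applied to $\mathbf{t}_i$ says that $G/\tr_{G/t_i(G)}(G) \in \Tcal_i$; since $\Tcal_i$ is closed under quotients, we conclude $G/\tr_{G/t(G)}(G) \in \Tcal_1 \cap \Tcal_2 = \Tcal$. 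A final application of Theorem \ref{criterion Grothendieck}, this time to $\mathbf{t}$, yields that the meet induces a bounded derived equivalence.

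I do not anticipate a significant obstacle: the argument reduces, via Theorem \ref{criterion Grothendieck}, to a transparent trace comparison powered by the canonical surjection $G/t(G) \twoheadrightarrow G/t_i(G)$. The step demanding the most attention is the identification $t(X) = t_1(X) \cap t_2(X)$, which is nonetheless standard.
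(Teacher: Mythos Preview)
Your proof is correct and follows essentially the same route as the paper's: both apply Theorem~\ref{criterion Grothendieck} by using the canonical epimorphism $G/t(G)\twoheadrightarrow G/t_i(G)$ to deduce $\tr_{G/t_i(G)}(G)\subseteq\tr_{G/t(G)}(G)$, hence $G/\tr_{G/t(G)}(G)\in\Tcal_1\cap\Tcal_2$. Your identification $t(X)=t_1(X)\cap t_2(X)$ is correct but slightly more than needed; the paper observes directly that $t_3(G)\in\Tcal_3\subseteq\Tcal_i$ is a subobject of $G$, hence $t_3(G)\subseteq t_i(G)$, which is all the argument requires.
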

\begin{proof}
Let $\mathbf{t}_3:=(\Tcal_3:=\Tcal_1\cap\Tcal_2,\Fcal_3)$ be the meet of $\mathbf{t}_1$ and $\mathbf{t}_2$.
It is again hereditary, so we can apply Theorem
\ref{criterion Grothendieck}.  Let $G$ be a generator of $\Gcal$, and $t_i(G)$ the torsion
part of $G$ with respect to $\mathbf{t}_i$, for $i=1,2,3$.

Since $t_3(G)$ lies in $\mathcal T_3\subseteq\mathcal T_1$, it is a subobject of $t_1(G)$, so that
$G/t_3(G)\twoheadrightarrow G/t_1(G)$. It follows that
$\tr_{G/t_1(G)}(G)\hookrightarrow\tr_{G/t_3(G)}(G)$, and $G/\tr_{G/t_1(G)}(G) \twoheadrightarrow
G/\tr_{G/t_3(G)}(G)$.  Now the source of this epimorphism is in $\mathcal T_1$, so the target is as
well; a similar argument for $t_2(G)$ and $t_3(G)$ shows then that
$G/\tr_{G/t_3(G)}(G)$ belongs to $\mathcal T_1\cap\mathcal T_2=\mathcal T_3$.
\end{proof}

Let us now restrict our setting to the Grothendieck abelian category $\Mod(R)$ of right $R$-modules over a
ring $R$. Recall that given a right $R$-module $M$, its
\textbf{(right) annihilator} is the ideal of $R$ 
\[\mathsf{Ann}(M_R):=\{r\in R:Mr=0\}.\]
Similarly, we define the annihilator of a left module ${}_RM$. Notice, however, that the annihilator of a
module, be it right or left, is always a two-sided ideal. 

\begin{prop}
Let $R$ be a ring and $\mathbf{t}=(\Tcal,\Fcal)$ a torsion pair in $\mathsf{Mod}(R)$. Let
$K_\mathbf{t}$ be the two-sided ideal obtained as the intersection of all the annihilators of
modules in $\Fcal$. The following statements hold:
\begin{enumerate}
\item $R/K_\mathbf{t}$ lies in $\mathcal F$;
\item $\Fcal$ is a subcategory of $\Mod(R/K_\mathbf{t})$, where
	$\Mod(R/K_\mathbf{t})$ is identified with the subcategory of $R$-modules annihilated by
	$K_\mathbf{t}$;
\item $t(R)$ coincides with $K_\mathbf{t}$.
\end{enumerate}
\end{prop}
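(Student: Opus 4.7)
My plan is to prove the three statements in order, with (1) as the main technical step and (2), (3) following formally.

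For (1), the key observation is that for every $F\in\Fcal$, the quotient $R/\Ann(F)$ itself lies in $\Fcal$. Indeed, the map $R\to F^{F}$ sending $r\mapsto (fr)_{f\in F}$ has kernel exactly $\Ann(F)$, so $R/\Ann(F)$ embeds into $F^F$; since $\Fcal$ is a torsionfree class it is closed under products and subobjects, hence $R/\Ann(F)\in\Fcal$. Then the canonical morphism $R\to \prod_{F\in\Fcal} R/\Ann(F)$ has kernel $\bigcap_{F\in\Fcal}\Ann(F)=K_\mathbf{t}$, so $R/K_\mathbf{t}$ embeds into the product, which lies in $\Fcal$ by the same closure properties. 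Therefore $R/K_\mathbf{t}\in\Fcal$.

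Statement (2) is immediate from the definition of $K_\mathbf{t}$: if $F\in\Fcal$, then $K_\mathbf{t}\subseteq\Ann(F)$, so $FK_\mathbf{t}=0$ and $F$ is naturally an $R/K_\mathbf{t}$-module.

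For (3), I use (1) in both inclusions. Since $R/K_\mathbf{t}\in\Fcal$, the canonical epimorphism $R\to R/K_\mathbf{t}$ must vanish on $t(R)$ (as $\Hom_R(t(R),R/K_\mathbf{t})=0$), giving $t(R)\subseteq K_\mathbf{t}$. Conversely, $R/t(R)\in\Fcal$, so $K_\mathbf{t}\subseteq \Ann(R/t(R))$; evaluating at the class of $1\in R$ we get $k=1\cdot k\in t(R)$ for every $k\in K_\mathbf{t}$, whence $K_\mathbf{t}\subseteq t(R)$.

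The only step requiring genuine thought is showing $R/\Ann(F)\in\Fcal$ via the embedding into $F^F$; all remaining arguments are formal applications of the closure properties of torsionfree classes and of the definition of the torsion radical. Note that the whole argument never uses that $\mathbf{t}$ is hereditary, consistent with the statement, and that it works equally well over a non-commutative ring since $K_\mathbf{t}$ is automatically a two-sided ideal (if $k\in K_\mathbf{t}$, $r\in R$ and $F\in\Fcal$, then $F(rk)=(Fr)k\subseteq Fk=0$, so $rk\in\Ann(F)$ for every $F\in\Fcal$).
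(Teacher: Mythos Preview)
Your argument is correct and follows essentially the same route as the paper. There is one small set-theoretic wrinkle in your proof of (1): the product $\prod_{F\in\Fcal} R/\Ann(F)$ is indexed over a proper class, so it is not a priori an object of $\Mod(R)$. This is easily repaired, since the set $\{\Ann(F):F\in\Fcal\}$ of ideals of $R$ is genuinely a set, and you may take the product over that instead. The paper avoids the issue differently: for each $r\in R\setminus K_\mathbf{t}$ it chooses a single witness $F_r\in\Fcal$ and $f_r\in F_r$ with $f_r r\neq 0$, and then maps $R\to\prod_{r\in R\setminus K_\mathbf{t}}F_r$ via $1\mapsto(f_r)_r$, a set-indexed product with kernel $K_\mathbf{t}$. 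Your proof of (3) is a slightly more direct rephrasing of the paper's argument (which builds the two factorisations $R/t(R)\leftrightarrows R/K_\mathbf{t}$ explicitly), but the content is identical.
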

\begin{proof}
(1) For every element $r$ in $R\setminus K_\mathbf{t}$, there is an object $F_r$ in $\mathcal F$ and an element
$f_r$ in $F_r$ such that $f_rr\neq 0$. Consider the torsionfree module $F:=\prod_{r\in R\setminus
K} F_r$, and the morphism $R\to F$ defined by $1\mapsto (f_r)_r$. Its kernel is clearly
$K_\mathbf{t}$, so it induces an embedding (of right $R$-modules) $R/K_\mathbf{t}\hookrightarrow F$. Hence
$R/K_\mathbf{t}$ lies in $\mathcal F$.

(2) The claim follows from the fact that $K_\mathbf{t}$ annihilates every module in $\Fcal$ by
definition.

(3) Consider the diagram:
\[\begin{tikzcd}
	0 \arrow{r} &t(R) \arrow{r} & R \arrow[equal]{d}\arrow{r}{p_1}
		&R/t(R) \arrow[dotted, shift right, swap]{d}{c}\arrow{r} &0\\
	0 \arrow{r} &K_\mathbf{t} \arrow{r} &R \arrow{r}{p_2}
		&R/K_\mathbf{t} \arrow[dotted, shift right, swap]{u}{d}\arrow{r} &0
\end{tikzcd}\]
Since $R/t(R)$ lies in $\mathcal F$, which is contained in $\Mod(R/K_\mathbf{t})$ by (2), the
map $p_1$ factors through $R/K_\mathbf{t}$, yielding the dotted epimorphism $d$. On the other
hand, $R/K_\mathbf{t}$ is torsionfree (by (1)), and so the map $p_2$ factors through $R/t(R)$,
giving the dotted epimorphism $c$. Finally it is an easy verification that $c$ and $d$ are inverse
to each other, showing that indeed $K_\mathbf{t}=t(R)$ as ideals of $R$.
\end{proof}

\begin{rmk}
Combining items (2) and (3) of the previous proposition one recovers Lemma \ref{torsionfree generator}.
\end{rmk}

We can finally apply Theorem \ref{criterion Grothendieck} to the case of a module category.

\begin{thm}\label{thm:her-criterion}
Let $R$ be a ring and let $\mathbf{t}=(\mathcal T,\mathcal F)$ be a hereditary torsion pair in
$\Mod(R)$. Then $\mathbf{t}$ induces a bounded derived equivalence if and only if $R/\Ann({}_Rt(R))$
lies in $\Tcal$. This holds in particular whenever ${}_Rt(R)$ is finitely generated (as a left
$R$-module) and $\Ann({}_Rt(R))=\Ann(t(R)_R)$.
\end{thm}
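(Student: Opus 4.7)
The plan is to specialise Theorem \ref{criterion Grothendieck} to the canonical generator $G=R$ of the Grothendieck category $\Mod(R)$. This reduces the problem to identifying the trace ideal $\tr_{R/t(R)}(R)$ explicitly and then checking under which hypotheses the quotient of $R$ by this ideal lies in $\mathcal T$.

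First I would compute the trace. A right $R$-linear morphism $f\colon R/t(R)\to R$ is determined by $a=f(\overline 1)\in R$, and well-definedness amounts to $as=0$ for every $s\in t(R)$, i.e.\ $a\in\Ann({}_Rt(R))$ (the annihilator of $t(R)$ viewed as a \emph{left} $R$-module via internal multiplication). The image of such an $f$ is the right ideal $aR$; since $\Ann({}_Rt(R))$ is a two-sided ideal, the sum of these images over all admissible $a$ equals $\Ann({}_Rt(R))$ itself. Hence $\tr_{R/t(R)}(R)=\Ann({}_Rt(R))$, and substituting into Theorem \ref{criterion Grothendieck} yields the first equivalence of the statement.

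For the \emph{in particular} clause, I would show that the additional hypotheses force $R/\Ann({}_Rt(R))\in\mathcal T$. Picking finitely many left generators $s_1,\ldots,s_n$ of ${}_Rt(R)$, consider the right $R$-module morphism $\psi\colon R\to t(R)^n$, $r\mapsto(s_1r,\ldots,s_nr)$. Its kernel consists of those $r$ with $s_ir=0$ for all $i$, which, by the choice of generators, is exactly $\Ann(t(R)_R)$ (the \emph{right} annihilator of $t(R)$). Hence $\psi$ factors through an embedding $R/\Ann(t(R)_R)\hookrightarrow t(R)^n$ of right $R$-modules. Since $t(R)\in\mathcal T$ and $\mathcal T$ is hereditary and closed under coproducts, we deduce $R/\Ann(t(R)_R)\in\mathcal T$; the assumed equality $\Ann({}_Rt(R))=\Ann(t(R)_R)$ then converts this into the criterion of the first part.

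The main (though mild) subtlety is the careful left/right bookkeeping for annihilators: the trace computation naturally produces the \emph{left} annihilator of $t(R)$, while the finite-generation embedding naturally produces the \emph{right} annihilator, and the two hypotheses of the "in particular" statement appear in tandem precisely to bridge these two flavours.
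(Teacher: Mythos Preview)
Your proof is correct and follows essentially the same route as the paper: you specialise Theorem~\ref{criterion Grothendieck} to $G=R$, identify $\tr_{R/t(R)}(R)$ with $\Ann({}_Rt(R))$ via the same computation, and for the second assertion build the same embedding $R/\Ann(t(R)_R)\hookrightarrow t(R)^{\oplus n}$ from a finite set of left generators. Your explicit remark on the left/right bookkeeping is a nice clarification that the paper leaves implicit.
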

\begin{proof}
The first assertion follows immediately from Theorem \ref{criterion Grothendieck} once we observe that
$\tr_{R/t(R)}(R)=\Ann({}_Rt(R))$.
Indeed, by definition an element $r$ of $R$ lies in $\tr_{R/t(R)}(R)$ if and only if it lies in the image of a
morphism $f\colon R/t(R)\to R$: and this happens if and only if $rt(R)=0$, i.e. if
$r$ lies in $\Ann({}_Rt(R))$.

For the second statement, let $x_1,...,x_n$ be elements in $R$ such that
${}_Rt(R)=\sum_{k=1}^nRx_k$, and consider the morphism of right $R$-modules
$\alpha\colon R\to t(R)^{\oplus n}$ defined by $1\mapsto (x_1,\dots, x_n)$. Then its kernel is
$\Ann(t(R)_R)$. Indeed, if $r$ is in $\Ann(t(R)_R)$ then $x_kr=0$ for all $1\leq k\leq n$, and so
$\alpha(r)=(x_1r,\dots,x_nr)=0$. Conversely, if $r\in\ker \alpha$, then $x_kr=0$ for all $1\leq
k\leq n$. Now every $x$ in $t(R)$ can be written as $\sum_{k=1}^n r_kx_k$, so $xr=\sum_{k=1}^n
(r_kx_k)r=0$. By our hypothesis, then, we also have $\ker \alpha=\Ann({}_Rt(R))$. Therefore,
$\alpha$ induces a monomorphism of right $R$-modules
\[R/\Ann({}_Rt(R))=R/\Ann(t(R)_R)\hookrightarrow t(R)^{\oplus n}.\]
Since the latter is a torsion module and $\mathbf{t}$ is hereditary, we conclude that
$R/\Ann({}_Rt(R))$ is torsion.
\end{proof}

\begin{cor}\label{der eq comm ring}
Let $R$ be a noetherian ring and suppose it is either commutative or semiprime. Then every
hereditary torsion pair induces an unbounded derived equivalence.
\end{cor}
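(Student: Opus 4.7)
The plan is to invoke the sufficient condition provided in the second assertion of Theorem \ref{thm:her-criterion} to obtain a bounded derived equivalence in each case, and then upgrade it to an unbounded one via Theorem \ref{prop:bound2unbound}(2). Let $\mathbf{t}=(\Tcal,\Fcal)$ be a hereditary torsion pair in $\Mod(R)$. I would first observe that $t(R)$ is a two-sided ideal of $R$: since $t$ is a subfunctor of the identity on $\Mod(R)$, the submodule $t(R)$ is stable under every right $R$-module endomorphism of $R_R$, and in particular under left multiplication by any element of $R$. Because $R$ is noetherian (on both sides, as is standard in the noncommutative setting), ${}_R t(R)$ is finitely generated, so it remains only to verify the annihilator equality $\Ann({}_R t(R))=\Ann(t(R)_R)$.

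The commutative case is immediate, since left and right actions on $t(R)$ coincide. For the semiprime case, set $I:=t(R)$ and suppose $r\in\Ann({}_R I)$, i.e.\ $rI=0$. Then $Ir$ is a left ideal of $R$ (as $I$ is one) and $(Ir)^2=I(rI)r=0$. A standard fact is that a semiprime ring admits no nonzero nilpotent one-sided ideals: if $J$ is a left ideal with $J^n=0$, then the inclusion $RJ\subseteq J$ yields $(JR)^n = J(RJ)^{n-1}R \subseteq J^n R=0$, so $JR$ is a nilpotent two-sided ideal, hence zero, forcing $J\subseteq JR=0$. Applied to $Ir$ this gives $Ir=0$, i.e.\ $r\in\Ann(I_R)$; the reverse inclusion is symmetric. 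By Theorem \ref{thm:her-criterion} we conclude that $\mathbf{t}$ induces a bounded derived equivalence in both cases.

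To pass from bounded to unbounded, I would combine Remark \ref{rmk:here-finitetype} (every hereditary torsion pair in $\Mod(R)$ is of finite type when $R$ is noetherian) with Theorem \ref{Saorin}(1), which then implies that the HRS-tilt $\Tbb_\mathbf{t}$ is smashing with Grothendieck heart. Since $\Tbb_\mathbf{t}$ sits between $\Dbb^{\leq -1}$ and $\Dbb^{\leq 0}$ it is also intermediate, so Theorem \ref{prop:bound2unbound}(2) lifts the bounded equivalence to the unbounded one. The main obstacle of the argument is the annihilator equality in the noncommutative semiprime case, which rests on the nilpotent-ideal lemma above; the rest is a formal assembly of machinery already developed in the paper.
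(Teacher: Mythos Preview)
Your proof is correct and follows essentially the same route as the paper's: invoke the second clause of Theorem~\ref{thm:her-criterion} using that ${}_Rt(R)$ is finitely generated (left noetherianity) and that left and right annihilators of ideals agree for commutative or semiprime rings, then upgrade to an unbounded equivalence via Remark~\ref{rmk:here-finitetype}, Theorem~\ref{Saorin}(1), and Theorem~\ref{prop:bound2unbound}(2). The only difference is that you spell out the annihilator equality in the semiprime case via the nilpotent-ideal argument, whereas the paper simply asserts it.
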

\begin{proof}
The fact that a hereditary torsion pair $\mathbf{t}$ of $\Mod(R)$ induces bounded derived equivalence follows immediately from
Theorem \ref{thm:her-criterion}; indeed, for commutative or semiprime rings, left and right
annihilators of ideals coincide and, furthermore, since $R$ is left noetherian, ${}_Rt(R)$ is
finitely generated.

Now, the $t$-structure obtained by HRS-tilting $\Mod(R)$ at $\mathbf{t}$ is smashing with
Grothendieck heart by Remark \ref{rmk:here-finitetype} and item (1) of Theorem \ref{Saorin};
therefore by applying item (2) of Proposition \ref{prop:bound2unbound} we get that $\mathbf{t}$
induces an unbounded derived equivalence.
\end{proof}

The corollary above has a direct implication in silting theory for commutative noetherian rings.

\begin{cor}\label{2-term cosilting is cotilting}
Every two-term cosilting complex over a commutative noetherian ring is cotilting.
\end{cor}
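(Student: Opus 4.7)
The plan is to combine the HRS-tilting description of two-term cosilting $t$-structures with the derived equivalence theorem for hereditary torsion pairs over commutative noetherian rings (Corollary \ref{der eq comm ring}).

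First I would reduce the statement to one about HRS-tilts. A two-term cosilting complex $C$ gives rise to a cosilting $t$-structure $\Tbb = ({}^{\bot_{\leq 0}}C, {}^{\bot_{>0}}C)$ which, up to shift, is intermediate and sits between two consecutive shifts of the standard $t$-structure $\Dbb$. Such a two-term intermediate $t$-structure is precisely an HRS-tilt of the standard $t$-structure at a torsion pair $\mathbf{t}=(\Tcal,\Fcal)$ in $\Mod(R)$ (the torsion class and torsionfree class are recovered as $H^0$ and $H^{-1}[1]$ of the heart). Since $\Tbb$ is cosilting, Theorem \ref{thm:$t$-structure-types} guarantees that its heart is Grothendieck, so by Theorem \ref{Saorin}(1) the torsion pair $\mathbf{t}$ is of finite type in $\Mod(R)$.

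Next I would invoke the equivalence between finite type and hereditary for torsion pairs in modules over a commutative noetherian ring (Theorem \ref{review torsion}(3)): the torsion pair $\mathbf{t}$ is therefore hereditary. Applying Corollary \ref{der eq comm ring} we conclude that $\mathbf{t}$ induces a (bounded, hence unbounded) derived equivalence between $\Hcal_\mathbf{t}$ and $\Mod(R)$; in other words, $\Tbb$ induces a derived equivalence.

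Finally I would appeal to the fact, already used in the proof of Theorem \ref{prop:bound2unbound} and stemming from \cite[Corollary 5.2]{PV}, that an intermediate cosilting $t$-structure which induces a bounded derived equivalence is automatically cotilting. Applying this to $\Tbb$ shows that $C$ is a cotilting complex, as claimed. The only step that requires genuine input from the paper's new results is the derived equivalence supplied by Corollary \ref{der eq comm ring}; everything else is assembling known pieces (HRS-tilt description of two-term cosilting, Saorín's finite-type criterion, and the PV result linking bounded derived equivalence to the cotilting property). I expect no serious obstacle beyond verifying that the shift conventions line up so that a two-term complex really does produce an HRS-tilt of $\Dbb$ at a torsion pair of $\Mod(R)$.
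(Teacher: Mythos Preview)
Your proposal is correct and follows essentially the same route as the paper: identify the two-term cosilting $t$-structure as an HRS-tilt of $\Mod(R)$ at a hereditary torsion pair, apply Corollary~\ref{der eq comm ring} to obtain the derived equivalence, and conclude via \cite[Corollary~5.2]{PV}. The only minor point to tighten is your appeal to Theorem~\ref{thm:$t$-structure-types}, which requires the cosilting object to be pure-injective---this holds for bounded (hence two-term) cosilting objects by \cite[Proposition~3.10]{MV}; the paper sidesteps this by citing \cite{AH} directly for the bijection between two-term cosilting complexes and hereditary torsion pairs.
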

\begin{proof}
If $R$ is a commutative noetherian ring, then the HRS-tilting $t$-structure at any hereditary torsion pair in $\mathsf{Mod}(R)$ is a cosilting $t$-structure associated with a two-term cosilting complex (\cite[Corollary 4.1, Lemma 4.2]{AH}). Since, by Corollary \ref{der eq comm ring}, this $t$-structure induces a derived equivalence, the two-term cosilting complex must be cotilting (\cite[Corollary 5.2]{PV}).
\end{proof}

\section{$t$-structures inducing derived equivalences for commutative noetherian rings}

We turn now our attention back to compactly generated $t$-structures in the derived category of a
commutative noetherian ring $R$. In particular, we aim to find sufficient conditions for a given
intermediate compactly generated $t$-structure to induce a derived equivalence. 

\begin{rmk}\label{rmk:comm-noeth-unbounded}

Recall that by Theorem \ref{thm:$t$-structure-types}, intermediate compactly generated $t$-structures
in $\D(R)$  satisfy the hypotheses of item (2) of Proposition
\ref{prop:bound2unbound}. Therefore, their heart is derived equivalent to $\Mod(R)$ if and only if
it is bounded derived equivalent. This includes the case of the HRS-tilt of an intermediate
compactly generated $t$-structure at a torsion pair of finite type (see item (1) of Theorem
\ref{Saorin} and Theorem \ref{t-st comm}).
In the following we will use this fact without an explicit
mention.
\end{rmk}

\subsection{Sufficient conditions for derived equivalence}
Let $R$ be a commutative noetherian ring, and consider a hereditary torsion pair $\mathbf{t}$ in
$\Mod(R)$. If $\Tbb_\mathbf{t}$ denotes the $t$-structure obtained by HRS-tilting
$\Mod(R)$ at $\mathbf{t}$, with heart $\Hcal_\mathbf{t}$, we know that:
\begin{itemize}
\item $\Tbb_\mathbf{t}$ is an intermediate compactly generated $t$-structure (Remark
\ref{rmk:hrs-at-hereditary});
\item $\Hcal_\mathbf{t}$ is a locally coherent Grothendieck category (Theorem \ref{Saorin}, Remark \ref{rmk:here-finitetype} and
Theorem \ref{review torsion}(3));
\item $\Hcal_\mathbf{t}$ is derived equivalent to $\Mod(R)$ (Corollary \ref{der eq comm ring}).
\end{itemize}

This subsection takles the question of whether we can proceed with a chain of HRS-tilts at suitable
torsion pairs, so that all the obtained $t$-structures will retain these properties. For this purpose, we need to first extend the scope of Theorem \ref{criterion Grothendieck} in order to apply it not only to HRS-tilts of the heart of the standard t-structure but also to hearts of t-structures inducing derived equivalences.

\begin{lemma}\label{adaptation} Let $R$ be a ring and $\mathcal A$ the heart of an intermediate t-structure $\mathbb{T}=(\Ucal,\Vcal)$ in $\mathsf{D}(R)$ such that $\mathbb{T}$ induces a derived equivalence. Given a torsion pair $\mathbf{t}$ in $\mathcal{A}$, let $\mathbb{S}$ be the t-structure in $D(\mathcal{A})$ produced by HRS-tilting the standard t-structure in $D(\mathcal{A})$ with respect to $\mathbf{t}$, and let $\mathbb{T}_\mathbf{t}$ be the t-structure in $\mathsf{D}(R)$ obtained by HRS-tilting the t-structure $\mathbb{T}$ in $\mathsf{D}(R)$. Then $\mathbb{S}$ induces a derived equivalence if and only if so does $\mathbb{T}_\mathbf{t}$.
\end{lemma}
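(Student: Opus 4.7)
The plan is to transport the problem across the triangle equivalence $\real_\mathbb{T}\colon \D(\mathcal{A}) \to \D(R)$ provided by the hypothesis that $\mathbb{T}$ induces a derived equivalence. The key input, built into the construction of realisation functors (see \cite{BBD,V}), is that $\real_\mathbb{T}$ intertwines the standard $t$-structure on $\D(\mathcal{A})$ with $\mathbb{T}$: that is, $\real_\mathbb{T}(\D^{\leq 0}(\mathcal{A})) = \Ucal$ and $\real_\mathbb{T}(\D^{\geq 1}(\mathcal{A})) = \Vcal$, and the associated cohomology functors are intertwined under $\real_\mathbb{T}$.

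Next I would observe that since the HRS-tilt of a $t$-structure at a torsion pair in its heart is defined purely in terms of the cohomology functor of that $t$-structure and a torsion pair in $\mathcal{A}$, it follows that $\real_\mathbb{T}$ also intertwines $\mathbb{S}$ with $\mathbb{T}_\mathbf{t}$; in particular it restricts to an equivalence of abelian categories $\Phi\colon \Hcal_\mathbb{S} \to \Hcal_{\mathbb{T}_\mathbf{t}}$.

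To conclude I would apply the criterion of Theorem \ref{prop:bound2unbound}(1): $\mathbb{S}$ induces a bounded derived equivalence if and only if for all $X, Y \in \Hcal_\mathbb{S}$ and every $n > 1$ one has
\[\Ext^n_{\Hcal_\mathbb{S}}(X,Y) \simeq \Hom_{\D(\mathcal{A})}(X,Y[n]).\]
Via the abelian equivalence $\Phi$ the left-hand side is in bijection with $\Ext^n_{\Hcal_{\mathbb{T}_\mathbf{t}}}(\Phi(X),\Phi(Y))$, while via the triangle equivalence $\real_\mathbb{T}$ the right-hand side matches $\Hom_{\D(R)}(\Phi(X),\Phi(Y)[n])$. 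Hence the criterion for $\mathbb{S}$ translates term-by-term into the one for $\mathbb{T}_\mathbf{t}$, giving the biconditional in the bounded case. For the unbounded version one appeals to the last clause of Proposition \ref{prop:bound2unbound}(2), which upgrades a bounded derived equivalence to an unbounded one in the presence of a smashing $t$-structure with Grothendieck heart (a property which is inherited by the HRS-tilt precisely when $\mathbf{t}$ is of finite type, cf. Theorem \ref{Saorin}(1)).

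The main obstacle is making rigorous the assertion in the second paragraph, that $\real_\mathbb{T}$ carries $\mathbb{S}$ to $\mathbb{T}_\mathbf{t}$. Morally this is a functoriality statement for HRS-tilting under $t$-exact triangle equivalences, but a careful proof requires unpacking the construction of the realisation functor enough to identify its action on the cohomology functor of the standard $t$-structure; once this compatibility is established, the rest is a straightforward bookkeeping exercise translating the Ext/Hom criterion across $\Phi$ and $\real_\mathbb{T}$.
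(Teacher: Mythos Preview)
Your proposal is correct and follows essentially the same route as the paper: both transport $\mathbb{S}$ to $\mathbb{T}_\mathbf{t}$ through the realisation equivalence, identify the hearts, and then translate the $\Ext$/$\Hom$ criterion of Theorem~\ref{prop:bound2unbound}(1) term by term. The only cosmetic difference is that the paper works directly with the bounded realisation $\realb_\Tbb\colon \D^b(\Acal)\to \D^b(R)$ (which exists for any intermediate $t$-structure, without needing smashing or Grothendieck hypotheses), so the upgrade-to-unbounded step you add at the end is not needed for the lemma as stated.
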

\begin{proof}
Let $\rho_1\colon \mathsf{D}^b(\mathcal{A})\longrightarrow \mathsf{D}^b(R)$ be a realisation functor for the t-structure $\mathbb{T}$ (which is assumed to be a triangle equivalence). It can easily be checked, using elementary properties of realisation functors, that the t-structure $\mathbb{S}\cap \mathsf{D}^b(\mathcal A)$ in $\mathsf{D}^b(\mathcal A)$ is sent, via the equivalence $\rho_1$, to the t-structure $\mathbb{T}_\mathbf{t}\cap \mathsf{D}^b(R)$ in $\mathsf{D}^b(R)$. In particular, $\rho_1$ induces an equivalence between $\mathcal{H}_\mathbb{S}$  and $\mathcal{A}_\mathbf{t}$, the heart of $\mathbb{T}_\mathbf{t}$ in $\mathsf{D}(R)$. 

Suppose now that $\mathbb{S}$ induces a derived equivalence, i.e. the realisation functor $\rho_2\colon \mathsf{D}^b(\Hcal_\mathbb{S})\longrightarrow \mathsf{D}^b(\Acal)$ is a triangle equivalence.   Hence, it follows that for $X$ and $Y$ in $\Acal_\mathbf{t}$, with $X\cong\rho_1X'$ and $Y=\rho_1Y'$ for $X'$ and $Y'$ in $\mathcal{H}_\mathbb{S}$, we have $\mathsf{Ext}^n_{\Acal_\mathbf{t}}(X,Y)\cong \mathsf{Ext}^n_{\mathcal{H}_\mathbb{S}}(X',Y')$ and, since $\rho_2$ is a derived equivalence, the latter is isomorphic to $\mathsf{Hom}_{\mathsf{D}^b(\Acal)}(X',Y'[n])$. Hence, we have
$$\mathsf{Ext}^n_{\Acal_\mathbf{t}}(X,Y)\cong \mathsf{Hom}_{\mathsf{D}^b(\Acal)}(X',Y'[n])\cong \mathsf{Hom}_{\mathsf{D}^b(\Acal)}(\rho_1X',\rho_1Y'[n])\cong \mathsf{Hom}_{\mathsf{D}^b(R)}(X,Y[n]),$$
proving that $\mathbb{T}_\mathbf{t}$ induces a derived equivalence. The converse implication is completely analogous.
\end{proof}

As a consequence of the lemma above, we can use the criteria in Proposition \ref{prop:CHZ} and, consequently, in Theorem \ref{criterion Grothendieck} to test whether HRS-tilts of hearts of t-structures that induce derived equivalences still induce derived equivalences.

\begin{thm}\label{main der eq}
Let $\mathbb{T}=(\Ucal,\Vcal)$ be an intermediate compactly generated $t$-structure in $\D(R)$ with
heart $\Hcal_\Tbb$, and suppose that $\mathbb{T}$ induces a derived equivalence.
Let $\mathbf{t}=(\Tcal_V,\Fcal_V)$ be the hereditary torsion pair of finite type
in $\Hcal_\Tbb$ associated
to a specialisation closed subset $V$, and let $t\colon \Hcal_\Tbb\to \Tcal$ denote the corresponding
torsion radical. If there is a set of generators $\{G_\lambda: \lambda\in \Lambda\}$ of $\Hcal_\Tbb$ such
that $G_\lambda/t(G_\lambda)$ is finitely presented, then the HRS-tilted $t$-structure associated to
$\mathbf{t}$ induces a derived equivalence. 
\end{thm}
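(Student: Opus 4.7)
The plan is to reduce the theorem to Theorem \ref{criterion Grothendieck}, applied inside the Grothendieck category $\Hcal_\Tbb$ itself. Since $\Tbb$ is assumed to induce a derived equivalence, Lemma \ref{adaptation} guarantees that $\Tbb_\mathbf{t}$ induces a derived equivalence if and only if the HRS-tilt of the standard $t$-structure on $\D(\Hcal_\Tbb)$ at $\mathbf{t}$ does. Taking the coproduct generator $G := \coprod_{\lambda \in \Lambda} G_\lambda$ of $\Hcal_\Tbb$, and setting $A_\lambda := G_\lambda/t(G_\lambda)$, the criterion becomes the statement that $G/\tr_{G/t(G)}(G)$ lies in $\Tcal_V$. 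By hypothesis each $A_\lambda$ is finitely presented, and since the radical of a hereditary torsion pair commutes with coproducts, $G/t(G) = \coprod_\lambda A_\lambda$.

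First I would prove the structural decomposition $\tr_{G/t(G)}(G) = \coprod_\mu \tr_{G/t(G)}(G_\mu)$. One inclusion is immediate by composing morphisms $G/t(G) \to G_\mu$ with coproduct injections; the other uses finite presentability of each $A_\lambda$ to factor any restricted morphism $A_\lambda \to G$ through a finite subcoproduct of $G$, after which the image sits inside the sum of the images of its projections to the various $G_\mu$. Consequently $G/\tr_{G/t(G)}(G) \cong \coprod_\mu G_\mu/\tr_{G/t(G)}(G_\mu)$ and, since $\Tcal_V$ is closed under coproducts and quotients, and $\tr_{A_\mu}(G_\mu) \subseteq \tr_{G/t(G)}(G_\mu)$, it suffices to show that $G_\mu/\tr_{A_\mu}(G_\mu) \in \Tcal_V$ for each $\mu \in \Lambda$.

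The core step is a localization argument. Membership in $\Tcal_V = \supp^{-1}(V) \cap \Hcal_\Tbb$ can be tested prime by prime: by Lemma \ref{basics supp}(5), a bounded complex has $\supp \subseteq V$ if and only if $\Supp \subseteq V$, so one only needs to check that $(G_\mu/\tr_{A_\mu}(G_\mu))_\mathfrak{p} = 0$ for every $\mathfrak{p} \notin V$. As noted in the proof of Theorem \ref{classification of tp in heart}(1)(a), $- \otimes_R R_\mathfrak{p}$ is exact on $\Hcal_\Tbb$; moreover, since $t(G_\mu)$ lies in $\Tcal_V$ and thus has support inside $V$, the projection $p_\mu\colon G_\mu \twoheadrightarrow A_\mu$ becomes an isomorphism upon localization at $\mathfrak{p} \notin V$. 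Finite presentability of $A_\mu$ ensures that $\Hom_{\Hcal_\Tbb}(A_\mu, -)$ commutes with the filtered colimit defining the localization, so the canonical map $(p_\mu)_*\colon \Hom(A_\mu, G_\mu) \to \End(A_\mu)$ becomes an isomorphism of $R_\mathfrak{p}$-modules after localization at $\mathfrak{p}$. Lifting the identity of $A_\mu$ along this isomorphism produces a morphism $g\colon A_\mu \to G_\mu$ together with an element $u \in R \setminus \mathfrak{p}$ such that $p_\mu \circ g = u \cdot 1_{A_\mu}$ in $\End(A_\mu)$. Since $u$ becomes a unit after localizing at $\mathfrak{p}$, the localized morphism $g_\mathfrak{p}$ is itself an isomorphism, and hence $\tr_{A_\mu}(G_\mu)_\mathfrak{p} \supseteq \im(g_\mathfrak{p}) = (G_\mu)_\mathfrak{p}$. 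The bounded derived equivalence thus obtained extends to an unbounded one by Remark \ref{rmk:comm-noeth-unbounded}.

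The main obstacle is precisely this last step: one must exploit the $R$-linearity of $\Hcal_\Tbb$ together with the finite presentability of $A_\mu$ to witness, by an honest morphism from $A_\mu$ to $G_\mu$ (up to multiplication by a unit outside $\mathfrak{p}$), the fact that the torsion sequence $0 \to t(G_\mu) \to G_\mu \to A_\mu \to 0$ becomes split after localization at primes outside $V$. Once this is in place, the rest is a formal assembly of Theorem \ref{criterion Grothendieck}, Lemma \ref{adaptation}, and the support-theoretic description of $\Tcal_V$.
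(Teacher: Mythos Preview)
Your proof is correct and follows essentially the same approach as the paper: reduce to the trace criterion via Lemma~\ref{adaptation}, then verify $G_\mu/\tr_{A_\mu}(G_\mu)\in\Tcal_V$ by localising at primes $\mathfrak p\notin V$, using finite presentability of $A_\mu$ to commute $\Hom_{\Hcal_\Tbb}(A_\mu,-)$ with the direct limit computing $-\otimes_R R_{\mathfrak p}$ in $\Hcal_\Tbb$ and produce a morphism $A_\mu\to G_\mu$ that becomes an isomorphism after localisation. The only organisational difference is that the paper works directly with each $G_\lambda$ and assembles the CHZ-sequences via Proposition~\ref{CHZ generator}, thereby avoiding your preliminary decomposition $\tr_{G/t(G)}(G)=\coprod_\mu\tr_{G/t(G)}(G_\mu)$.
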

\begin{proof}
We use an adaptation of Theorem
\ref{criterion Grothendieck} (see also Lemma \ref{adaptation}). In fact, we will check that for each $\lambda$ in $\Lambda$, the
quotient $G_\lambda/\mathsf{tr}_{G_\lambda/t(G_\lambda)}(G_\lambda)$ is torsion; this provides us
with a sequence such as in Proposition \ref{CHZ generator} by considering the coproduct over the set
$\Lambda$ of the resulting sequences. This will show that, indeed, the HRS-tilted $t$-structure
associated to $\mathbf{t}$ induces a derived equivalence (see Remark \ref{rmk:comm-noeth-unbounded}).

First observe that since $V$ is specialisation closed and $\Hcal_\Tbb$ is contained in $D^b(R)$ ($\Tbb$ is intermediate), we have from Equation (\ref{small vs big}) that 
$\supp^{-1}(V)\cap\Hcal_\Tbb=\Supp^{-1}(V)\cap\Hcal_\Tbb$ and, therefore,
all we want to show is that $G_\lambda/\mathsf{tr}_{G_\lambda/t(G_\lambda)}(G_\lambda)\otimes_R
R_\mathfrak{p}=0$ for all primes $\mathfrak{p}$ that do not lie in $V$.
Let then $\mathfrak{p}$ be a
prime not in $V$. Since $R_\mathfrak{p}$ is flat, we can write it as a direct limit of free
$R$-modules, i.e. $R_\mathfrak{p}=\varinjlim_{j\in J} R^{(n_j)}$.
Since directed homotopy colimits in $\D(R)$ are computed as componentwise direct limits, one sees
that
\[G_\lambda\otimes R_\mathfrak{p}=G_\lambda\otimes \varinjlim\nolimits_J
R^{(n_j)}=\Hocolim\nolimits_J G_\lambda^{(n_j)}\]
(see the proof of \cite[Lemma 4.1]{H} for the details).
Now, by \cite[Corollary 5.8]{SSV}, this directed homotopy colimit of objects of $\Hcal_\Tbb$ is a
direct limit in $\Hcal_\Tbb$.
We can therefore use the hypothesis that $G_\lambda/t(G_\lambda)$ is finitely presented in $\Hcal_\Tbb$ to
write
\[\Hom_{\D(R)}(G_\lambda/t(G_\lambda),G_\lambda)\otimes_R
	R_\mathfrak{p}\cong \varinjlim\nolimits_J \Hom_{\D(R)}(G_\lambda/t(G_\lambda),G_\lambda)^{(n_j)} \cong
	\Hom_{\D(R)}(G_\lambda/t(G_\lambda),G_\lambda\otimes_R R_\mathfrak{p}).\]
Since $\D(R_\mathfrak{p})$
is a bireflective subcategory of $\D(R)$ with reflection functor
$-\otimes_RR_\mathfrak{p}$, it follows that
$$\Hom_{\D(R)}(G_\lambda/t(G_\lambda),G_\lambda)\otimes_R R_\mathfrak{p}\cong
\Hom_{\D(R)}(G_\lambda/t(G_\lambda)\otimes_RR_\mathfrak{p},G_\lambda\otimes_R R_\mathfrak{p}).$$ Now,
observe that since $\mathfrak{p}$ does not lie in $V$ and since $\Tcal_V$ is supported in $V$, we
have $t(G_\lambda)\otimes_RR_\mathfrak{p}=0$. Since $-\otimes_RR_\mathfrak{p}$ is an exact functor
in $\Hcal_{\Tbb}$, it then follows that $G_\lambda\otimes_RR_\mathfrak{p}\cong
G_\lambda/t(G_\lambda)\otimes_RR_\mathfrak{p}$. Therefore, the isomorphism of $\Hom$-spaces in the
last equation above allows us to conclude that there is a morphism $f\colon G_\lambda/t(G_\lambda)\to
G_\lambda$ such that $f\otimes_RR_\mathfrak{p}$ is an isomorphism. Since
$\mathsf{tr}_{G_\lambda/t(G_\lambda)}(G_\lambda)\otimes_R R_\mathfrak{p}$ contains
$\mathsf{Im}(f)\otimes_R R_\mathfrak{p}$, we have that
$\mathsf{tr}_{G_\lambda/t(G_\lambda)}(G_\lambda)\otimes_R R_\mathfrak{p}=G_\lambda\otimes_R
R_\mathfrak{p}$. We thus conclude that, indeed,
$G_\lambda/\mathsf{tr}_{G_\lambda/t(G_\lambda)}(G_\lambda)\otimes_R R_\mathfrak{p}=0$, as wanted.
\end{proof}
\begin{cor}\label{loc coh}
Let $\mathbb{T}=(\Ucal,\Vcal)$ be an intermediate compactly generated $t$-structure in $\D(R)$ with
a locally coherent heart $\Hcal_{\Tbb}$, and suppose that $\mathbb{T}$ induces a derived equivalence. Let
$\mathbf{t}=(\Tcal_V,\Fcal_V)$ be the hereditary torsion pair of finite type in $\Hcal_{\Tbb}$ associated
to a specialisation-closed $V$ and suppose that $(\Tcal_V,\Fcal_V)$ is restrictable.
Then the HRS-tilted $t$-structure associated to $\mathbf{t}$ induces a 
derived equivalence. 
\end{cor}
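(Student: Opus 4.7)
The plan is to derive this corollary as a direct application of Theorem \ref{main der eq}: all I need to do is produce a generating set $\{G_\lambda : \lambda\in\Lambda\}$ of $\Hcal_\Tbb$ such that each quotient $G_\lambda/t(G_\lambda)$ is finitely presented, where $t$ denotes the torsion radical of $\mathbf{t}=(\Tcal_V,\Fcal_V)$. Once such a set is in hand, Theorem \ref{main der eq} applied to this $\Tbb$ (which by assumption induces a derived equivalence) and this hereditary torsion pair of finite type delivers the conclusion.

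Because $\Hcal_\Tbb$ is locally coherent, in particular it is locally finitely presented, so the subcategory $\fp(\Hcal_\Tbb)$ is skeletally small and contains a generating set of $\Hcal_\Tbb$. I would take $\{G_\lambda\}$ to be precisely such a generating set, with each $G_\lambda\in\fp(\Hcal_\Tbb)$. For every $G_\lambda$, consider its torsion decomposition
\[
0\longrightarrow t(G_\lambda)\longrightarrow G_\lambda\longrightarrow G_\lambda/t(G_\lambda)\longrightarrow 0
\]
with respect to $\mathbf{t}$. The hypothesis that $\mathbf{t}$ is restrictable says exactly that $(\Tcal_V\cap\fp(\Hcal_\Tbb),\Fcal_V\cap\fp(\Hcal_\Tbb))$ is a torsion pair in the abelian category $\fp(\Hcal_\Tbb)$, so any object of $\fp(\Hcal_\Tbb)$ admits a (necessarily unique) torsion decomposition whose terms lie in $\fp(\Hcal_\Tbb)$. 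By uniqueness of the torsion decomposition in $\Hcal_\Tbb$, this forces $t(G_\lambda)$ and $G_\lambda/t(G_\lambda)$ to both be finitely presented, which is the required property.

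With this set $\{G_\lambda\}$ in hand, Theorem \ref{main der eq} immediately yields that the HRS-tilted $t$-structure associated to $\mathbf{t}$ induces a derived equivalence, completing the proof. There is no real obstacle here; the only point to be careful about is the interplay between the torsion decomposition in $\Hcal_\Tbb$ and in $\fp(\Hcal_\Tbb)$, which is settled by uniqueness once one knows that both decompositions exist and their terms lie in the respective torsion and torsionfree classes.
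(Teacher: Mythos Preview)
Your proposal is correct and follows essentially the same approach as the paper's own proof: choose a set of finitely presented generators (available since $\Hcal_\Tbb$ is locally coherent), use restrictability of $\mathbf{t}$ to conclude that each $G_\lambda/t(G_\lambda)$ is finitely presented, and then invoke Theorem \ref{main der eq}. The paper states this more tersely, but your added justification via uniqueness of torsion decompositions is exactly the reason behind their claim.
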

\begin{proof}
Under the assumption that $\mathbf{t}$ is restrictable, for any set
$\{G_\lambda\colon\lambda\in\Lambda\}$ of finitely presented generators of $\Hcal_{\Tbb}$,
$G_\lambda/t(G_\lambda)$ will also be finitely
presented. Hence, the result follows from Theorem \ref{main der eq}.
\end{proof}

\begin{rmk}
Since we know that for a commutative noetherian ring $R$, every hereditary torsion pair in
$\mathsf{Mod}(R)$ is restrictable, note that Corollary \ref{der eq comm ring} follows immediately from the
corollary above.
\end{rmk}
\begin{rmk}
If $\Tbb$ is as in Corollary \ref{loc coh}, $\mathbf{t}$ is any torsion pair in $\Hcal_{\Tbb}$  and we
happen to know that $\Hcal_\mathbf{t}$ is locally coherent, then the torsion pair $\mathbf{t}$ is
restrictable if and only if there is a set $\{G_\lambda\colon\lambda\in\Lambda\}$ of finitely presented
generators of $\Hcal_{\Tbb}$ such that $G_\lambda/t(G_\lambda)$ is finitely presented (see \cite[Remark
6.3(3)]{PSV}). Therefore, knowing this information about $\Hcal_\mathbf{t}$, the hypothesis of
Corollary \ref{loc coh} is minimal to apply Theorem \ref{main der eq}.
\end{rmk}

\subsection{Intermediate compactly generated $t$-structures via iterated HRS-tilts}

In this subsection, we show that any intermediate compactly generated $t$-structure of $\D(R)$ is
obtained from the standard $t$-structure by a sequence of HRS-tilts at hereditary torsion pairs of
finite type. The following proposition gives us some TTF classes in hearts of compactly generated t-structure (compare with Remark \ref{TTF in HRS}).

\begin{prop}\label{T0}
Let $\varphi$ be an intermediate sp-filtration, with $\varphi(0)\neq\varphi(1)=\emptyset$, and denote by
$\Hcal_\varphi$ the heart of the associated compactly generated $t$-structure
$\mathbb{T}_\varphi=(\Ucal_\varphi,\Vcal_\varphi)$. Then $\Tcal_0:=\Hcal_\varphi\cap
\mathsf{Mod}(R)$ is a TTF class in $\Hcal_\varphi$. In particular, we have that
$\Tcal_0=\supp^{-1}(\varphi(0))\cap\Hcal_\varphi=\mathsf{Supp}^{-1}(\varphi(0))\cap\Hcal_\varphi$.
\end{prop}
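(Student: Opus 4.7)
The strategy is to first identify $\Tcal_0$ with the support-controlled subcategory $\Supp^{-1}(\varphi(0))\cap\Hcal_\varphi$, which by Theorem \ref{classification of tp in heart}(1)(a) is already a hereditary torsion class in $\Hcal_\varphi$, and then to construct the complementary torsion pair giving the TTF structure by means of standard truncation.

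For the identification, note first that the hypothesis $\varphi(1)=\emptyset$ forces $\Hcal_\varphi\subseteq\Dbb^{\leq0}$, and a direct check shows that every $R$-module lies in $\Vcal_\varphi[1]$: indeed, the condition $\mathbb{R}\Gamma_{\varphi(n)}(M)\in\Dbb^{\geq n}$ is automatic for $n\leq 0$ (since $\mathbb{R}\Gamma_V$ sends a module to a non-negatively graded complex) and trivial for $n\geq 1$ (where $\varphi(n)=\emptyset$). Hence $\Tcal_0=\Ucal_\varphi\cap\Mod(R)=\Supp^{-1}(\varphi(0))\cap\Mod(R)$ by the aisle description. For the non-trivial direction of the support identification, take $X\in\Hcal_\varphi$ with $\Supp(X)\subseteq\varphi(0)$: intermediacy gives $X\in\D^b(R)$, so \eqref{small vs big} places $X$ in the localising subcategory $\supp^{-1}(\varphi(0))$ of $\D(R)$, and therefore $\mathbb{R}\Gamma_{\varphi(0)}(X)\cong X$. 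Applying the coaisle condition $X\in\Vcal_\varphi[1]$ at index $n=0$ then yields $X\cong\mathbb{R}\Gamma_{\varphi(0)}(X)\in\Dbb^{\geq0}$, and combined with $X\in\Dbb^{\leq0}$ this forces $X\in\Mod(R)$. A second application of \eqref{small vs big} gives the final equality with $\supp^{-1}(\varphi(0))\cap\Hcal_\varphi$.

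To upgrade $\Tcal_0$ to a TTF class, I would exhibit a second torsion pair $(\Ccal,\Tcal_0)$ in $\Hcal_\varphi$ by setting $\Ccal:=\Hcal_\varphi\cap\Dbb^{\leq-1}$ and, for each $Y\in\Hcal_\varphi$, analysing the standard truncation triangle
\[\tau^{\leq-1}Y\longrightarrow Y\longrightarrow H^0(Y)\longrightarrow\tau^{\leq-1}Y[1]\]
in $\D(R)$. The module $H^0(Y)$ already lies in $\Tcal_0$ (its support sits in $\varphi(0)$ since $Y\in\Ucal_\varphi$, and all modules lie in $\Vcal_\varphi[1]$), and the aisle condition $\tau^{\leq-1}Y\in\Ucal_\varphi$ is immediate from the matching of cohomologies in degrees $n\leq -1$. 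The main obstacle will be verifying the coaisle condition $\tau^{\leq-1}Y\in\Vcal_\varphi[1]$: my plan is to apply the triangulated functor $\mathbb{R}\Gamma_{\varphi(n)}$ to the truncation triangle and run a long-exact-sequence argument, using that both $\mathbb{R}\Gamma_{\varphi(n)}(Y)$ and $\mathbb{R}\Gamma_{\varphi(n)}(H^0(Y))$ lie in $\Dbb^{\geq n}$ to deduce the same for $\mathbb{R}\Gamma_{\varphi(n)}(\tau^{\leq-1}Y)$. Once this is established, the truncation becomes a short exact sequence in $\Hcal_\varphi$ with $\tau^{\leq-1}Y\in\Ccal$ and $H^0(Y)\in\Tcal_0$, and the orthogonality $\Hom_{\Hcal_\varphi}(\Ccal,\Tcal_0)=\Hom_{\D(R)}(\Dbb^{\leq-1},\Dbb^{\geq0})=0$ follows for free from the standard $t$-structure, completing the TTF triple.
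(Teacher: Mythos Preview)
Your proposal is correct and reaches the same conclusion, but the logical organisation differs from the paper's in two noteworthy ways.

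First, you establish the identification $\Tcal_0=\Supp^{-1}(\varphi(0))\cap\Hcal_\varphi$ \emph{first} and then invoke Theorem~\ref{classification of tp in heart}(1)(a) to obtain the hereditary torsion class property for free. The paper instead proves directly that $\Tcal_0$ is closed under quotients in $\Hcal_\varphi$ (via a short cohomological argument showing that $H^{-1}(Z)=0$ for any quotient $Z$ of an object of $\Tcal_0$) and only deduces the support identification at the very end from Proposition~\ref{some properties}. Your shortcut is legitimate and arguably cleaner, since Theorem~\ref{classification of tp in heart}(1)(a) has already done the work.

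Second, for the coaisle membership $\tau^{\leq -1}Y\in\Vcal_\varphi[1]$, you plan a computation with $\mathbb{R}\Gamma_{\varphi(n)}$ and long exact sequences. This works (both $\mathbb{R}\Gamma_{\varphi(n)}(Y)$ and $\mathbb{R}\Gamma_{\varphi(n)}(H^0(Y))$ do lie in $\Dbb^{\geq n}$, the latter because $H^0(Y)$ is a module and $\varphi(n)=\emptyset$ for $n\geq 1$), but it is heavier than necessary: the paper simply observes that $\Vcal_\varphi[1]$, being a shifted coaisle, is closed under cocones, and $\tau^{\leq -1}Y$ is the cocone of $Y\to H^0(Y)$ with both ends already in $\Vcal_\varphi[1]$. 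You could replace your planned $\mathbb{R}\Gamma$ argument with this one-line observation. Similarly, your verification that modules lie in $\Vcal_\varphi[1]$ via the $\mathbb{R}\Gamma$ description is correct but could be shortened to $\Mod(R)\subseteq\Dbb^{\geq 0}\subseteq\Vcal_\varphi[1]$ (the last inclusion because $\varphi(1)=\emptyset$ forces $\Dbb^{\geq 1}\subseteq\Vcal_\varphi$).
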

\begin{proof}
First note that
\[\Hcal_\varphi\cap\Dbb^{\geq 0}\overset{(1)}{=} \Tcal_0
\overset{(2)}{=} \Ucal_\varphi\cap\Mod(R) \overset{(3)}{=} \Supp^{-1}(\varphi(0))\cap\Mod(R).\]
Indeed, equality (1) follows from $\Hcal_\varphi\subseteq\Ucal_\varphi\subseteq\Dbb^{\leq0}$, (2)
follows from $\Mod(R)\subseteq\Dbb^{\geq 0}\subseteq\Vcal_\varphi[1]$, and (3) follows by
definition of $\Ucal_\varphi$. In particular, (3) shows that
$\supp(\Tcal_0)=\Supp(\Tcal_0)=\varphi(0)$ by Theorem \ref{review torsion}.

We now show that $(\Hcal_\varphi\cap\Dbb^{\leq -1},\Tcal_0)=(\Hcal_\varphi\cap\Dbb^{\leq -1},\Hcal_\varphi\cap\Dbb^{\geq
0})$ is a
torsion pair in $\Hcal_\varphi$.
First, $\Hom_{\Hcal_\varphi}(\Hcal_\varphi\cap\Dbb^{\leq-1},\Tcal_0)=0$ is clear. Now, let $X$ be
an object of $\Hcal_\varphi$ and consider the truncation triangle with respect to the standard
$t$-structure
\[\tau^{\leq-1}X\longrightarrow X\longrightarrow H^0(X)\longrightarrow (\tau^{\leq-1}X)[1].\]
By definition of
$\Ucal_\varphi$, it is closed under standard truncations, so all the vertices belong to
$\Ucal_\varphi$. Moreover, it is also clear that $H^0(X)$ lies in
$\Dbb^{\geq0}\subseteq\Vcal_\varphi[1]$, and therefore $H^0(X)$ lies in $\Hcal_\varphi$. Lastly,
since $\Vcal_\varphi[1]$ is closed under taking co-cones, also $\tau^{\leq-1}X$ belongs to
$\Vcal_\varphi[1]$, and hence $\tau^{\leq-1}X$ lies also in $\Hcal_\varphi$. The triangle above is then
a short exact sequence in $\Hcal_\varphi$ and it is the torsion decomposition of $X$ with respect to
the torsion pair $(\Hcal_\varphi\cap\Dbb^{\leq -1},\Tcal_0)$.

It remains to show that $\Tcal_0$ is also a torsion class in $\Hcal_\varphi$ or, equivalently, that
$\Tcal_0$ is closed under quotients in $\Hcal_\varphi$. Consider a short exact sequence in
$\Hcal_\varphi$
\[0\longrightarrow X\longrightarrow Y\longrightarrow Z\longrightarrow 0\]
where $Y$ lies in $\Tcal_0$. Since $\Tcal_0$ is a
torsionfree class in $\Hcal_\varphi$, it follows that $X$ is also in $\Tcal_0$. Since
$\mathsf{supp}(Z)\subseteq \mathsf{supp}(X[1])\cup\mathsf{supp}(Y)$, it follows that
$\mathsf{supp}(Z)\subseteq \varphi(0)$. It remains to show that $Z$ lies in $\mathsf{Mod}(R)$.
Applying the standard cohomology functor to the triangle induced by the short exact sequence above,
we observe that, since $\Tcal_0$ is a hereditary torsion class in $\mathsf{Mod}(R)$, $H^{-1}(Z)$
lies in $\Tcal_0\subseteq\Hcal_\varphi$. Moreover, the above paragraph has also shown that
$\tau^{\leq-1}Z\cong H^{-1}(Z)[1]$ lies in $\Hcal_\varphi$. But this means that $H^{-1}(Z)=0$ and $Z$
must then lie in $\Tcal_0$.

Finally, the last statement follows from the fact that $\varphi(0)$ is specialisation closed,
$\Hcal_\varphi$ is contained in $\D^b(R)$ and hereditary torsion classes in $\Hcal_\varphi$ are determined by their support (see Proposition
\ref{some properties}). 
\end{proof}

\begin{lemma}\label{tilt between cg}
Let $\varphi$ and $\psi$ be intermediate sp-filtrations such that 
$\varphi(1)=\psi(1)=\emptyset$ and such that $\psi(i)=\varphi(i+1)$ for every
$i<0$. Then the compactly generated $t$-structure $\mathbb{T}_\psi$ associated to $\psi$ is obtained by HRS-tilting $\mathbb{T}_\varphi=(\Ucal_\varphi,\Vcal_\varphi)$ (with heart $\Hcal_\varphi$) with respect to a hereditary torsion pair of finite type whose torsion class is $\Supp^{-1}(\psi(0))\cap\Hcal_\varphi$.
\end{lemma}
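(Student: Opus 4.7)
The plan is to establish the equality of aisles $\widetilde{\Ucal}:=\Ucal_\varphi[1]\ast\Tcal=\Ucal_\psi$, where $\Tcal:=\Supp^{-1}(\psi(0))\cap\Hcal_\varphi$ is the proposed torsion class. Once the aisles coincide, the two $t$-structures agree, and the HRS-tilt of $\Tbb_\varphi$ at this torsion pair is precisely $\Tbb_\psi$. Since $\psi$ is an sp-filtration, $\Tbb_\psi$ is compactly generated by Theorem \ref{t-st comm}, so Theorem \ref{Saorin}(1) will then force the underlying torsion pair to be of finite type.

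The preliminary observation is that the hypotheses imply $\psi(0)\subseteq\psi(-1)=\varphi(0)$. Hence by Theorem \ref{classification of tp in heart}(1)(a) (invoking the identification of $\supp$ and $\Supp$ from equation~(\ref{small vs big}), valid since $\Hcal_\varphi\subseteq\D^b(R)$ and $\psi(0)$ is specialisation-closed), the class $\Tcal$ is indeed a hereditary torsion class in $\Hcal_\varphi$. The pivotal structural point is that, since $\psi(0)\subseteq\varphi(0)$, Proposition \ref{T0} gives $\Tcal\subseteq\Tcal_0=\Hcal_\varphi\cap\Mod(R)$: every object of $\Tcal$ is an honest module concentrated in cohomological degree zero whose support lies in $\psi(0)$.

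For $\widetilde{\Ucal}\subseteq\Ucal_\psi$, consider a triangle $Y\to X\to T\to Y[1]$ with $Y\in\Ucal_\varphi[1]$ and $T\in\Tcal$. Thus $\Supp(H^n(Y))\subseteq\varphi(n+1)$ for all $n$, while $H^n(T)=0$ for $n\neq 0$ and $\Supp(H^0(T))\subseteq\psi(0)$. A routine check with the long exact sequence of standard cohomologies, using that $\varphi$ is decreasing and that $\psi(0)\subseteq\varphi(m)$ for every $m\leq 0$, then yields $\Supp(H^n(X))\subseteq\psi(n)$ in every degree $n$.

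For the converse inclusion $\Ucal_\psi\subseteq\widetilde{\Ucal}$, let $X\in\Ucal_\psi$. Since $\psi(n)=\emptyset$ for $n\geq 1$ we have $X\in\Dbb^{\leq 0}$, so the standard truncation triangle $\tau^{\leq -1}X\to X\to H^0(X)\to(\tau^{\leq -1}X)[1]$ is available. The constraint $\Supp(H^n(X))\subseteq\varphi(n+1)$ for $n<0$ (combined with the vanishing of $H^n(X)$ for $n\geq 1$) shows $\tau^{\leq -1}X\in\Ucal_\varphi[1]$, while $H^0(X)$ is a module with support in $\psi(0)$, hence lies in $\Tcal$ by the preliminary observation. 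The main obstacle I anticipate is conceptual: one might naively try the $\Tbb_\varphi$-truncation, whose torsion summand could spread across several cohomological degrees and make the comparison with $\Ucal_\psi$ delicate; the payoff of first identifying $\Tcal$ with a class of ordinary modules is precisely that the plain standard truncation suffices.
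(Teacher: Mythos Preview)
Your proof is correct and follows essentially the same approach as the paper: identify the HRS-aisle $\Ucal_\varphi[1]\ast\Tcal$ with $\Ucal_\psi$ by exploiting Proposition~\ref{T0} to place $\Tcal$ inside $\Mod(R)$, then read off both inclusions via the standard truncation, and finally deduce finite type from compact generation through Theorem~\ref{Saorin}(1). The paper packages the two inclusions into a single sentence (noting that $\Ucal_\varphi[1]\subseteq\Dbb^{\leq-1}$ and $\Tcal\subseteq\Dbb^{\geq0}$ forces the star product to be detected by standard truncations), but the content is identical.
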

\begin{proof}
Let $\Tcal_i$ denote the hereditary torsion class in $\Mod(R)$ supported on $\varphi(i)$, for any integer $i$.
Since $\psi(0)\subseteq\psi(-1)=\varphi(0)$, we have that
$\Tcal':=\Hcal_\varphi\cap\Supp^{-1}(\psi(0))\subseteq\Hcal_\varphi\cap\Supp^{-1}(\varphi(0))=\Tcal_0$,
the last equality following from Proposition \ref{T0}. We know from Theorem \ref{classification of tp in heart}
that $\Tcal'$ is a hereditary torsion class in $\Hcal_\varphi$.  If we tilt
$\Hcal_\varphi$ with respect to $\Tcal'$ we obtain a $t$-structure having aisle
\[\overline{\Ucal}:= \Ucal_\varphi[1]\ast\Tcal'.\]
Now, since we have that $\Ucal_\varphi[1]\subseteq\Dbb^{\leq-1}$ and that 
$\Tcal'\subseteq\Tcal_0\subseteq\Dbb^{\geq0}$, this aisle $\overline{\Ucal}$ consists of the objects $X$ such that the standard truncation
$\tau^{\leq-1}X$ lies in $\Ucal_\varphi[1]$ and the standard truncation $\tau^{\geq0}X$ lies in $\Tcal'$, i.e.
\[\overline{\Ucal}=\{X\in \D(R)\colon \Supp(H^iX) \subseteq\varphi(i+1)=\psi(i), \ \forall i<0,\ \Supp(H^0X)\subseteq\psi(0)\}\]
In other words, we have that $\overline{\Ucal}$ is the aisle of the $t$-structure determined by
$\psi$. Moreover, since this is also a compactly generated $t$-structure, it follows that the
hereditary torsion pair we have tilted at is of finite type (see item (1) of Theorem \ref{Saorin}).
\end{proof}

\begin{ntn}
Note that in the above lemma, the sp-filtration $\varphi$ can be recovered from $\psi$. We will
denote this operation on sp-filtrations by writing $\varphi=\psi^{\langle
1\rangle}$. In the notation of \cite[\S 5.3]{AJS}, we have that $\psi^{\langle
1\rangle}$ is a shift of $\psi'$, i.e. $\psi^{\langle 1\rangle}(i)=\psi'(i-1)$.
Moreover, starting with an sp-filtration $\psi$ such that $\psi(1)=\emptyset$,
we will denote the iterations of this process by $\psi^{\langle n\rangle}$, for
$n\geq 1$:
\[\psi^{\langle n\rangle}(i) = \begin{cases}
		\emptyset &\text{if }i>0\\
		\psi(i-n) &\text{if }i\leq 0.
	\end{cases}\]
\end{ntn}

\begin{prop}\label{iterated HRS}
Let $\varphi$ be an intermediate sp-filtration such that $\varphi(1)=\emptyset$.
Then the compactly generated $t$-structure $\mathbb{T}_\varphi$ associated to $\varphi$ can be built
from the standard $t$-structure by an iteration of HRS-tilts at hereditary torsion pairs of finite
type having specialisation-closed support.
\end{prop}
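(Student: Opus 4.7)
The plan is to reduce to iterated applications of Lemma \ref{tilt between cg}: I will construct a finite chain of intermediate sp-filtrations, starting with the one for the standard $t$-structure and ending at $\varphi$, so that consecutive filtrations satisfy the shift relation in that lemma. Each step will then automatically correspond to an HRS-tilt at a hereditary torsion pair of finite type whose torsion class has specialisation-closed support, as supplied by that lemma.

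Since $\varphi$ is intermediate with $\varphi(1)=\emptyset$, I choose a nonnegative integer $N$ such that $\varphi(-N)=\Spec(R)$. For each $k\in\{0,1,\ldots,N\}$ define
\[\varphi_k(i):=\begin{cases} \emptyset & \text{if } i>0, \\ \varphi(i-N+k) & \text{if } i\leq 0. \end{cases}\]
Each $\varphi_k$ is an intermediate sp-filtration with $\varphi_k(1)=\emptyset$: the values are specialisation-closed because those of $\varphi$ are, the sequence is decreasing because $\varphi$ is, and $\varphi_k(i)=\Spec(R)$ once $i-N+k\leq -N$. By construction $\varphi_0$ is the sp-filtration of the standard $t$-structure, since $\varphi_0(i)=\varphi(i-N)=\Spec(R)$ for all $i\leq 0$, while $\varphi_N=\varphi$. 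A direct substitution gives $\varphi_{k+1}(i)=\varphi(i-N+k+1)=\varphi_k(i+1)$ for every $i<0$, which is exactly the compatibility required by Lemma \ref{tilt between cg}; moreover $\varphi_{k+1}(0)=\varphi(k+1-N)\subseteq \varphi(k-N)=\varphi_k(0)$.

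Applying Lemma \ref{tilt between cg} to each consecutive pair $(\varphi_k,\varphi_{k+1})$, for $k=0,\ldots,N-1$, shows that $\mathbb{T}_{\varphi_{k+1}}$ is the HRS-tilt of $\mathbb{T}_{\varphi_k}$ at a hereditary torsion pair of finite type whose torsion class has support $\varphi_{k+1}(0)$, which is specialisation-closed by construction. Stringing these $N$ HRS-tilts together produces $\mathbb{T}_\varphi$ from the standard $t$-structure. There is no real obstacle beyond fixing the correct shift of indices: the substantive content is already encoded in Lemma \ref{tilt between cg}, and this proposition is just the observation that any intermediate sp-filtration with $\varphi(1)=\emptyset$ is reached from the standard one by iterating the one-step shift operation $\psi\mapsto\psi^{\langle 1\rangle}$ recorded in the notation preceding the statement.
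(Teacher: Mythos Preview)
Your proof is correct and follows the same approach as the paper: both reduce to iterated applications of Lemma \ref{tilt between cg}. The paper's proof is a two-line induction on the integer $n$ with $\varphi(-n)=\Spec(R)$, whereas you unroll that induction by writing the chain $\varphi_0,\varphi_1,\ldots,\varphi_N$ explicitly (in the paper's notation, $\varphi_k=\varphi^{\langle N-k\rangle}$), but the content is identical.
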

\begin{proof}
Since $\varphi$ is intermediate, we have $\Spec(R)=\varphi(-n)\supsetneq \varphi(-n+1)$ for some $n\geq0$.
The statement then follows by induction on $n$, using Lemma \ref{tilt between cg}.
\end{proof}

\subsection{Restrictable $t$-structures and derived equivalences}

We now turn to restrictable, intermediate and compactly generated $t$-structures, with the aim of
establishing that they induce derived equivalences. We begin by reviewing what is known about how to
characterise the sp-filtrations associated to the restrictable compactly generated $t$-structures
(see \cite{AJS}). The following condition turns out to play a significant role in that
characterisation for some commutative rings.

\begin{dfn}
An sp-filtration $\varphi$ is said to satisfy the \textbf{weak Cousin condition} if whenever
$\mathfrak{p}$ and $\mathfrak{q}$ are prime ideals such that $\mathfrak{p}\subsetneq \mathfrak{q}$
and $\mathfrak{p}$ is maximal under $\mathfrak{q}$ (i.e. there is no prime ideal $\mathfrak{t}$ such
that $\mathfrak{p}\subsetneq \mathfrak{t}\subsetneq \mathfrak{q}$), then we have 
\[\forall j\in\mathbb{Z},\ \mathfrak{q}\in\varphi(j)\Rightarrow \mathfrak{p}\in\varphi(j-1)\]
\end{dfn}

\begin{thm}\cite[Theorem 3.10 and 6.9, Corollary 4.5]{AJS}\cite[Theorem 6.3]{S}\label{thm restrictable}
Let $R$ be a commutative noetherian ring, $\mathscr{B}$ the set of $t$-structures in
$\D^b(\mathsf{mod}(R))$ and $\mathscr{T}$ the set of compactly generated $t$-structures in
$\D(R)$. There is an assignment $\Theta\colon \mathscr{B}\longrightarrow
\mathscr{T}$, sending a $t$-structure $\mathbb{B}:=(\Xcal,\Ycal)$ in $\D^b(\mathsf{mod}(R))$ to the
$t$-structure generated by $\Xcal$, namely
$\Theta(\mathbb{B}):=({}^\perp(\Xcal^\perp),\Xcal^\perp)$. Moreover, for every $\mathbb{B}$ in
$\mathscr{B}$, we have
\begin{enumerate}
\item $\Theta(\mathbb{B})\cap \D^b(\mathsf{mod}(R))=\mathbb{B}$ (and, in particular, $\Theta$ is injective);
\item The sp-filtration associated to $\Theta(\mathbb{B})$ satisfies the weak Cousin condition;
\item The heart of $\Theta(\mathbb{B})$ is locally coherent and its subcategory of finitely presented objects coincides with the heart of $\mathbb{B}$.
\end{enumerate}
The image of $\Theta$ is, then, the set of restrictable compactly generated $t$-structures.
Moreover, if $R$ admits a dualising complex, then the $t$-structures in the image of $\Theta$ are
those whose associated sp-filtrations satisfy the weak Cousin condition.
\end{thm}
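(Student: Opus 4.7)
The plan is to proceed in four stages: constructing $\Theta$ as a compactly generated $t$-structure, proving the restriction identity (1), deducing (2) and (3), and characterizing the image.

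First, given $\mathbb{B} = (\Xcal, \Ycal)$, the pair $\Theta(\mathbb{B}) = ({}^\perp(\Xcal^\perp), \Xcal^\perp)$ is a $t$-structure on $\D(R)$ because $\Xcal$ is essentially small (since $\mod(R)$ is), closed under extensions and positive shifts. To obtain compact generation I would associate the sp-filtration
\[ \varphi_\mathbb{B}(n) := \bigcup\nolimits_{X \in \Xcal} \Supp(H^{-n}(X)) \]
and compare with Theorem \ref{t-st comm}, identifying $\Theta(\mathbb{B})$ with the compactly generated $t$-structure determined by $\varphi_\mathbb{B}$. Since $\mathbb{B}$ is bounded as a $t$-structure on $\D^b(\mod(R))$, $\varphi_\mathbb{B}$ is intermediate, so $\Theta(\mathbb{B})$ is an intermediate compactly generated $t$-structure.

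For (1), the inclusion $\mathbb{B} \subseteq \Theta(\mathbb{B}) \cap \D^b(\mod(R))$ is immediate from the definition. For the converse, if $M \in {}^\perp(\Xcal^\perp) \cap \D^b(\mod(R))$, the bounded $t$-structure $\mathbb{B}$ produces an approximation triangle $X \to M \to Y \to X[1]$ with $X \in \Xcal$ and $Y \in \Ycal$ inside $\D^b(\mod(R))$. Full faithfulness of $\D^b(\mod(R)) \hookrightarrow \D(R)$ gives $\Hom_{\D(R)}(\Xcal, Y) = \Hom_{\D^b(\mod(R))}(\Xcal, Y) = 0$, so $Y \in \Xcal^\perp$; hence $M \to Y$ vanishes, the triangle splits, and $M$ sits inside $\Xcal$. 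Granted (1), the properties in (3) follow directly from Theorem \ref{MZ} (applicable because $\Theta(\mathbb{B})$ is intermediate, compactly generated, and has Grothendieck heart by Theorem \ref{thm:$t$-structure-types}): $\Hcal_{\Theta(\mathbb{B})}$ is locally coherent and $\fp(\Hcal_{\Theta(\mathbb{B})}) = \Hcal_{\Theta(\mathbb{B})} \cap \D^b(\mod(R)) = \Hcal_\mathbb{B}$.

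The weak Cousin condition (2) is the main technical obstacle. Given primes $\mathfrak{p} \subsetneq \mathfrak{q}$ with $\mathfrak{p}$ maximal under $\mathfrak{q}$ and $\mathfrak{q} \in \varphi_\mathbb{B}(j)$, I would pick $X \in \Xcal$ witnessing $\mathfrak{q} \in \Supp(H^{-j}(X))$ and exploit the non-vanishing of $\Ext^1_R(R/\mathfrak{q}, R/\mathfrak{p})$ under the maximality assumption to construct, via a cofibre argument on the minimal injective resolution of $X$, an object of $\Xcal$ with $\mathfrak{p}$ in its support at degree $-j+1$. The detailed local-cohomological construction is essentially \cite[Theorem 6.3]{S}, which I would invoke rather than reprove.

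Finally, for the image: one direction combines (1) with Theorem \ref{MZ}, ensuring every $\Theta(\mathbb{B})$ is restrictable. Conversely, if $\Tbb$ is restrictable and compactly generated with aisle $\Ucal_\Tbb$, its compact generators are perfect complexes and hence lie in $\Ucal_\Tbb \cap \D^b(\mod(R)) = \Xcal$; therefore $\Xcal^\perp = \Ucal_\Tbb^\perp$ and $\Theta(\Tbb \cap \D^b(\mod(R))) = \Tbb$. For the refinement under a dualising complex, I would cite \cite[Theorem 6.9]{AJS}: the deep ingredient is that a dualising complex allows one to \emph{construct} a bounded $t$-structure on $\D^b(\mod(R))$ realising any prescribed weak Cousin sp-filtration (via Cousin-type functors and local duality), so that the weak Cousin condition becomes sufficient. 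This last construction is the most substantial piece and I would not attempt to reprove it.
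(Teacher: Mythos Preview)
The paper does not supply a proof of this theorem at all: it is a compilation of cited results from \cite{AJS} and \cite{S}, stated without argument. So there is no ``paper's own proof'' to compare against; your proposal is effectively a reconstruction of the literature.

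Your outline is broadly faithful to those references, but there are two points worth flagging.

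First, you assert that ``$\mathbb{B}$ is bounded as a $t$-structure on $\D^b(\mathsf{mod}(R))$'' and conclude that $\varphi_{\mathbb{B}}$ is intermediate. The theorem does not assume $\mathbb{B}$ is bounded, and in fact degenerate $t$-structures such as $(\D^b(\mathsf{mod}(R)),0)$ produce non-intermediate sp-filtrations. This matters because you then invoke Theorem~\ref{MZ} for item (3), and that theorem is stated only for intermediate $t$-structures. The original reference for (3) is \cite[Theorem 6.3]{S}, which handles the general restrictable case directly; you should cite that rather than route through Theorem~\ref{MZ}. (For the paper's later applications everything is intermediate, so the gap is harmless in context, but it is a gap in your proof of the theorem as stated.)

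Second, two small corrections: with the conventions of Theorem~\ref{t-st comm} the filtration should read $\varphi_{\mathbb{B}}(n)=\bigcup_{X\in\Xcal}\Supp(H^{n}(X))$, not $H^{-n}$; and the weak Cousin condition (2) is established in \cite{AJS} (specifically around their Corollary~4.5 and Theorem~6.9), not in \cite{S}, so your attribution there is swapped. The identification of $\Theta(\mathbb{B})$ with the compactly generated $t$-structure of the associated sp-filtration---which you ``compare with Theorem~\ref{t-st comm}''---is precisely the content of \cite[Theorem 3.10]{AJS} and is not as immediate as your phrasing suggests; it relies on the fact that objects of $\D^b(\mathsf{mod}(R))$ generate the same aisle as suitable Koszul-type perfect complexes.
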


\begin{dfn}
Let $R$ be a commutative noetherian ring. We say that an sp-filtration $\varphi$ in $\Spec(R)$ is
\textbf{restrictable} if the associated compactly generated $t$-structure is
restrictable (in other words, the associated $t$-structure is in the image of the assignment
$\Theta$).
\end{dfn}

Note that it follows easily from the definition that if $\varphi$ is an sp-filtration with
$\varphi(1)=\emptyset$ satisfying the weak Cousin condition, then $\varphi^{\langle n\rangle}$ also
satisfies the weak Cousin condition, for every $n\geq 1$. In fact, the following related statement holds.

\begin{prop}\cite[Lemma 5.7]{AJS}\label{restrictable iteration}
If an intermediate sp-filtration $\varphi$ is restrictable, then $\varphi^{\langle n\rangle}$ is
also (intermediate and) restrictable, for any $n\geq1$.
\end{prop}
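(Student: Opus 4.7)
The proof proceeds by induction on $n \geq 1$, so it suffices to treat the base case $n=1$: assuming $\varphi$ is an intermediate restrictable sp-filtration, we show that $\varphi^{\langle 1 \rangle}$ is again intermediate and restrictable. The intermediate property for $\varphi^{\langle 1 \rangle}$ is immediate from its definition, since $\varphi^{\langle 1 \rangle}(i) = \varphi(i-1) = \Spec(R)$ for $i$ sufficiently negative (as $\varphi$ is intermediate), while $\varphi^{\langle 1 \rangle}(i) = \emptyset$ for $i > 0$ by convention.

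For the restrictability, the plan is to pass between $\mathbb{T}_\varphi$ and $\mathbb{T}_{\varphi^{\langle 1 \rangle}}$ via HRS-tilting and then invoke Theorem \ref{Saorin}(3). By Lemma \ref{tilt between cg}, $\mathbb{T}_\varphi$ is the HRS-tilt of $\mathbb{T}_{\varphi^{\langle 1 \rangle}}$ at the hereditary torsion pair of finite type $\mathbf{t}=(\Tcal,\Fcal)$ in $\Hcal_{\varphi^{\langle 1 \rangle}}$, with $\Tcal = \Supp^{-1}(\varphi(0)) \cap \Hcal_{\varphi^{\langle 1 \rangle}}$. The heart $\Hcal_\varphi$ thus comes equipped with the canonical torsion pair $\mathbf{t}' := (\Fcal[1], \Tcal)$; a direct calculation with the aisles (using that $\Ucal_{\varphi^{\langle 1 \rangle}} = \Ucal_{\varphi^{\langle 1 \rangle}}[1] \ast \Hcal_{\varphi^{\langle 1 \rangle}}$) shows that HRS-tilting $\mathbb{T}_\varphi$ at $\mathbf{t}'$ yields the aisle $\Ucal_{\varphi^{\langle 1 \rangle}}[1]$, i.e.\ it coincides with $\mathbb{T}_{\varphi^{\langle 1 \rangle}}$ up to a shift by $[1]$. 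Since restrictability is invariant under shifts of the $t$-structure, it suffices to prove that $(\mathbb{T}_\varphi)_{\mathbf{t}'}$ is restrictable. By Theorem \ref{Saorin}(3), applied to the intermediate restrictable $t$-structure $\mathbb{T}_\varphi$ (whose heart is locally coherent with $\fp(\Hcal_\varphi) = \Hcal_\varphi\cap\D^b(\mod(R))$ by Theorem \ref{MZ}), this is equivalent to $\mathbf{t}'$ being of finite type and restrictable in $\Hcal_\varphi$.

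Finite type of $\mathbf{t}'$ is automatic: $\mathbb{T}_{\varphi^{\langle 1 \rangle}}$ is compactly generated by Theorem \ref{t-st comm}, so it satisfies the equivalent conditions of Theorem \ref{thm:$t$-structure-types}, a property preserved by shifts; thus Theorem \ref{Saorin}(1) yields the claim. The main obstacle is the restrictability of $\mathbf{t}'$, namely that $(\Fcal[1]\cap\fp(\Hcal_\varphi),\Tcal\cap\fp(\Hcal_\varphi))$ is a torsion pair in $\fp(\Hcal_\varphi)=\Hcal_\varphi\cap\D^b(\mod(R))$. For this, Proposition \ref{T0} applied to $\varphi^{\langle 1 \rangle}$ (noting $\varphi(0) \subseteq \varphi(-1) = \varphi^{\langle 1 \rangle}(0)$) identifies $\Tcal$ with a subcategory of $\Mod(R)$ sitting in degree $0$, specifically the hereditary torsion class of $\Mod(R)$ supported in $\varphi(0)$; hence $\Tcal\cap\fp(\Hcal_\varphi)$ consists of finitely generated $R$-modules supported in $\varphi(0)$, which forms a torsion class in $\mod(R)$ by Gabriel's classification (Theorem \ref{review torsion}). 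Using the cohomological description of the aisles (Theorem \ref{t-st comm}) and tracking the standard truncations, one verifies that the $\mathbf{t}'$-torsion decomposition of any object of $\fp(\Hcal_\varphi)$ keeps both components in $\D^b(\mod(R))$, yielding the required restrictability.
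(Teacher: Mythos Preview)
The paper does not prove this statement itself: it is imported directly from \cite[Lemma~5.7]{AJS} with no argument given. Your proposal therefore supplies an internal proof using the paper's own machinery (Lemma~\ref{tilt between cg}, Proposition~\ref{T0}, Theorem~\ref{Saorin}(3)), which is a genuinely different route from simply citing the external source, and a reasonable one.

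Your strategy is correct. The reduction to showing that $\mathbf{t}'=(\Fcal[1],\Tcal)$ is a restrictable torsion pair of finite type in $\Hcal_\varphi$, via Theorem~\ref{Saorin}(3) and the standard ``tilt-back'' observation that $(\mathbb{T}_\varphi)_{\mathbf{t}'}=\mathbb{T}_{\varphi^{\langle 1\rangle}}[1]$, is sound. The one place where the write-up is thin is the final sentence on restrictability of $\mathbf{t}'$: rather than appealing vaguely to ``tracking the standard truncations'', the clean way to close is to apply Proposition~\ref{T0} to $\varphi$ itself (not only to $\varphi^{\langle 1\rangle}$). That proposition identifies $\Tcal$ with $\Hcal_\varphi\cap\Mod(R)=\Hcal_\varphi\cap\Dbb^{\geq 0}$ and shows that the $\mathbf{t}'$-torsion decomposition of any $X\in\Hcal_\varphi$ is literally the standard truncation triangle
\[
\tau^{\leq -1}X\longrightarrow X\longrightarrow H^0(X)\longrightarrow(\tau^{\leq -1}X)[1].
\]
Since standard truncations preserve $\D^b(\mod(R))$ over a noetherian ring, restrictability of $\mathbf{t}'$ is then immediate. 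Making this explicit would replace the hand-wave at the end with a one-line argument.
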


\begin{cor}\label{seq restrictable torsion pairs}
Let $\varphi$ be an intermediate sp-filtration with $\varphi(1)=\emptyset$. If $\varphi$ is
restrictable, then the torsion pairs involved in the HRS-tilts of Proposition \ref{iterated HRS} are
restrictable.
\end{cor}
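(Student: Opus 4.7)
The plan is to combine Proposition \ref{restrictable iteration} with the implication (b)$\Rightarrow$(a) of Theorem \ref{Saorin}(3). First, I would make explicit the chain of sp-filtrations yielding the iterated HRS-tilts constructed in the proof of Proposition \ref{iterated HRS}. Let $n\geq 0$ be such that $\Spec(R)=\varphi(-n)\supsetneq\varphi(-n+1)$. Unwinding the induction on $n$, the chain has the form
\[\varphi^{\langle n\rangle},\ \varphi^{\langle n-1\rangle},\ \ldots,\ \varphi^{\langle 1\rangle},\ \varphi^{\langle 0\rangle}=\varphi,\]
where $\varphi^{\langle n\rangle}$ is the sp-filtration of the standard $t$-structure, and for each $0\leq k\leq n-1$ the pair $(\varphi^{\langle k+1\rangle},\varphi^{\langle k\rangle})$ satisfies the hypotheses of Lemma \ref{tilt between cg} (with $\varphi^{\langle k+1\rangle}$ playing the role of the ``old'' sp-filtration $\varphi$ of the lemma). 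Consequently, $\mathbb{T}_{\varphi^{\langle k\rangle}}$ is the HRS-tilt of $\mathbb{T}_{\varphi^{\langle k+1\rangle}}$ at a hereditary torsion pair of finite type $\mathbf{t}_k$ in the heart $\Hcal_{\varphi^{\langle k+1\rangle}}$; these are precisely the torsion pairs whose restrictability we want to establish.

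Next, I would apply Proposition \ref{restrictable iteration}: since $\varphi$ is intermediate and restrictable by assumption, the iterate $\varphi^{\langle k\rangle}$ is intermediate and restrictable for every $k\geq 0$. Hence, for every $k$, both the source $\mathbb{T}_{\varphi^{\langle k+1\rangle}}$ and the target $\mathbb{T}_{\varphi^{\langle k\rangle}}$ of the HRS-tilt at $\mathbf{t}_k$ are intermediate, compactly generated and restrictable $t$-structures; in particular, their hearts are Grothendieck categories by Theorem \ref{t-st comm}.

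Finally, I would apply Theorem \ref{Saorin}(3) to the HRS-tilt of $\mathbb{T}_{\varphi^{\langle k+1\rangle}}$ at $\mathbf{t}_k$: since the tilted $t$-structure $\mathbb{T}_{\varphi^{\langle k\rangle}}$ is restrictable and its heart $\Hcal_{\varphi^{\langle k\rangle}}$ is Grothendieck, condition (b) holds, and the equivalence (a)$\Leftrightarrow$(b) immediately yields that $\mathbf{t}_k$ is of finite type (which we already knew) and, crucially, restrictable. The argument presents no genuine obstacle: it amounts to a bookkeeping exercise, whose only slightly delicate point is recognising that the iterated HRS-tilts produced by Proposition \ref{iterated HRS} are governed by the shift operation $\varphi\mapsto\varphi^{\langle k\rangle}$, so that Proposition \ref{restrictable iteration} can be invoked uniformly to guarantee restrictability of every intermediate $t$-structure in the chain.
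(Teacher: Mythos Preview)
Your proposal is correct and follows essentially the same route as the paper: both arguments invoke Proposition \ref{restrictable iteration} to ensure that every intermediate $t$-structure $\mathbb{T}_{\varphi^{\langle k\rangle}}$ in the chain is restrictable, and then appeal to Theorem \ref{Saorin}(3) to deduce restrictability of the torsion pairs. The only cosmetic difference is that you verify condition (b) of Theorem \ref{Saorin}(3) (restrictability of the tilted $t$-structure plus Grothendieck heart, the latter via Theorem \ref{t-st comm}), whereas the paper verifies condition (c) by invoking Theorem \ref{thm restrictable} for the local coherence of the hearts; since (a), (b), (c) are equivalent, this is immaterial.
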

\begin{proof}
By Proposition \ref{iterated HRS}, we know that $\mathbb{T}_\varphi$ is obtained from the standard
$t$-structure by iterated HRS-tilts at hereditary torsion pairs of finite type. Moreover, by
Proposition \ref{restrictable iteration}, each of the $t$-structures involved in this process is
restrictable and, in particular, their hearts are locally coherent with finitely presented
objects given by the intersection with $\D^b(\mathsf{mod}(R))$, by Theorem \ref{thm restrictable}.
Finally, Theorem \ref{Saorin} completes the argument.
\end{proof}

\begin{thm}\label{der eq restrictable}
Let $R$ be a commutative noetherian ring and let $\mathbb{T}$ be an intermediate
restrictable compactly generated $t$-structure in $\D(R)$. Then $\mathbb{T}$ induces a derived equivalence.
\end{thm}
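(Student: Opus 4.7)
The plan is to realise $\mathbb{T}$ as the endpoint of a finite chain of HRS-tilts starting at the standard $t$-structure, and then to propagate the property of inducing a derived equivalence along this chain by repeated application of Corollary \ref{loc coh}.

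The first step is a harmless normalisation: up to shifting $\mathbb{T}$ (which preserves being intermediate, compactly generated, restrictable, and whether or not the realisation functor is a triangle equivalence), one may assume that the associated sp-filtration $\varphi$ satisfies $\varphi(1)=\emptyset$. Under this assumption, Proposition \ref{iterated HRS} provides a finite chain
\[\Dbb = \mathbb{T}_0,\ \mathbb{T}_1,\ \dots,\ \mathbb{T}_n = \mathbb{T}\]
in which each $\mathbb{T}_{k+1}$ is the HRS-tilt of $\mathbb{T}_k$ at a hereditary torsion pair of finite type $\mathbf{t}_k$ with specialisation-closed support. The successive intermediate sp-filtrations are (up to renumbering) the iterates $\varphi^{\langle n-k\rangle}$ of $\varphi$, so Proposition \ref{restrictable iteration} ensures that every $\mathbb{T}_k$ is itself a restrictable intermediate compactly generated $t$-structure, Theorem \ref{MZ} ensures that every heart $\Hcal_{\mathbb{T}_k}$ is a locally coherent Grothendieck category, and Corollary \ref{seq restrictable torsion pairs} ensures that every $\mathbf{t}_k$ is restrictable.

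The next step is an induction on $k$. The base case $\mathbb{T}_0 = \Dbb$ is immediate: the heart is $\Mod(R)$ and the bounded realisation functor is the canonical identification $\D^b(\Mod(R)) \simeq \D^b(R)$. For the inductive step, assuming that $\mathbb{T}_k$ induces a derived equivalence, all the hypotheses of Corollary \ref{loc coh} are in place for the pair $(\mathbb{T}_k, \mathbf{t}_k)$: $\mathbb{T}_k$ is intermediate compactly generated with a locally coherent heart, it induces a derived equivalence, and $\mathbf{t}_k$ is a restrictable hereditary torsion pair of finite type associated to a specialisation-closed subset of $\Spec(R)$. Corollary \ref{loc coh} then yields that $\mathbb{T}_{k+1}$ also induces a derived equivalence. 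After $n$ iterations, $\mathbb{T}=\mathbb{T}_n$ induces a bounded derived equivalence, and by Remark \ref{rmk:comm-noeth-unbounded} (via Theorem \ref{prop:bound2unbound}(2)) this automatically upgrades to an unbounded equivalence.

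The main obstacle is ensuring that both inductive invariants, namely local coherence of the heart and the property of inducing a derived equivalence, propagate simultaneously along the chain. The first is handled uniformly by Theorem \ref{MZ}, once restrictability of every $\mathbb{T}_k$ has been established via Proposition \ref{restrictable iteration}; the second is the content of Corollary \ref{loc coh}, whose proof ultimately reduces, through Theorem \ref{main der eq}, to a verification of the CHZ-criterion of Proposition \ref{CHZ generator} applied to a suitable set of generators of $\Hcal_{\mathbb{T}_k}$.
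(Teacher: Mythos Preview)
Your proof is correct and follows essentially the same route as the paper: normalise the sp-filtration by a shift, invoke Proposition~\ref{iterated HRS} to realise $\mathbb{T}$ as an iterated HRS-tilt from the standard $t$-structure, use Corollary~\ref{seq restrictable torsion pairs} (resting on Proposition~\ref{restrictable iteration} and Theorem~\ref{MZ}) to ensure each intermediate torsion pair is restrictable, and then apply Corollary~\ref{loc coh} inductively along the chain. The paper's proof is terser but the underlying argument is identical; your explicit articulation of the two inductive invariants (local coherence of the heart and the derived-equivalence property) is a helpful clarification of what the paper compresses into a single sentence.
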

\begin{proof}
Up to shifting $\Tbb$, we may assume that the the associated sp-filtration $\varphi$ has
$\varphi(1)=\emptyset$. Proposition~\ref{iterated HRS} then shows that $\Tbb$ can be obtained from
the standard $t$-structure via an
iterated HRS-tilting process involving hereditary torsion pairs of finite type. Moreover, Corollary \ref{seq restrictable
torsion pairs} shows that these torsion pairs are restrictable. Applying Corollary \ref{loc coh} to these tilts, we obtain a chain of triangle equivalences
linking $\D(\Hcal_\Tbb)$ and $\D(R)$, as wanted.
\end{proof}

\begin{cor}\label{bdd cosilting is cotilting}
Let $R$ be a commutative noetherian ring. Every bounded cosilting object of $\D(R)$ whose $t$-structure is restrictable is cotilting.
\end{cor}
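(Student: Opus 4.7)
The plan is to combine Theorem~\ref{der eq restrictable} with the already-recorded fact (used in the proof of Theorem~\ref{prop:bound2unbound}) that an intermediate cosilting $t$-structure inducing a bounded derived equivalence is automatically cotilting, via \cite[Corollary 5.2]{PV}. So the whole proof reduces to verifying that the $t$-structure $\mathbb{T}$ associated to a bounded cosilting object $C$ falls within the hypotheses of Theorem~\ref{der eq restrictable}: that is, intermediate, compactly generated, and restrictable.

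First I would argue that $\mathbb{T}$ is intermediate. Boundedness of $C$ means $C \in \D^b(R)$, so there are integers $a < b$ with $C \in \Dbb^{[a,b]}$; standard estimates on ${}^{\bot_{\leq 0}}C$ and ${}^{\bot_{>0}}C$ then sandwich the aisle of $\mathbb{T}$ between $\Dbb^{\leq a-1}$ and $\Dbb^{\leq b}$ (after suitable normalisation), making $\mathbb{T}$ intermediate in the sense of Definition~\ref{dfn:t-structure-properties}. Second, I would show that $\mathbb{T}$ is compactly generated. Since every cosilting $t$-structure in $\D(R)$ is nondegenerate and is cosilting for a pure-injective cosilting object (by the results cited from \cite{Ba0,Stov,MV}), Theorem~\ref{thm:$t$-structure-types} gives that $\mathbb{T}$ is smashing with Grothendieck heart. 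Now $R$ is commutative noetherian, so the equivalence (1)$\Leftrightarrow$(2) in Theorem~\ref{t-st comm} promotes this to compact generation.

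Third, restrictability is given by hypothesis, so Theorem~\ref{der eq restrictable} applies and yields that $\mathbb{T}$ induces a derived equivalence between $\Hcal_\Tbb$ and $\Mod(R)$. In particular $\realb_\Tbb$ is a triangle equivalence, so $\mathbb{T}$ induces a \emph{bounded} derived equivalence. Fourth, I would invoke \cite[Corollary 5.2]{PV} (exactly as recalled at the end of the proof of Theorem~\ref{prop:bound2unbound}): an intermediate cosilting $t$-structure that induces a bounded derived equivalence is cotilting. Since this is precisely the situation for $\mathbb{T}$, the cosilting object $C$ must in fact be cotilting, which is the desired conclusion.

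The main obstacle, as in many such arguments, is the structural lemma behind Step~2: reconciling the $t$-structure coming from an abstract bounded cosilting object with the compactly generated $t$-structures classified in Theorem~\ref{t-st comm}. The rest of the argument is essentially a bookkeeping chain through Theorems~\ref{thm:$t$-structure-types}, \ref{t-st comm}, \ref{der eq restrictable}, and the $PV$-criterion; the delicate input is that cosilting objects over commutative noetherian rings are pure-injective, so that Theorem~\ref{thm:$t$-structure-types}(3) is available and the Grothendieck-heart/smashing condition can be upgraded to compact generation.
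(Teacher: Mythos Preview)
Your proof is correct and follows essentially the same route as the paper's: show the $t$-structure is intermediate and compactly generated (via pure-injectivity of the cosilting object), apply Theorem~\ref{der eq restrictable}, and conclude via \cite[Corollary 5.2]{PV}. One minor point: the paper explicitly appeals to \cite[Proposition 3.10]{MV} to say that \emph{bounded} cosilting objects are pure-injective---this is not known for arbitrary cosilting objects, so your phrasing ``every cosilting $t$-structure'' is a slight overreach---and then cites \cite[Corollary 2.14]{HN} directly for compact generation rather than routing through Theorems~\ref{thm:$t$-structure-types} and~\ref{t-st comm} as you do.
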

\begin{proof}
	It follows from \cite[Proposition 3.10]{MV} that every bounded cosilting
object is pure-injective. The associated
$t$-structure is then compactly generated by \cite[Corollary 2.14]{HN}.
Since the complex is bounded, the associated
$t$-structure is an intermediate $t$-structure.
The result then follows from Theorem \ref{der eq
restrictable} and from the fact that a cosilting $t$-structure induces a derived equivalence if and
only if it is cotilting (\cite[Corollary 5.2]{PV}).
\end{proof}

Taking Theorem \ref{der eq restrictable} into account, the assumption that $\Tbb$ induces a derived equivalence in Corollary \ref{cor her ft in some hearts} is redundant, therefore leading to the following simplification. 

\begin{cor}\label{tp finite type restrictable}
Let $R$ be a commutative noetherian ring and $\mathbb{T}$ an intermediate restrictable compactly generated $t$-structure in $\D(R)$ with heart $\Hcal_\Tbb$. Then there is a bijection between hereditary torsion pairs of finite type in $\Hcal_\Tbb$ and specialisation-closed subsets of $\Spec(R)$.
\end{cor}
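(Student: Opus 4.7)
The plan is to reduce the statement immediately to Corollary \ref{cor her ft in some hearts}, whose hypotheses differ from those at hand only in the additional requirement that $\Tbb$ induce a derived equivalence. My first and only substantive step would be to observe that this extra requirement is now automatic: by Theorem \ref{der eq restrictable}, every intermediate restrictable compactly generated $t$-structure over a commutative noetherian ring induces a derived equivalence. Hence all the hypotheses of Corollary \ref{cor her ft in some hearts} hold for $\Tbb$, and invoking it yields the desired bijection.

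To confirm that no further verification is needed and to make the bijection explicit, I would record both directions. In one direction, a specialisation-closed subset $V\subseteq\Spec(R)$ is sent to the torsion pair $\mathbf{t}_V=(\Tcal_V,\Tcal_V^\perp)$ with $\Tcal_V=\supp^{-1}(V)\cap\Hcal_\Tbb$; item (1)(a) of Theorem \ref{classification of tp in heart} gives hereditarity, while item (1)(b) — where the derived equivalence furnished by Theorem \ref{der eq restrictable} is used — shows that $\mathbf{t}_V$ is of finite type and identifies $\Tcal_V^\perp$ as $\Cogen(\supp^{-1}(V^\co)\cap\Hcal_\Tbb)$. In the other direction, a hereditary torsion pair of finite type $(\Tcal,\Fcal)$ is sent to $\supp(\Tcal)$, which is specialisation-closed by item (2) of the same theorem, since $\Tbb$ is restrictable. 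The fact that the two assignments are mutually inverse follows from Proposition \ref{some properties}(3)(f), which states precisely that $\supp^{-1}(\supp(\Tcal))\cap\Hcal_\Tbb=\Tcal$ for any hereditary torsion class in the heart of a nondegenerate smashing $t$-structure.

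There is essentially no obstacle: the entire content of the corollary is that the restrictability hypothesis alone is strong enough, via Theorem \ref{der eq restrictable}, to force the derived equivalence condition appearing in Corollary \ref{cor her ft in some hearts}, thereby rendering that extra assumption redundant. The only point one needs to check is that the earlier corollary was indeed stated under the combined hypotheses, so that removing "induces a derived equivalence" truly does amount to a simplification rather than a strengthening.
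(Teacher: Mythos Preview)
Your proposal is correct and matches the paper's approach exactly: the paper simply remarks that, in light of Theorem \ref{der eq restrictable}, the hypothesis that $\Tbb$ induces a derived equivalence in Corollary \ref{cor her ft in some hearts} becomes redundant, and states the present corollary as the resulting simplification. Your additional unpacking of the two directions of the bijection via Theorem \ref{classification of tp in heart} and Proposition \ref{some properties}(3)(f) is accurate and merely makes explicit what is already implicit in the proof of Corollary \ref{cor her ft in some hearts}.
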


At this point, one might speculate whether every intermediate compactly generated $t$-structure
leads to a derived equivalence. We instead show an example of such a $t$-structure that
does not induce a derived equivalence. In other words, this provides (implicitly) an example of a
bounded (3-term) pure-injective cosilting complex which is not cotilting over a commutative
noetherian ring. Note that by Corollary \ref{2-term cosilting is cotilting} such an example cannot
be found among 2-term cosilting complexes.

\begin{ex}\label{ex:Z-secondtilt}
Recall the situation considered in Example \ref{ex:Z} and assume, furthermore, that $R$ is
connected, i.e. that it has no non-trivial idempotent elements. With the same notation,
$\Hcal_{\mathbf{t}}$ is the heart of the $t$-structure corresponding to the sp-filtration
$\Spec{R}\supseteq V\supseteq\emptyset$. By Lemma~\ref{generalisation-closed support} we know that the set $V$ also corresponds to a hereditary torsion pair (of finite type, by Theorem \ref{classification of tp in heart}) in $\Hcal_\mathbf{t}$, namely $\mathbf{s}=(\Tcal,\Ccal[1])$, where $\Ccal$ is the Giraud subcategory associated to $\Tcal$ in $\mathsf{Mod}(R)$. Consider the heart $\Hcal_\mathbf{s}$ of the HRS-tilt of the $t$-structure with heart $\Hcal_{\mathbf{t}}$ with respect to $\mathbf{s}$. The corresponding $t$-structure, by Lemma~\ref{tilt between cg} is associated to the intermediate sp-filtration $\Spec(R)\supseteq V\supseteq V\supseteq \emptyset$. Notice that this filtration does not satisfy the weak Cousin condition and, hence, this $t$-structure is not restrictable.

By construction, we have $\Hcal_\mathbf{s}=\Ccal[2]\ast\Tcal$. Notice that since
$\mathbf{t}$ is perfect, for all objects $T$ in $\Tcal$ and $C$ in $\Ccal$ we have
$\Hom_{\D(R)}(T,C[3])\simeq\Ext_R^3(T,C)=0$ and hence all triangles
\[C[2]\longrightarrow X\longrightarrow T\longrightarrow C[3]\]
split. In other words, the torsion pair $(\Ccal[2],\Tcal)$ in $\Hcal_\mathbf{s}$ is a split torsion
pair. Moreover, the same argument shows that $\Hom_{\D(R)}(T,C[2])\simeq\Ext_R^2(T,C)=0$ and, thus, we
have that in fact also $(\Tcal,\Ccal[2])$ is a torsion pair in $\Hcal_\mathbf{s}$. In other words,
$\Ccal[2]$ and $\Tcal$ are abelian subcategories of $\Hcal_\mathbf{s}$ and $\Hcal_\mathbf{s}\cong
\Ccal[2]\times \Tcal$.
	
Now, since $R$ is connected, it follows that $\D(R)$ is an indecomposable triangulated category, i.e. it is not the product of two triangulated subcategories (see \cite[Example 3.2]{Bri0}). However, it is clear that $\D(\Hcal_\mathbf{s})$ is not indecomposable, as it is equivalent to the product $\D(\Ccal[2])\times \D(\Tcal)$. As a consequence, $\Hcal_\mathbf{s}$ cannot be derived equivalent to $\Mod(R)$. Note that, in particular, this provides an example of a cosilting (3-term) object of $\D(R)$ which is not cotilting.
\end{ex}

We conclude the paper exploring some consequences for the hearts of $t$-structures of
$\D^b(\mod(R))$.

\begin{prop}
Let $R$ be a commutative noetherian ring, and $\mathbb{B}$ a bounded $t$-structure of
$\D^b(\mod(R))$, with heart $\Bcal$. Then $\Bcal$ is the category of finitely presented
objects of a locally coherent Grothendieck category which is derived equivalent to $\Mod(R)$. Moreover, Serre subcategories of $\Bcal$ are in bijection with specialisation-closed subsets of $\Spec(R)$.
\end{prop}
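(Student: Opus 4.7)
The plan is to extend $\mathbb{B}$ to a restrictable intermediate compactly generated $t$-structure $\mathbb{T}=\Theta(\mathbb{B})$ in $\D(R)$ via Theorem~\ref{thm restrictable}, deduce the derived equivalence from Theorem~\ref{der eq restrictable}, and use the classical Gabriel correspondence together with Corollary~\ref{tp finite type restrictable} for the bijection of Serre subcategories. Theorem~\ref{thm restrictable} already provides that $\Hcal_\Tbb$ is locally coherent Grothendieck and that $\fp(\Hcal_\Tbb)=\Bcal$, so the first assertion follows as soon as we verify that $\mathbb{T}$ induces a derived equivalence.

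The central step is to check that $\mathbb{T}$ is intermediate, which is precisely the hypothesis needed to apply Theorem~\ref{der eq restrictable}. Let $\varphi$ be the sp-filtration associated to $\mathbb{T}$. Boundedness of $\mathbb{B}$ applied to $R$ produces an integer $n$ with $R[n]\in\Xcal\subseteq\Ucal_\Tbb$, and the cohomological description of $\Ucal_\Tbb$ in Theorem~\ref{t-st comm} forces $\varphi(-n)=\Spec(R)$, giving $\Dbb^{\leq -n}\subseteq\Ucal_\Tbb$. For the reverse bound $\Ucal_\Tbb\subseteq\Dbb^{\leq N}$, equivalently $\varphi(N+1)=\emptyset$, I would apply boundedness of $\mathbb{B}$ to $R/\mathfrak{p}$ for each prime $\mathfrak{p}$: using the characterisation of $\Vcal_\Tbb$ via local cohomology, the condition $R/\mathfrak{p}\in\Vcal_\Tbb[m_\mathfrak{p}]$ constrains the levels $k$ at which $\mathfrak{p}\in\varphi(k)$ to a finite range depending on $\mathfrak{p}$. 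Combining this with the weak Cousin condition from Theorem~\ref{thm restrictable}(2), the finiteness of the set of minimal primes of the noetherian ring $R$, and the fact that $\Spec(R)$ is a noetherian space so every non-empty specialisation-closed subset contains a maximal ideal, one upgrades these pointwise bounds to a uniform $N$ with $\varphi(N+1)=\emptyset$.

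With $\mathbb{T}$ intermediate, restrictable and compactly generated, Theorem~\ref{der eq restrictable} yields that $\mathbb{T}$ induces a derived equivalence, so $\Hcal_\Tbb$ is derived equivalent to $\Mod(R)$, proving the first assertion. For the second, I would invoke the classical Gabriel-type correspondence in the locally coherent Grothendieck category $\Hcal_\Tbb$: Serre subcategories of $\Bcal=\fp(\Hcal_\Tbb)$ are in bijection with hereditary torsion pairs of finite type in $\Hcal_\Tbb$ via $\Scal\mapsto\varinjlim\Scal$, with inverse $\Tcal\mapsto\Tcal\cap\Bcal$. Composing with the bijection of Corollary~\ref{tp finite type restrictable} between hereditary torsion pairs of finite type in $\Hcal_\Tbb$ and specialisation-closed subsets of $\Spec(R)$ yields the desired classification. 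The main obstacle lies in the intermediacy step: the lower bound on $\varphi$ is immediate, whereas producing the uniform upper bound genuinely requires delicately combining the weak Cousin condition with boundedness applied to $R/\mathfrak{p}$ for non-maximal primes, and is the only subtlety that becomes most acute when $R$ has infinite Krull dimension.
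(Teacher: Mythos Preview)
Your proposal is correct and follows the same route as the paper: lift $\mathbb{B}$ to $\Theta(\mathbb{B})$, apply Theorem~\ref{der eq restrictable} for the derived equivalence, and then compose the Herzog--Krause correspondence between Serre subcategories of $\fp(\Hcal_\Tbb)=\Bcal$ and hereditary torsion pairs of finite type in $\Hcal_\Tbb$ with Corollary~\ref{tp finite type restrictable}. The only difference is emphasis: the paper disposes of intermediacy of $\Theta(\mathbb{B})$ in a single clause (implicitly relying on \cite{AJS}) and cites Theorem~\ref{MZ} for local coherence, whereas you expand the intermediacy check via the weak Cousin condition---a legitimate but more elaborate justification of a point the paper treats as known.
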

\begin{proof}
Consider the compactly generated $t$-structure $\Theta(\mathbb{B})$. It is intermediate because so
is $\mathbb{B}$, and it is restrictable by construction. Hence, its heart $\Hcal_\Tbb$ is derived
equivalent to $\Mod(R)$, by Theorem \ref{der eq restrictable}. Now, by Theorem \ref{MZ}, $\Hcal_\Tbb$ is a 
locally coherent Grothendieck category with $\fp(\Hcal_\Tbb)=\Hcal_\Tbb\cap\D^b(\mod(R))=\Bcal$.
Finally, since Serre subcategories of $\Bcal$ are in bijection with hereditary
torsion pairs of $\Hcal_\Tbb$ (see \cite{H,Kr}), and therefore with specialisation closed subsets of
$\Spec(R)$ by Corollary \ref{tp finite type restrictable}.
\end{proof}

\end{document}